\documentclass{siamart1116}
\numberwithin{theorem}{section}



\usepackage{lipsum}
\usepackage{amsfonts}
\usepackage{graphicx}
\usepackage{epstopdf}
\usepackage{algorithmic}

\usepackage[T1]{fontenc}
\usepackage[UKenglish]{babel}%
\usepackage{amssymb, amsmath}
\usepackage{lmodern}
\usepackage{dsfont}
\usepackage{enumerate}
\usepackage[all]{xy}
\usepackage{mathtools}
\usepackage{appendix}
\usepackage{pgf,tikz}
\usepackage{tikz-cd}
\usetikzlibrary{arrows}
\usepackage{centernot}
\usepackage{verbatim}
\usepackage[normalem]{ulem}

\ifpdf
  \DeclareGraphicsExtensions{.eps,.pdf,.png,.jpg}
\else
  \DeclareGraphicsExtensions{.eps}
\fi


\usepackage{amsopn}

\newcommand{\sep}{,\,}

\DeclareFontFamily{U}{mathx}{\hyphenchar\font45}
\DeclareFontShape{U}{mathx}{m}{n}{
      <5> <6> <7> <8> <9> <10>
      <10.95> <12> <14.4> <17.28> <20.74> <24.88>
      mathx10
      }{}
\DeclareSymbolFont{mathx}{U}{mathx}{m}{n}
\DeclareFontSubstitution{U}{mathx}{m}{n}
\DeclareMathAccent{\widecheck}{0}{mathx}{"71}
\DeclareMathAccent{\wideparen}{0}{mathx}{"75}

\usepackage{mathrsfs}  

\newcommand{\grad}{\nabla}
\newcommand{\vc}[1]{\mathbf{#1}}

\newcommand{\N}{\mathbb{N}}
\newcommand{\R}{\mathbb{R}}

\newcommand{\NB}{\mathcal{H}}

\renewcommand{\S}{\mathbb{S}}

\newcommand{\ps}[2]{\left\langle #1 , #2 \right\rangle}
\newcommand{\norm}[1]{\| #1 \|}
\newcommand{\nnorm}[1]{{\vert\kern-0.25ex\vert\kern-0.25ex\vert #1 
\vert\kern-0.25ex\vert\kern-0.25ex\vert}}
\renewcommand{\bar}[1]{\overline{#1}}

\renewcommand{\epsilon}{\varepsilon}

\newcommand{\bal}{\boldsymbol{\alpha}}

\newcommand{\bbe}{\boldsymbol{\beta}}

\newcommand{\bt}{\boldsymbol{\theta}}

\newcommand{\cwl}{\widehat{\operatorname{cw}}_{\bb}}
\newcommand{\cwu}{\widecheck{\operatorname{cw}}_{\bb}}

\renewcommand{\O}{\mathcal O}
\newcommand{\Dn}{\Delta_{++}^d}

\newcommand{\bxi}{\boldsymbol{\xi}}
\newcommand{\blam}{\boldsymbol{\lambda}}

\newcommand{\ba}{\vc a}
\newcommand{\bb}{\vc b}

\newcommand{\be}{\vc e}

\newcommand{\bx}{\vc x}
\newcommand{\bxb}{\vc x}

\newcommand{\by}{\vc y}
\newcommand{\byb}{\vc y}

\newcommand{\bz}{\vc z}
\newcommand{\bzb}{\vc z}

\newcommand{\bv}{\vc v}
\newcommand{\bvb}{\vc v}

\newcommand{\bu}{\vc u}
\newcommand{\bub}{\vc u}

\newcommand{\bw}{\vc w}
\newcommand{\bwb}{\vc w}

\newcommand{\bs}{\vc s}

\newcommand{\s}{s}

\newcommand{\A}{\mathcal{A}}

\newcommand{\Bmin}[2]{\text{\delimitershortfall=0pt
\delimiterfactor=1001 $\mathbf{\mathfrak{m}}\!\left(#1\middle/#2\right)$}}
\newcommand{\Bmax}[2]{\text{\delimitershortfall=0pt
\delimiterfactor=1001 $\mathfrak{M}\!\left(#1\middle/#2\right)$}}

\newcommand{\bmini}[3]{\text{\delimitershortfall=0pt
\delimiterfactor=1001 $\mathfrak{m}_{#1}\!\left(#2\middle/#3\right)$}}
\newcommand{\bmaxi}[3]{\text{\delimitershortfall=0pt
\delimiterfactor=1001 $\mathfrak{M}_{#1}\!\left(#2\middle/#3\right)$}}

\newcommand{\and}{\quad\text{and}\quad}
\newcommand{\andd}{\qquad\text{and}\qquad}

\newcommand{\E}{\mathcal{E}}

\newcommand{\kone}{\mathcal{K}}

\newcommand{\lek}{\leq_{\kone}}
\newcommand{\lekk}{\lneq_{\kone}}
\newcommand{\lekkk}{<_{\kone}}

\newcommand{\G}{\mathcal{G}}

\newcommand{\krog}{\otimes}

\newcommand{\J}{\mathcal J}

\newcommand{\I}{\mathcal I}
\newcommand{\saufzero}{\setminus\{0\}}

\renewcommand{\t}{\tilde}

\newcommand{\multihomo}{multi-homogeneous}

\newcommand{\NF}{G}

\newcommand{\ones}{\mathbf{1}}

\newcommand{\sauf}{\setminus}

\newcommand{\bphi}{\boldsymbol{\phi}}
\newcommand{\bphib}{\boldsymbol{\phi}}
\newcommand{\Sn}{\S}

\newcommand{\pmi}[1]{\bx^{#1}}

\usepackage{soul} 

\def\red#1{\color{red}#1 \color{black}}

\newsiamthm{thm}{Theorem}
\newsiamthm{lem}{Lemma}
\newsiamthm{prop}{Proposition}
\newsiamthm{cor}{Corollary}
\newsiamremark{rmq}{Remark}
\newsiamremark{ex}{Example}
\newsiamremark{defi}{Definition}
\newsiamremark{ass}{Assumption}


\newcommand{\TheTitle}{The Perron-Frobenius Theorem for Multi-homogeneous Mappings} 
\newcommand{\TheAuthors}{A. Gautier, F. Tudisco, and M. Hein}

\headers{Multi-homogeneous Perron-Frobenius theorem}{\TheAuthors}

\title{{\TheTitle}\thanks{Author's accepted version: this is the peer-reviewed version of this manuscript, which is now   published on SIAM Journal on Matrix Analysis and Applications \url{https://doi.org/10.1137/18M1165037}.
\funding{This work has been funded by the ERC starting grant ``NOLEPRO'', no.\ 307793. The work of F.T. was funded by the European Union's Horizon 2020 research and innovation programme under the MarieSk\l odowska-Curie individual fellowship ``MAGNET'' grant agreement no.\ 744014.}}}

\author{
  Antoine Gautier\thanks{Department of Mathematics and Computer Science, Saarland University, 66041 Saarbr\"{u}cken, Germany
    (\email{ag@cs.uni-saarland.de},\email{hein@math.uni-sb.de}).}
  \and
 Francesco Tudisco\thanks{Department of Mathematics and Statistics, University of Strathclyde, G11XH Glasgow, UK (\email{f.tudisco@strath.ac.uk}).}
  \and
 Matthias Hein\footnotemark[2]
}




\newcommand{\Sphi}{\S^{\bphib}}
\newcommand{\bi}{\mathbf{i}}

\usepackage{mathtools}

{\color{blue} }%
{\color{black} }
\usepackage{soul}

\begin{document}

\maketitle

\begin{abstract}
  The Perron-Frobenius theory for nonnegative matrices has been generalized to order-preserving homogeneous mappings on a cone and more recently to nonnegative multilinear forms. 
   We unify both approaches by introducing the concept of order-preserving multi-homogeneous mappings, their associated nonlinear spectral problems and spectral radii. We show several Perron-Frobenius type results for these mappings addressing existence, uniqueness and maximality of nonnegative and positive eigenpairs. We prove a Collatz-Wielandt principle and other characterizations of the spectral radius and analyze the convergence of iterates of these mappings towards their unique positive eigenvectors. On top of providing a new extension of the nonlinear Perron-Frobenius theory to the multi-dimensional case, our contribution poses the basis for several improvements and a deeper understanding of the current spectral theory for nonnegative tensors. In fact, in recent years, important results have been obtained by recasting certain spectral equations for multilinear forms in terms of homogeneous maps, however as our approach is more adapted to such problems, these results can be further refined and improved by employing our new multi-homogeneous setting. 
\end{abstract}

\begin{keywords}
Perron-Frobenius theorem\sep nonlinear power method\sep nonlinear eigenvalue\sep nonlinear singular value\sep Collatz-Wielandt principle\sep Hilbert projective metric 
\end{keywords}

\begin{AMS}
 47H07\sep 
 47J10\sep 
 15B48\sep 
 47H09\sep 
 47H10 
\end{AMS}

\section{Introduction}
The classical Perron-Frobenius theory addresses properties such as existence, uniqueness and maximality of eigenvectors and eigenvalues of matrices with nonnegative entries. Two important generalizations of this theory arise in the study of eigenvectors of order-preserving homogeneous mappings defined on cones and in multilinear algebra where spectral problems involving nonnegative tensors are considered. In this work we consider a framework allowing the unified study of both directions by introducing the concept of order-preserving multi-homogeneous mappings. While some multi-homogeneous spectral problems can be reformulated in terms of standard homogeneous maps (see e.g. \cite{Fried}), the novel multi-homogeneous formulation allows us to go further and prove several results that either hold for a larger class of problems or that require weaker assumptions. In particular, we provide a notion of eigenvalue and spectral radius for multi-homogeneous 
mappings and prove several Perron-Frobenius type results. These results include the existence of a nonnegative eigenvector corresponding to the spectral radius, the existence and uniqueness of a positive maximal eigenvector, and a Collatz-Wielandt characterization of the spectral radius. Furthermore, we investigate the simplicity of the spectral radius and the convergence of the iterates of the mapping towards its unique positive eigenvector. The latter result  is particularly relevant from a computational viewpoint as it naturally gives rise to an efficient and general algorithm for the computation of the positive eigenvector, with a linear convergence rate. 


On the one side linear algebra can be seen as a special case of multilinear algebra, on the other side eigenvectors and eigenvalues of nonnegative matrices are a special case of those of order-preserving homogeneous mappings on $\R^n_+=\{\bu\in\R^n\colon u_i\geq 0, \ \forall i \}$. Following a similar analogy, the nonlinear Perron-Frobenius theory for homogeneous mappings is a special case of that for multi-homogeneous mappings and the study of spectral problems induced by nonnegative multi-linear forms is a special case of the study of eigenvectors of order-preserving multi-homogeneous mappings on the product $\R^{n_1}_+\times \ldots \times \R^{n_d}_+$. Therefore, when $d=1$, our results reduce to their counterparts in the existing linear and nonlinear finite dimensional Perron-Frobenius theories. However, when $d>1$, the use of the proposed multi-homogeneous setting allows us to improve and unify many results and definitions in the study of spectral problems induced by nonnegative multi-linear forms, as for instance the   $\ell^p$-eigenvector problem for (square) nonnegative tensors, the $\ell^{p,q}$-singular vector problem for nonnegative (rectangular) tensors and the $\ell^{p,q,r}$-singular vector problem for nonnegative tensors \cite{us,Yang2,Yang1}. In \cite{tensorpaper} we discuss several of these implications in detail.

In recent years, the nonlinear Perron-Frobenius theory and the spectral theory of nonnegative multi-linear forms have been successfully employed in a variety of applications ranging from signal processing \cite{Pillai2005ThePT} to low rank approximation \cite{Quynhn}, mathematical economics \cite{Gaubert} and dynamical systems \cite{7039628}. The use of the multi-homogeneous framework opens the avenue to several challenging applications too. For instance, the techniques proposed in this paper have recently inspired the first practicable algorithm for the training of a class of generalized polynomial neural networks to global optimality \cite{gautier2016globally}, and have been employed in network science in order to extend eigenvector-based centrality measures to multi-dimensional graphs \cite{frafra}.

The nonlinear Perron-Frobenius theory has been developed for order-preserving mappings on general cones. However, for the sake of simplicity and in order to make our ideas more transparent, we restrict ourself to cones of the form $\R^n_+$ and their Cartesian product. Nevertheless, we took special care to use as little as possible the particular structure of $\R^n_+$ in order to facilitate subsequent generalizations of our results to general cones. 

The paper is organized as follows: In Section \ref{mon}, we introduce and motivate the class of order-preserving multi-homogeneous mappings. We propose a way to define eigenvectors and eigenvalues for multi-homogeneous mappings.
Furthermore, we discuss characteristics of these mappings.
In Section \ref{contrasec}, we prove a contraction principle for our class of mappings in Theorem \ref{Banachcombi}. In particular, this theorem implies the existence and uniqueness of a positive eigenvector under very mild conditions. In Section \ref{existence_section}, we propose a generalized notion of spectral radius and prove, in Theorem \ref{weak_PF}, a weak form of the Perron-Frobenius theorem which implies the existence of a nonnegative eigenvector corresponding to the spectral radius. Then, we discuss a generalized notion of irreducibility allowing us to give, in Theorem \ref{existence_intro}, a sufficient condition for the existence of a positive eigenvector of nonexpansive mappings. In Section \ref{CW_M_U}, we prove a Collatz-Wielandt formula for the spectral radius (Theorem \ref{CW_intro}) and discuss the simplicity and the uniqueness of positive eigenvectors and their associated eigenvalues (Theorem \ref{simplicity}). Finally, in Section \ref{PM_section}, we discuss a method for computing the positive eigenvector of order-preserving multi-homogeneous mappings. The convergence of this method (with a linear rate) is discussed in Theorem \ref{PM}. For the sake of readability, in Sections  \ref{contrasec}\,--\,\ref{PM_section} we first state and discuss the main results and then we proceed with the proofs. For brevity, we shall  prove  only the results whose generalization from the homogeneous case is not straightforward.

\section{Motivation, overview and notation}\label{mon}
In this section we define and motivate the class of \textit{multi-homogeneous} mappings considered in this paper. We also introduce most  of the relevant notation that will be used throughout and we discuss a number of relatively simple but useful preliminary observations and results.

\subsection{Multi-homogeneous mappings}
First, let us introduce the class of mappings $\NB^d$. To this end, let  $n_1,\ldots,n_d$ be positive integers and consider the product space $V=\R^{n_1}\times\dots\times\R^{n_d}$. Within $V$, consider the product cone $\kone_{+}=\R^{n_1}_+\times \ldots \times \R^{n_d}_+$.
Let $F_i\colon \kone_+\to\R^{n_i}_+$, $i=1,\dots,d$, be continuous mappings and define $F\colon \kone_+ \to \kone_+$ as $F(\bx)=(F_1(\bx),\ldots,F_d(\bx))$. We say that $F$ is (positively) multi-homogeneous if there exists a $d\times d$ nonnegative matrix $A$ such that for every $\bx_j\in \R_+^{n_j}$ and every $\alpha_j\geq 0$, $j=1,\dots,d$, it holds
\begin{equation}\label{multihomodef}
F_i(\alpha_1\bx_1,\ldots,\alpha_d\bx_d)=\bigg(\prod_{j=1}^d \alpha_j^{A_{i,j}} \bigg)  \, F_i(\bx_1,\ldots,\bx_d)\qquad \forall i \in [d],
\end{equation}
where for an integer $n$ we let $[n]=\{1,\dots,n\}$. 

We refer to $A$ as the homogeneity matrix of $F$. 
When $d=1$, multi-homogeneous mappings boil down to standard homogeneous maps. To emphasize this property, it is useful to introduce the following matrix-power operation. 
For $\bal\in\R^n_{+}$ and a nonnegative matrix $A\in\R^{n\times n}$ define the vector  $\bal^A\in\R^n_+$ as
\begin{equation}\label{eq:a^A}
\bal^{A}= \Big(\prod_{k=1}^n\alpha_k^{A_{1,k}},\ldots,\prod_{k=1}^n\alpha_k^{A_{n,k}}\Big).
\end{equation}
With this notation,  \eqref{multihomodef} can be compactly written as
\begin{equation}\label{multihomodef_2}
F(\bal \otimes \bx) = \bal^A \otimes F(\bx)
\end{equation}
where, for $\bal\in \R^d$ and $\bx \in V$,  $\bal \krog \bx$ denotes the vector $\bal \krog \bx = (\alpha_1\bx_1,\ldots,\alpha_d\bx_d) \in V$. It is now immediate to note that, when $d=1$ and $A=p\in\R^{1\times 1}$, \eqref{multihomodef_2} boils down to $F(\alpha \bx) = \alpha^pF(\bx)$  which shows that every $p$-homogeneous mapping $F\colon \R^{n_1}\to \R^{n_1}$ is multi-homogeneous with homogeneity matrix $p\in \R$.

On $V$ we consider the partial ordering induced by $\kone_+$. If $\kone_{++}$ denotes the interior of $\kone_+$, we write $\bx\lek\bu$, $\bx\lekk \bu$, $\bx\lekkk\bu$ if $\bu-\bx\in\kone_+$, $\bu-\bx\in\kone_{+}\saufzero$ and $\bu-\bx\in\kone_{++}$, respectively. 
A multi-homogeneous mapping $F$ is said to be order-preserving if it preserves such ordering, that is for any $\bx,\by\in \kone_+$ we have 
\begin{equation}\label{orderpresdef}
 \bx \leq_\kone \by \quad \Longrightarrow \quad F(\bx)\leq_\kone F(\by)\, .
\end{equation}

Finally, we say that $F$ is nondegenerate if 
\begin{equation}\label{nondegdef}
F(\kone_{++})\subset \kone_{++} \qquad \text{and}\qquad A\R^d_{++}\subset \R^d_{++}
\end{equation}
where $A$ is the homogeneity matrix of $F$ and $\R^d_{++}$ is the set of positive vectors in $\R^d$ (the interior of $\R^d_+$). 
Overall, we define
\begin{defi}\label{defNBd}
Let $\NB^d$ denote the set of  multi-homogeneous order-preserving nondegenerate
mappings on $\kone_+$,  i.e. 
\begin{equation*}
\NB^d=\big\{ F\colon\kone_+\to \kone_+\ \big|\ F \text{ is continuous and satisfies } \eqref{multihomodef},\eqref{orderpresdef}, \eqref{nondegdef}\big\}.
\end{equation*}
For $F\in\NB^d$, we write $\A(F)$ to denote its homogeneity matrix $A$, as defined in \eqref{multihomodef_2}. 
\end{defi}

As discussed in the preface of \cite{NB}, the development of the nonlinear Perron-Frobenius theory strongly relies on the use of the Hilbert's projective metric combined with results from fixed point theory. In fact, for example, the following observation holds in the linear case: Let $M\in\R^{n\times n}$ be a matrix with $M\R^n_{++}\subset\R^n_{++}$, then
\begin{equation}\label{linear_birk} 
\mu(M\bx,M\by) \leq \mu(\bx,\by) \qquad \forall \bx,\by\in\R^n_{++},
\end{equation}
where $\mu\colon \R^n_{++}\times \R^n_{++}\to \R_+$ is the Hilbert metric defined as
\begin{equation}\label{eq:standard_H_metric}
\mu(\bx,\by) =\ln\Big(\max_{i,j\in[n]} \frac{x_i}{y_i}\frac{y_j}{x_j}\Big),
\end{equation}
In particular, it is known that for any norm $\norm{\cdot}$ on $\R^n$, the pair $\big(\{\bx\in\R^n_{++}\colon \norm{\bx}=1\},\mu\big)$ forms a complete metric space (see for instance Proposition 4.4 in \cite{Troyanov}) and so one can use results of fixed point theory to analyze the eigenvectors of $M$. This observation can be extended to a wider class of mappings, namely the class of mappings $F\colon\R^n_+\to\R^n_+$ which are positively $p$-homogeneous, order-preserving and leave $\R^n_{++}$ invariant. 
In fact, for this type of mappings it can then be shown that $p$ is a Lipschitz constant of $F$ with respect to $\mu$ (see for instance Theorem 3.1 in \cite{Bus73}). As we will show in Lemma \ref{contract}, a key property  of the Perron-Frobenius theory for order-preserving multi-homogeneous mappings is that this property can be generalized to mappings in $\NB^d$. More precisely, if $F\in\NB^d$, $A=\A(F)$ and there exists a positive eigenvector $\bb\in\R^d_{++}$ of $A^\top$, then 
\begin{equation}\label{Lipschitz}
\mu_{\bb}\big(F(\bx),F(\by)\big)\,\leq\, \rho(A)\,\mu_{\bb}(\bx,\by) \qquad \forall \bx,\by\in\kone_{++},
\end{equation}
where $\rho(A)$ is the spectral radius of $A$ and $\mu_{\bb}\colon\kone_{++}\times \kone_{++}\to \R_+$ is the weighted product metric defined as
\begin{equation*}
\mu_{\bb}\big((\bx_1,\ldots,\bx_d),(\by_1,\ldots,\by_d)\big)=\frac{1}{\|\bb\|_1}\sum_{i=1}^db_i\mu(\bx_i,\by_i).
\end{equation*}
Clearly, $\rho(A)=1$ if $F$ is linear and thus \eqref{linear_birk} is a special case of \eqref{Lipschitz}.

One may wonder why we do not identify $\kone_+$ with $\R^{n_1+\dots+n_d}_+$ and then consider the Hilbert metric on  $\R^{n_1+\dots+n_d}_{++}$ for the study of mappings in $\NB^d$. This is because,  as we will observe in Example \ref{tightmot}, there exist mappings $F\in \NB^d$ that are nonexpansive with respect to the weighted Hilbert and Thompson metrics, even though $F^k$ is expansive with respect to the Hilbert and Thompson metrics on  $\R^{n_1+\dots+n_d}_{++}$, for all $k\geq 1$.

Another example is given by the singular value problem for nonnegative tensors, considered for example in \cite{Fried} and \cite{us}. While the analysis carried out in those papers is based on a spectral problem for an order-preserving $1$-homogeneous mapping, as observed in \cite{tensorpaper}, spectral problems for tensors are naturally multi-homogeneous and the assumptions required by transforming them into a  homogeneous setting ($d=1$) are much more restrictive than the ones one gets by treating the problem in its original multi-homogeneous formulation. We refer to \cite{tensorpaper} for a detailed analysis of multi-homogeneous mappings associated with tensor spectral problems.

As, the Perron-Frobenius theorem is concerned with eigenvectors and eigenvalues, we propose a generalization of these objects in the context of multi-homogeneous mappings: 
\begin{defi}\label{eigenvectordef}
Let $F=(F_1,\ldots,F_d)\in\NB^d$. We say that $\bx=(\bx_1,\ldots,\bx_d)\in\kone_+$ is an eigenvector of $F$ if $\bx_i\neq 0$ for every $i=1,\ldots,d$ and there exists $\blam\in\R^d_+$ such that $F(\bx) = \blam \otimes \bx$, i.e. $F(\bx) = (\lambda_1 \bx_1,\ldots,\lambda_d\bx_d)$. The vector $\blam$ is an eigenvalue of $F$ corresponding to $\bx$. 
\end{defi}
We conclude the section with a few simple examples of multi-homogeneous maps. Let $M\in\R^{n\times n}$ be a matrix with positive entries. 
\begin{ex}Define $F\colon\R^n_+\to\R^n_+$ as $F(\bx)=M\bx$. Then, we have $F\in\NB^1$ with $\A(F)=1$ and the eigenvectors of $F$ are the nonnegative eigenvectors of $M$. 
\end{ex}
\begin{ex}
Define $G\colon\R^{m}_+\times \R_+^n\to\R_+^m\times \R^n_+$ as $G(\bx,\by)=(M^{\top}\by,M\bx)$. Then, we have $G\in\NB^2$ with \begin{equation}\label{sd13sd231}\A(G)=\begin{pmatrix}0 & 1 \\ 1 & 0 \end{pmatrix}\end{equation} and the eigenvectors of $G$ in $\S= \{(\bx,\by)\mid \norm{\bx}_2=\norm{\by}_q=2\}$ are the nonnegative singular vectors of $M$.
\end{ex}
\begin{ex}
Let $G\in \NB^2$ be as in the previous example and define $H\colon\R^{m}_+\times \R_+^n\to\R_+^m\times \R^n_+$ as 
$H(\bx,\by)=\big((M^{\top}\by)^{1/(p-1)},(M\bx)^{1/(q-1)})$ where the powers are taken component-wise and $p,q>1$. Then, $H\in\NB^2$ with $\A(H)=\begin{psmallmatrix}\frac{1}{p-1} & 0\\ 0 & \frac{1}{q-1}\end{psmallmatrix}\A(G)$, where $\A(G)$ is as in \eqref{sd13sd231} and the eigenvectors of $H$ in $\S= \{(\bx,\by)\mid \norm{\bx}_p=\norm{\by}_q=1\}$ are the so-called nonnegative $\ell^{p,q}$-singulars vectors of $M$ \cite{Boyd}.
\end{ex}

\subsection{Notation}\label{notation}
In this paper we use the following notation: We use bold letters without index to denote  elements of $V$, bold letters with index $i\in [d]$ denote vectors in $\R^{n_i}$, whereas components of $\bx_i$ are written in normal font. Namely \begin{equation*}
\bx = (\bx_1,\ldots,\bx_d)\in V, \qquad \bx_i=(x_{i,1},\ldots,x_{i,n_i})\in \R^{n_i} \andd x_{i,j_i} \in \R.\end{equation*}

A similar notation is used for mappings $F\colon V\to V$. Namely we let $F=(F_1,\ldots,F_d)$ and $F_i=(F_{i,1},\ldots,F_{i,n_i})$ with $F_i\colon V\to\R^{n_i}$ and $F_{i,j_i}\colon V\to\R$.
Moreover, in order to index the entries of the vectors in $V$ in a more compact way, we consider the following sets of indices
\begin{equation*}
\I = \cup_{i=1}^d\{i\}\times [n_i], \qquad \J = [n_1]\times [n_2]\times\ldots \times [n_d].
\end{equation*}

We will assume each  $\R^{n_i}$ to be equipped with a norm $\|\cdot\|_{\gamma_i}$. For simplicity, we will always assume that the norms $\norm{\cdot}_{\gamma_1},\ldots,\norm{\cdot}_{\gamma_d}$ are monotonic, meaning that $\norm{\bx_i}_{\gamma_i} \leq \norm{\by_i}_{\gamma_i}$ whenever $|\bx_i|\leq |\by_i|$, where the absolute value is taken entrywise. For example, $\|\cdot \|_{\gamma_i}$ can be the Euclidean or any $\ell^p$ norm. Note that here, and in the rest of the paper, $|\bx|$ denotes the componentwise absolute value of $\bx$.  

Given $d$ such norms, we consider the following unit sphere on $V$
\begin{equation*}
\Sn=\big\{\bxb\in V \, : \,   \norm{\bx_i}_{\gamma_i}=1,\,  \forall i \in[d]\big\}, 
\end{equation*}
and we let $\Sn_+ = \Sn\cap \kone_+$ and $\Sn_{++}= \Sn \cap\kone_{++}$.

We will often use the two mappings $\mathfrak M, \mathfrak m\colon\kone_{++}\times\kone_{++}\to\R^d_{++}$,  defined as 
\begin{align*}
\Bmax{\bxb}{\byb}&=\big(\bmaxi{1}{\bxb}{\byb},\ldots,\bmaxi{d}{\bxb}{\byb}\big) =\bigg(\max_{j_1\in[n_1]}\frac{x_{1,j_1}}{y_{1,j_1}} ,\ldots,\max_{j_d\in[n_d]}\frac{x_{d,j_d}}{y_{d,j_d}}\bigg),\\
\Bmin{\bxb}{\byb}&=\big(\bmini{1}{\bxb}{\byb},\ldots,\bmini{d}{\bxb}{\byb}\big) =\bigg(\min_{j_1\in[n_1]}\frac{x_{1,j_1}}{y_{1,j_1}} ,\ldots,\min_{j_d\in[n_d]}\frac{x_{d,j_d}}{y_{d,j_d}}\bigg),
\end{align*}
for every $\bxb,\byb\in\kone_{++}$. Note that, as $\kone_{+,0}$ is closed,  we have 
\begin{equation*}
\Bmin{\bxb}{\byb}\krog\byb\ \lek \ \bxb \ \lek \ \Bmax{\bxb}{\byb}\krog\byb \qquad \forall \bxb,\byb\in\kone_{++}.
\end{equation*}

With $\mathfrak M$ and $\mathfrak m$ we can define two important tools we will use often in our results: the weighted Hilbert and Thompson's metrics on $\kone_{++}$. 
\begin{defi}
Let $\bb\in\R^d_{++}$ be such that $\sum_i b_i=1$. The weighted Hilbert metric $\mu_{\bb}\colon\kone_{++}\times\kone_{++}\to\R_{+}$ and the weighted Thompson metric $\bar\mu_{\bb}\colon\kone_{++}\times\kone_{++}\to\R_{+}$ are defined as
\begin{align*}
\mu_{\bb}(\bxb,\byb)&=\sum_{i=1}^db_i\ln\!\bigg(\frac{\bmaxi{i}{\bx}{\by}}{\bmini{i}{\bx}{\by}}\bigg)=\ln\!\bigg(\prod_{i=1}^d\frac{\bmaxi{i}{\bx}{\by}^{b_i}}{\bmini{i}{\bx}{\by}^{b_i}}\bigg),\\
\bar\mu_{\bb}(\bxb,\byb)&=\sum_{i=1}^db_i\ln\!\Big(\max\big\{\bmaxi{i}{\bx}{\by},\bmaxi{i}{\by}{\bx}\big\}\Big).
\end{align*}
\end{defi}

Note that, in particular,  it follows from Corollary 2.5.6 in \cite{NB} that $(\Sn_{++},\mu_{\bb})$ and $(\kone_{++},\bar\mu_{\bb})$ are complete metric spaces and their topology coincides with the norm topology, for any choice of the positive weights $\bb$.

\subsection{Preliminary properties and results}
The matrix-power operation \eqref{eq:a^A} has some useful algebraic properties which can be proved with a direct computation and that we summarize below. For every $\bal,\bbe\in\R^n_{++}$ and every nonnegative matrices $B,C\in\R^{n\times n}$, we have
\begin{equation}\label{homoident}
\bal^{B}\circ \bal^{C}=\bal^{B+C}, \qquad \big(\bal^{C}\big)^B=\bal^{BC} \qquad \text{and}\qquad \big(\bal\circ\bbe\big)^B=\bal^B\circ\bbe^B,
\end{equation}
where $\circ$ denotes the entrywise product, i.e. $\bal\circ \bbe =(\alpha_1\beta_1,\ldots,\alpha_n\beta_n)$. Moreover, if $\ba\in\R_{++}^n$ and $\lambda\in\R_{++}$, then 
\begin{equation*}
\prod_{i=1}^n\big(\bal^B\big)_i^{a_i}=\prod_{i=1}^n\alpha_i^{(B^\top\ba)_i}\quad\text{ and }\quad (\lambda^{a_1},\ldots,\lambda^{a_n})^B =(\lambda^{(B\ba)_1},\ldots,\lambda^{(B\ba)_n}).
\end{equation*}

Exploiting this formulas, one can easily verify  that the class $\NB^d$ is closed under several natural operations. We list some of  them in the following
\begin{lem}\label{homoNBd}
Let $F,G\in\NB^d$, $A=\A(F)$ and $B=\A(G)$. Moreover, let $D\in\R^{d\times d}$ with $D\geq A,B$ and for $i\in[d]$ let $\xi_i\colon\R^{n_i}_{+}\to\R_+$ be continuous, order-preserving, $1$-homogeneous mappings such that $\xi_i(\R^{n_i}_+\saufzero)\subset\R_{++}$. Define $N\colon\kone_+\to\R^d_+$ as $N(\bx)=\big(\xi_1(\bx),\ldots,\xi_d(\bx)\big)$. Finally, let $H^{(1)},H^{(2)},H^{(3)}\colon\kone_{+}\to\kone_+$ with
\begin{align*}
H^{(1)}(\bx)&= F\big(G(\bx)\big), \qquad H^{(2)}(\bx)=F(\bx)\circ G(\bx),\\
H^{(3)}(\bx)&=N(\bx)^{D-A}\krog F(\bx)+N(\bx)^{D-B}\krog G(\bx),
\end{align*}
where, in the definition of $H^{(2)}$, $\circ$ denotes the entrywise product. 

Then $H^{(1)},H^{(2)},H^{(3)}\in\NB^d$ with homogeneity matrices $AB$, $A+B$, $D$, respectively. In particular, for every $F\in\NB^d$ we have $\A(F^k)=\A(F)^k$, where $F^k$ denotes $k$ compositions of $F$ with itself. 
\end{lem}

If $F$ is differentiable at $\bx$, we write  $DF(\bx)$ to denote the Jacobian matrix of $F$. We recall below a known theorem that shows that the differential of a mapping $F\in\NB^d$ is order-preserving as well. 
\begin{thm}[Theorem 1.3.1, \cite{NB}]\label{opcharac}
Let $U\subset \kone_+$ be an open convex set. If $F\colon U \to \kone_{+}$ is locally Lipschitz, then $DF(\bx)$ exists for Lebesgue almost all $\bx\in U$, and $F$ is order-preserving if and only if $DF(\bx)\kone_{+}\subset\kone_{+}$ for all $\bx\in U$ for which $DF(\bx)$ exists.
\end{thm}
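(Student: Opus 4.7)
The plan is to split the statement into two parts: (i) the a.e.\ existence of $DF$ on $U$, and (ii) the equivalence, at every point where $DF$ exists, between order-preservation of $F$ and the cone-invariance $DF(\bx)\kone_+\subset\kone_+$. Part (i) is a direct application of Rademacher's theorem to the locally Lipschitz map $F$, and no further argument is needed. For part (ii) I would prove the two implications separately, treating the reverse direction by mollification to bridge the regularity gap.

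For the forward implication, assume $F$ is order-preserving and fix $\bx\in U$ at which $DF(\bx)$ exists. For any $\bv\in\kone_+$, openness of $U$ ensures $\bx+t\bv\in U$ for all sufficiently small $t>0$, and since $t\bv\in\kone_+$ we have $\bx\lek \bx+t\bv$; order-preservation then gives $F(\bx+t\bv)-F(\bx)\in\kone_+$. The difference quotient $t^{-1}(F(\bx+t\bv)-F(\bx))$ remains in $\kone_+$ because $\kone_+$ is a cone, and because $\kone_+$ is closed, the limit $DF(\bx)\bv$ lies in $\kone_+$ as well.

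For the reverse implication, one cannot integrate $DF$ along an arbitrary segment because $DF$ is defined only almost everywhere in $U$. My plan is to smooth by convolution. For each small $\epsilon>0$ let $\phi_\epsilon$ be a nonnegative smooth mollifier with support in the $\epsilon$-ball and unit integral, and set $F_\epsilon=F*\phi_\epsilon$ on the $\epsilon$-interior $U_\epsilon=\{\bx\in U:B(\bx,\epsilon)\subset U\}$. Then $F_\epsilon$ is smooth, and since $F$ is locally Lipschitz a standard differentiation-under-the-integral argument yields
\begin{equation*}
DF_\epsilon(\bx)\bv=\int DF(\bx-\by)\bv\,\phi_\epsilon(\by)\,d\by \qquad\forall\,\bv\in V.
\end{equation*}
For $\bv\in\kone_+$ the integrand lies in the closed convex cone $\kone_+$ for almost every $\by$, hence so does the integral; thus $DF_\epsilon$ is pointwise cone-preserving on $U_\epsilon$. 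Because $F_\epsilon$ is smooth, applying the fundamental theorem of calculus along the segment joining any $\bx\lek\by$ in $U_\epsilon$ (which lies in $U_\epsilon$ by convexity) gives
\begin{equation*}
F_\epsilon(\by)-F_\epsilon(\bx)=\int_0^1 DF_\epsilon\big(\bx+s(\by-\bx)\big)(\by-\bx)\,ds\in\kone_+,
\end{equation*}
so each $F_\epsilon$ is order-preserving on $U_\epsilon$. Finally, given any $\bx\lek \by$ in $U$, both points lie in $U_\epsilon$ for sufficiently small $\epsilon$, and local Lipschitz continuity of $F$ yields $F_\epsilon\to F$ uniformly on a compact neighbourhood. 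Since $\kone_+$ is closed, passing to the limit gives $F(\by)-F(\bx)\in\kone_+$, as required.

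The main obstacle is precisely the measure-theoretic gap in the reverse direction: a.e.\ differentiability in $U$ need not translate into a.e.\ differentiability along any given segment, so the fundamental theorem of calculus cannot be applied to $F$ itself. The mollification argument neutralises this by transferring the cone condition to smooth approximants, where the classical chain of reasoning works, and then exploiting the closedness of $\kone_+$ to pass to the limit. The only bookkeeping subtlety is restricting to $U_\epsilon$ so the convolution is well-defined, which is harmless because the conclusion only involves pairs already in $U$, and every such pair eventually lies in $U_\epsilon$.
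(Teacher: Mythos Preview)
The paper does not prove this theorem at all: it is quoted verbatim as Theorem~1.3.1 of \cite{NB} and used as a black box, so there is no in-paper argument to compare against. Your proposal is a complete and correct proof. Rademacher's theorem gives the a.e.\ differentiability, the forward implication is the standard limit-of-difference-quotients argument using closedness of $\kone_+$, and your mollification device for the reverse implication is a clean way to sidestep the genuine issue you identify, namely that a full-measure subset of $U$ can intersect a fixed segment in a one-dimensional null set. The only point that deserves a word of justification is the identity $DF_\epsilon=(DF)\ast\phi_\epsilon$: this follows because a locally Lipschitz map lies in $W^{1,\infty}_{\mathrm{loc}}$, its distributional gradient is the $L^\infty$ representative that agrees a.e.\ with the classical $DF$, and convolution commutes with weak differentiation. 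With that understood, every step is sound.
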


The next lemma generalizes Euler's theorem for homogeneous mappings to \multihomo{} mappings. It characterizes multi-homogeneous mappings and provides information on the multi-homogeneity of their derivatives. For $U\subseteq V$ and a map $f\colon U \to \R$, denote by $\grad_if(\bx)$ the gradient of $g_i(\by_i)= f(\bx_1,\ldots,\bx_{i-1},\by_i,\bx_{i+1},\ldots,\bx_d)$ at $\by_i=\bx_i$. If there exists $\ba\in\R^d$  such that $f$ satisfies $f(\bal\krog\bx)=f(\bx)\prod^d_{k=1}\alpha_k^{a_k}$ for all $\bx\in U$ and $\bal\in\R^d_{++}$, then $g_i$ is positively homogeneous of degree $a_{i}$ for all $i\in[d]$. With this observation, the following result is a direct consequence of Euler's theorem for homogeneous functions applied to $g_1,\ldots,g_d$ and therefore its proof is omitted.
\begin{lem}\label{Eulerthm}
Let $U\subset V$ be open and such that  $\bal\krog\bx\in U$ for all $\bal\in\R^d_{++}$ and $\bx\in U$. Let $\ba\in\R^d$ and $f\colon U\to\R$, a differentiable mapping. The following are equivalent:
\begin{enumerate}[(1)]
\item It holds\label{homomulti1} $f(\bal\krog\bx)=f(\bx)\prod^d_{k=1}\alpha_k^{a_k}$ for every $\bal\in\R^d_{++},\ \bx\in U.$
\item It holds
\label{homomulti2}
$\ps{\grad_if(\bx)}{\bx_i}=a_if(\bx)$ for every $i \in[d],\ \bx\in U.$
\end{enumerate}
Moreover, if $f$ satisfies \eqref{homomulti1} or \eqref{homomulti2}, then:
\begin{enumerate}[(1)]\setcounter{enumi}{2}
\item It holds $\grad_i f(\bal\krog\bx)=\grad_i f(\bx)\alpha_i^{-1}\prod_{k=1}^d\alpha_k^{a_k} $ for all $i\in[d],\bal\in\R^d_{++},\bx\in U.$
\end{enumerate}
\end{lem}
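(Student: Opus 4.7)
My plan is a classical Euler-type argument applied componentwise to each scaling variable $\alpha_i$, with the multi-homogeneous case reduced to single-variable calculus by holding the remaining $\alpha_k$ fixed.

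For the implication $(1)\Rightarrow(2)$, I would fix $\bx\in U$ and $i\in[d]$ and define the real-valued map $g_i(t)=f(\bx_1,\ldots,t\bx_i,\ldots,\bx_d)$ for $t>0$, which is well-defined since $U$ is invariant under $\otimes$-scaling by $\R^d_{++}$. By $(1)$ with $\alpha_k=1$ for $k\neq i$ and $\alpha_i=t$, we have $g_i(t)=f(\bx)\,t^{a_i}$. Differentiating both expressions at $t=1$ via the chain rule gives $\langle\grad_i f(\bx),\bx_i\rangle=a_i f(\bx)$, which is $(2)$.

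For $(2)\Rightarrow(1)$, I would fix $\bx\in U$ and define the auxiliary map $\Phi\colon\R^d_{++}\to\R$ by
\begin{equation*}
\Phi(\bal)=f(\bal\krog\bx)\,\prod_{k=1}^d\alpha_k^{-a_k}.
\end{equation*}
By the chain rule, $\partial_{\alpha_i}f(\bal\krog\bx)=\langle\grad_i f(\bal\krog\bx),\bx_i\rangle$. Applying $(2)$ at the point $\bal\krog\bx\in U$ (which lies in $U$ by the assumed invariance) and dividing by $\alpha_i>0$ gives $\partial_{\alpha_i}f(\bal\krog\bx)=a_i\,f(\bal\krog\bx)/\alpha_i$. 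A direct computation then shows $\partial_{\alpha_i}\Phi(\bal)=0$ for every $i\in[d]$ and $\bal\in\R^d_{++}$. Since $\R^d_{++}$ is connected, $\Phi$ is constant; evaluating at $\bal=\ones$ yields $\Phi(\bal)=f(\bx)$, which rearranges to $(1)$.

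For the concluding assertion $(3)$, I would differentiate the identity in $(1)$ with respect to the component $x_{i,j}$ of $\bx_i$. The chain rule on the left hand side produces a factor of $\alpha_i$ from the scaling in the $i$-th slot, giving $\alpha_i\bigl(\grad_if(\bal\krog\bx)\bigr)_j$, while the right hand side differentiates to $\bigl(\grad_if(\bx)\bigr)_j\prod_{k=1}^d\alpha_k^{a_k}$. Dividing by $\alpha_i$ yields $(3)$ componentwise. The only mild subtlety in the whole argument is ensuring at step $(2)\Rightarrow(1)$ that the trajectory $\bal\krog\bx$ stays in $U$ so that $(2)$ applies pointwise, but this is exactly the hypothesis imposed on $U$; the argument involves no real obstacle beyond bookkeeping.
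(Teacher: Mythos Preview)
Your proof is correct and follows essentially the same approach as the paper: the paper introduces the auxiliary functions $g_{\bx}(\bal)=f(\bal\krog\bx)-f(\bx)\prod_k\alpha_k^{a_k}$ and $h_{\bx}(\bal)=f(\bal\krog\bx)\prod_k\alpha_k^{-a_k}-f(\bx)$ and argues via $\grad g_{\bx}(\ones)=0$ and $\grad h_{\bx}\equiv 0$, which is exactly your $\Phi$ shifted by a constant, while for (3) the paper writes out the difference quotient explicitly rather than invoking the chain rule, but the content is identical.
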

There exist order-preserving multi-homogeneous mappings which are naturally defined on $\kone_{++}$ rather than on $\kone_+$. This frequently happens in the case $d=1$ when considering the log-exp transform of topical mappings, i.e.\ order-preserving mappings $F\colon \R^{n}\to \R^n$ satisfying $F(\bx+\lambda \ones)=F(\bx)+\lambda \ones$ for all $\bx\in \R^n, \lambda \in \R$ (see e.g. \cite{Gaub_survey} and \cite[Section 1.5]{NB}). We also face such a situation when deriving a dual condition for the existence of a positive eigenvector in Corollary \ref{existdual}. It is then useful to know whether the considered mapping can be continuously extended to a mapping in $\NB^d$. In the case $d=1$, such an extension has been proved to exist in Theorem 3.10 \cite{cont_ext} and Theorem 5.1.2 \cite{NB}. As the proof of this result can be easily generalized for $d>1$ (with the help of Lemma \ref{contract}), we omit it here.
\begin{thm}\label{extend}
Let $F\colon\kone_{++}\to\kone_{++}$ be order-preserving and \multihomo.\!\! If $\A(F)$ has at least one positive entry per row and there exists $\bb\in\R^d_{++}$ such that $\A(F)^\top\bb\leq \bb$, then there exists $\bar F\in\NB^d$ such that $F=\bar F|_{\kone_{++}}$ and $\A(\bar F)=\A(F)$.
\end{thm}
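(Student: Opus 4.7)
The plan is to define $\bar F$ explicitly as a directional limit from the interior and to verify, point by point, the three defining conditions of $\NB^d$. Fix any reference vector $\bv\in\kone_{++}$ and, for each $\bx\in\kone_+$, set
\[
\bar F(\bx) \ = \ \lim_{t\to 0^+} F(\bx + t\bv).
\]
My first task is to show this componentwise limit exists and does not depend on $\bv$. Existence is immediate: since $F$ is order-preserving on $\kone_{++}$, the net $\{F(\bx + t\bv)\}_{t>0}$ is monotone-decreasing as $t\to 0^+$ and bounded below by $0$, hence converges in $\kone_+$. Independence of $\bv$ is the standard sandwich: for any $\bv,\bw\in\kone_{++}$ there exist $c_1,c_2>0$ with $c_1\bw \lek \bv \lek c_2\bw$, and order-preservation of $F$ gives
\[
F(\bx + tc_1\bw) \ \lek \ F(\bx + t\bv) \ \lek \ F(\bx + tc_2\bw),
\]
whose outer limits as $t\to 0^+$ both equal $\lim_{s\to 0^+} F(\bx + s\bw)$ after rescaling the parameter.

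Next, I would verify conditions \eqref{multihomodef}, \eqref{orderpresdef} and \eqref{nondegdef} for $\bar F$ with $\A(\bar F)=\A(F)$. Coincidence $\bar F|_{\kone_{++}}=F$ is immediate from continuity of $F$ on the interior, which also yields $\bar F(\kone_{++})\subset\kone_{++}$. Order-preservation of $\bar F$ on $\kone_+$ follows by passing to the limit $t\to 0^+$ in $F(\bx + t\bv)\lek F(\by + t\bv)$ for $\bx \lek \by$. For multi-homogeneity, let $\bal\in\R^d_{++}$ and set $\bv' = \bal^{-1}\krog \bv \in \kone_{++}$; using $\bal\krog\bx + t\bv = \bal\krog(\bx + t\bv')$ and the multi-homogeneity of $F$ on $\kone_{++}$,
\[
\bar F(\bal\krog\bx) \ = \ \lim_{t\to 0^+} F\big(\bal\krog(\bx + t\bv')\big) \ = \ \lim_{t\to 0^+}\bal^{\A(F)}\krog F(\bx + t\bv') \ = \ \bal^{\A(F)}\krog\bar F(\bx).
\]
The case where some $\alpha_j=0$ then follows by approximating such $\bal$ by vectors in $\R^d_{++}$ and invoking continuity of $\bar F$ on $\kone_+$ (main step below) together with continuity of $\bal\mapsto \bal^{\A(F)}$ on $\R^d_+$; the latter is well defined precisely because each row of $\A(F)$ has at least one positive entry, so the coordinate $(\bal^{\A(F)})_i$ tends to $0$ continuously whenever any relevant $\alpha_j$ does.

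The main obstacle, and the place where the hypothesis $\A(F)^\top \bb \lec \bb$ becomes essential, is full continuity of $\bar F$ on $\kone_+$. Upper semi-continuity is straightforward: since $\bar F(\bx)$ is the componentwise infimum of the continuous functions $\bx\mapsto F(\bx + t\bv)$ over $t>0$, any sequence $\bx^{(n)}\to\bx$ in $\kone_+$ satisfies $\limsup_n \bar F(\bx^{(n)}) \lek \bar F(\bx)$. For the reverse (lower-semi-continuity) inequality, the hypothesis $\A(F)^\top\bb\lec\bb$ enters via \eqref{Lipschitz}, which guarantees that $F$ is a $\mu_{\bb}$-non-expansion on $\kone_{++}$. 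Combined with the completeness of $(\kone_{++},\bar\mu_{\bb})$ (Corollary 2.5.6 in \cite{NB}) and with Lemma \ref{contract}, this non-expansion provides the contraction-type control on how interior sequences transport under $F$ as they approach the boundary, yielding the missing lower-semi-continuity. This mirrors, step for step, the strategy of Theorem 3.10 of \cite{cont_ext} and Theorem 5.1.2 of \cite{NB} in the homogeneous case $d=1$; the only substantive modification for $d>1$ is to replace the standard Hilbert and Thompson metrics by their weighted product counterparts $\mu_{\bb}$ and $\bar\mu_{\bb}$.
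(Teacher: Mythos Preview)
Your approach is essentially the same as the paper's: the paper omits the proof entirely, simply noting that the $d=1$ argument of Theorem 3.10 in \cite{cont_ext} and Theorem 5.1.2 in \cite{NB} carries over to $d>1$ with the help of Lemma \ref{contract}, and your proposal sketches precisely that strategy (directional limit from the interior, monotonicity for existence, non-expansiveness in the weighted metrics for continuity). Your write-up actually supplies more detail than the paper does; the only mildly loose step is the lower-semi-continuity paragraph, which gestures at the mechanism rather than spelling it out, but since the paper defers the entire argument to the same references this is consistent with the intended level of rigor.
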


\section{Contraction principle for Multi-homogeneous mappings}\label{contrasec}
Our first result is a combination of \eqref{Lipschitz} with the Banach fixed point theorem. This result is particularly interesting as it shows that when we can build a metric so that $F\in\NB^d$ is a strict contraction then the existence and uniqueness of a positive eigenvector are always guaranteed without further assumptions. As discussed below \eqref{Lipschitz}, such a metric can be explicitly constructed using the left eigenvector of the homogeneity matrix of $F$ in order to obtain the following:
\begin{thm}\label{Banachcombi}
Let $F\in\NB^d$ and $A=\A(F)$. If $\rho(A)<1$, then $F$ has a unique positive eigenvector $\bx\in \kone_{++}$ up to rescaling of $\bx_i$ for $i\in[d]$.
\end{thm}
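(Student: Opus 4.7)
The plan is to combine the Lipschitz inequality \eqref{Lipschitz} with Banach's fixed point theorem on a suitable complete metric space of normalized vectors. Since $\mu_{\bb}$ is only a pseudo-metric on $\kone_{++}$ (it ignores per-block rescaling), I first need to produce a weight $\bb$ that makes $F$ a strict contraction, and then renormalize $F$ so that it becomes a genuine self-map on a space where $\mu_{\bb}$ is an honest metric.

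\textbf{Step 1: Construct a suitable weight $\bb$.} I would set $\bb=(I-A^\top)^{-1}\ones$, which is well defined because $\rho(A)<1$ implies $\rho(A^\top)<1$ and hence $\sum_{k=0}^\infty(A^\top)^k$ converges. Expanding the Neumann series gives $\bb=\sum_{k=0}^\infty(A^\top)^k\ones\geq\ones$, so $\bb\in\R^d_{++}$. By construction $A^\top\bb=\bb-\ones$, whence $(A^\top\bb)_i/b_i=1-1/b_i$, and therefore
\begin{equation*}
c\,:=\,\max_{i\in[d]}\frac{(A^\top\bb)_i}{b_i}\,=\,1-\frac{1}{\max_i b_i}\,<\,1.
\end{equation*}

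\textbf{Step 2: Renormalize $F$.} Fix any $\bphi\in\kone_{++}$; recall from the notation that $(\S^{\bphi}_{++},\mu_{\bb})$ is a complete metric space. For $\bx\in\S^{\bphi}_{++}$ define $\bal(\bx)\in\R^d_{++}$ by $\alpha_i(\bx)=\ps{F_i(\bx)}{\bphi_i}^{-1}$, which is strictly positive and continuous on $\S^{\bphi}_{++}$ since $F(\kone_{++})\subset\kone_{++}$. Then set $G(\bx)=\bal(\bx)\krog F(\bx)$. By construction $\ps{G_i(\bx)}{\bphi_i}=1$ for every $i$, so $G\colon\S^{\bphi}_{++}\to\S^{\bphi}_{++}$ is a continuous self-map. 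Moreover $\bx^*\in\S^{\bphi}_{++}$ is a fixed point of $G$ if and only if $F(\bx^*)=\blam^*\krog\bx^*$ with $\lambda_i^*=\alpha_i(\bx^*)^{-1}>0$, i.e.\ $\bx^*$ is a positive eigenvector of $F$.

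\textbf{Step 3: Show that $G$ is a strict contraction.} A direct check from the definition of $\mu_{\bb}$ shows that $\mu_{\bb}$ is invariant under independent per-block rescaling: $\mu_{\bb}(\bdel\krog\bu,\bet\krog\bv)=\mu_{\bb}(\bu,\bv)$ for all $\bdel,\bet\in\R^d_{++}$ and $\bu,\bv\in\kone_{++}$, since the factor $\delta_i/\eta_i$ multiplies both the max and the min in the $i$-th block and cancels in the logarithm. Applying this together with \eqref{Lipschitz} yields, for $\bx,\by\in\S^{\bphi}_{++}$,
\begin{equation*}
\mu_{\bb}\bigl(G(\bx),G(\by)\bigr)=\mu_{\bb}\bigl(F(\bx),F(\by)\bigr)\leq c\,\mu_{\bb}(\bx,\by),
\end{equation*}
with $c<1$ from Step~1.

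\textbf{Step 4: Conclude via Banach.} Since $(\S^{\bphi}_{++},\mu_{\bb})$ is complete and $G$ is a strict contraction, $G$ admits a unique fixed point $\bx^*\in\S^{\bphi}_{++}$; by Step~2 this is a positive eigenvector of $F$. For uniqueness up to rescaling, if $\by\in\kone_{++}$ is any positive eigenvector of $F$ with $F(\by)=\bmu\krog\by$, then for $\bet\in\R^d_{++}$ defined by $\eta_i=\ps{\by_i}{\bphi_i}^{-1}$ the vector $\bet\krog\by\in\S^{\bphi}_{++}$ is again an eigenvector of $F$ (using the multi-homogeneity identity) and hence a fixed point of $G$; by uniqueness it equals $\bx^*$, which shows that $\by$ differs from $\bx^*$ only by per-block rescaling.

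The only non-routine point is Step~3, where one has to verify the per-block rescaling invariance of $\mu_{\bb}$ to transfer the contraction estimate \eqref{Lipschitz} (which holds on all of $\kone_{++}$ but is only a pseudo-metric estimate there) into a genuine contraction estimate for $G$ on the complete space $\S^{\bphi}_{++}$.
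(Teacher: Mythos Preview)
Your proof is correct and follows essentially the same strategy as the paper: pick a weight $\bb\in\R^d_{++}$ making the Lipschitz constant in \eqref{Lipschitz} strictly less than $1$, renormalize $F$ to a self-map of a complete slice, and apply the Banach fixed point theorem. The only noteworthy difference is in Step~1: you construct $\bb$ explicitly via the Neumann series $(I-A^\top)^{-1}\ones$, whereas the paper (Remark~\ref{newrmq}) obtains $\bb$ by perturbing $A$ to $A+t\ones\ones^\top$ and taking the Perron eigenvector of the transpose; your construction is a bit more direct.
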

The proof of this result is postponed to the end of the next Subsection \ref{subsec:thm31}.

The simplicity of the assumptions in the above theorem is remarkable. While this result was known in the case $d=1$ (see for instance \cite{Bus73}), it has strong novel implications in the Perron-Frobenius theory for spectral problems induced by nonnegative tensors, which we discuss in \cite{tensorpaper}. A simple consequence of Theorem \ref{Banachcombi} is the following: Let $M\in\R^{m\times n}$ be a nonnegative matrix, then the nonlinear power method of \cite{Boyd} for the estimation of $\norm{M}_{p,q}=\max\{\norm{M\bx}_{p}\mid \norm{\bx}_{q}=1\}$ always converges to the global maximum, whenever $p<q$ and $M^\top M$ has at least one nonzero entry per row. The existing convergence result for this method requires $M^\top M$ to be irreducible which is much more restrictive. 

Unfortunately, the eigenvalue problem $M\bx = \lambda \bx$ where $M\in\R^{n\times n}$ is a matrix with positive entries and $\bx\in\R^n_+$, does not satisfy the assumptions of Theorem \ref{Banachcombi} because in this particular case, $F(\bx)=M\bx$ is one homogeneous and so $\A(F)=1$. That is, $F$ is nonexpansive but may not be a strict contraction. This explains to some extent why the linear Perron-Frobenius theorem requires $M$ to be irreducible and not simply $F(\R^n_{++})\subset \R^n_{++}$. To distinguish these cases and facilitate our discussion, for a mapping $F\in\NB^d$, we say that $F$ is a (strict) contraction if $\rho(\A(F))<1$ and say that $F$ is nonexpansive if $\rho(\A(F))=1$. As for the case $d=1$, when $d>1$ the study of nonexpansive mappings is more involved than that of strict contractions.

\subsection{Lipschitz continuity and the contraction principle}\label{subsec:thm31}
The following lemma provides a Lipschitz constant for $F\in \NB^d$ with respect to the weighted Hilbert and Thompson metrics.
\begin{lem}\label{contract}
Let $F\in\NB^d$, $A=\A(F)$, $\bb\in\R^d_{++}$. For every $\bx,\by\in\kone_{++}$, it holds
\begin{equation}\label{nnnexpeq}
\mu_{\bb}\big(F(\bx),F(\by)\big)\,\leq\, C\, \mu_{\bb}(\bx,\by) \andd \bar\mu_{\bb}\big(F(\bx),F(\by)\big)\,\leq\, C\, \bar\mu_{\bb}(\bx,\by). 
\end{equation}
where $C= \max\big\{(A^\top\bb)_i/b_i\ \big|\ i\in[d]\big\}$.
\end{lem}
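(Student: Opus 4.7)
The whole argument rests on sandwiching $\bx$ by $\by$ (and vice-versa), transporting that sandwich through $F$ by its order-preserving and multi-homogeneous properties, and then summing logs with the weight vector $\bb$.

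\textbf{Step 1: Sandwich $\bx$ via $\by$.} Set $\bmu = \Bmin{\bx}{\by}$ and $\bM = \Bmax{\bx}{\by}$ in $\R^d_{++}$. By the very definition of these quantities,
$$\bmu \krog \by \ \lek \ \bx \ \lek \ \bM \krog \by.$$
Apply $F$: order-preservation yields $F(\bmu\krog\by)\lek F(\bx)\lek F(\bM\krog\by)$, and multi-homogeneity \eqref{multihomodef} rewrites the outer terms using $A=\A(F)$, giving
$$\bmu^{A}\krog F(\by)\ \lek\ F(\bx)\ \lek\ \bM^{A}\krog F(\by).$$
In particular, for each $i\in[d]$ and each $j_i\in[n_i]$, the ratio $F_{i,j_i}(\bx)/F_{i,j_i}(\by)$ lies in the interval $[(\bmu^A)_i,(\bM^A)_i]$, so
$$\frac{\bmaxi{i}{F(\bx)}{F(\by)}}{\bmini{i}{F(\bx)}{F(\by)}}\ \leq\ \frac{(\bM^A)_i}{(\bmu^A)_i}.$$

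\textbf{Step 2: Take weighted logs.} Using the identity $\ln(\bal^A)_i = \sum_k A_{i,k}\ln \alpha_k$ and swapping summation order, I get
$$\mu_{\bb}\bigl(F(\bx),F(\by)\bigr)\ \leq\ \sum_{i=1}^d b_i\sum_{k=1}^d A_{i,k}\ln\!\frac{M_k}{\mu_k}\ =\ \sum_{k=1}^d (A^\top\bb)_k\,\ln\!\frac{\bmaxi{k}{\bx}{\by}}{\bmini{k}{\bx}{\by}}.$$
Since each $M_k/\mu_k\geq 1$ the logs are nonnegative, and bounding $(A^\top\bb)_k\leq C\,b_k$ (the very definition of $C$) closes the first inequality.

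\textbf{Step 3: Thompson metric.} Replace $\bmu,\bM$ by $\btau\in\R^d_{++}$ defined componentwise as $\tau_i=\max\{\bmaxi{i}{\bx}{\by},\bmaxi{i}{\by}{\bx}\}$. Now $\bx\lek \btau\krog\by$ and $\by\lek\btau\krog\bx$ simultaneously, so the same order-preservation plus multi-homogeneity argument gives
$$F(\bx)\ \lek\ \btau^{A}\krog F(\by) \andd F(\by)\ \lek\ \btau^{A}\krog F(\bx),$$
hence $\max\{\bmaxi{i}{F(\bx)}{F(\by)},\bmaxi{i}{F(\by)}{F(\bx)}\}\leq(\btau^A)_i$. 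Taking weighted logs and using $\tau_i\geq 1$ (so all logs are nonnegative), the identical computation as in Step 2 yields $\bar\mu_{\bb}(F(\bx),F(\by))\leq C\,\bar\mu_{\bb}(\bx,\by)$.

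\textbf{Expected obstacle.} There is no serious obstacle; the whole proof is essentially bookkeeping on the identities $\bal^A\krog\cdot$ from Section~2.2. The only point requiring a moment of care is the nonnegativity of the log-factors before substituting $(A^\top\bb)_k\leq C\,b_k$, which would fail if we had signed weights; but both metrics are built so that the relevant quotients are $\geq 1$, so the estimate goes through cleanly.
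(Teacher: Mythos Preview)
Your proof is correct and follows essentially the same approach as the paper: sandwich $\bx$ by scalings of $\by$, push through $F$ via order-preservation and multi-homogeneity to obtain \eqref{homoineq}, then take weighted logs and swap summation to expose $A^\top\bb$. The only cosmetic difference is in the Thompson part: you introduce the symmetric $\btau$ up front and bound both directions at once, whereas the paper bounds $\bmaxi{i}{F(\bx)}{F(\by)}$ and $\bmaxi{i}{F(\by)}{F(\bx)}$ separately via $\Bmax{\bx}{\by}^A$ and $\Bmax{\by}{\bx}^A$ and then pulls the max inside the product; the two computations are equivalent, and your version is arguably tidier.
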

\begin{proof}
For any $\bxb,\byb\in\kone_{++}$, we have
\begin{equation}\label{homoineq}
\Bmin{\bx}{\by}^A\krog F(\by)\ \lek\ F(\bx)\ \lek\ \Bmax{\bx}{\by}^A\krog F(\by).
\end{equation}
It follows that for every $(j_1,\ldots,j_d)\in\J$ it holds
\begin{equation*}
\prod_{i=1}^d\bmini{i}{\bx}{\by}^{(A^\top\bb)_i}\leq \prod_{i=1}^d\bigg(\frac{F_{i,j_i}(\bx)}{F_{i,j_i}(\by)}\bigg)^{b_i}\leq \prod_{i=1}^d\bmaxi{i}{\bx}{\by}^{(A^\top\bb)_i}.
\end{equation*}
Hence, we have
\begin{align*}
\mu_{\bb}\big(F(\bxb),F(\byb)\big)&= \sum_{i=1}^db_i\ln\!\bigg(\frac{\bmaxi{i}{F(\bx)}{F(\byb)}}{\bmini{i}{F(\bxb)}{F(\byb)}}\bigg)\leq \sum_{i=1}^d(A^\top\bb)_i\ln\!\bigg(\frac{\bmaxi{i}{\bxb}{\byb}}{\bmini{i}{\bxb}{\byb}}\bigg)\\
&=  \sum_{i=1}^d\frac{(A^\top\bb)_i}{b_i}b_i\ln\!\bigg(\frac{\bmaxi{i}{\bxb}{\byb}}{\bmini{i}{\bxb}{\byb}}\bigg)\leq C\,\mu_{\bb}(\bxb,\byb).
\end{align*}
Furthermore, Equation \eqref{homoineq} implies that
\begin{align*}
\bar\mu_{\bb}\big(F(\bxb),F(\byb)\big)
&\leq \ln\!\Bigg(\prod_{i=1}^d\max\!\bigg\{\prod_{k=1}^d\bmaxi{k}{\bx}{\by}^{A_{i,k}},\prod_{k=1}^d\bmaxi{k}{\by}{\bx}^{A_{i,k}}\bigg\}^{b_i}\Bigg)\\
&\leq \ln\!\bigg(\prod_{k=1}^d\max\!\Big\{\bmaxi{k}{\bx}{\by},\bmaxi{k}{\by}{\bx}\Big\}^{(A^\top\bb)_k}\bigg)\leq C\,\bar\mu_{\bb}\big(\bxb,\byb\big),
\end{align*}
which concludes the proof.
\end{proof}
The constant $C$ in the above lemma cannot be improved further without additional assumptions on $F\in\NB^d$. This fact is illustrated by the following example where we show that for any matrix $A\in\R^d_{+}$ with $A\R^d_{++}\subset \R^d_{++}$, there exists a mapping $F\in\NB^d$ such that $\A(F)=A$ and we have equality in \eqref{nnnexpeq} for some $\bx,\by\in\kone_{++}$ with $\bx\neq \by$. Moreover, the example shows that there exist  mappings $F\in \NB^d$ such that $C\leq 1$ in the lemma above,  even though $F^k$ is expansive with respect to the  Hilbert and Thompson metrics on  $\R^{n_1+\ldots+n_d}_{++}$, for all $k\geq 1$. 
This example motivates the study of multi-homogeneous mappings and illustrates that several arguments involving standard homogeneous mappings do not hold anymore in the multi-homogeneous framework.
\newcommand{\Lip}{\operatorname{Lip}}
\begin{ex}\label{tightmot}
	Let $d\geq 2$, $n\geq 2$, $A\in\R^{d\times d}$ be any matrix such that $A\R^{d}_{++} \subset \R^d_{++}$. Set $n_1=\ldots=n_d=n$ and let $\pi\colon\{1,\ldots,n\}\to \{1,\ldots,n\}$ be a permutation. Define $F\colon\kone_+\to\kone_+$ as $F_{i,j_i}(\bx) = \prod_{l=1}^dx_{l,\pi(j_i)}^{A_{i,l}}$ for every $(i,j_i)\in\I$. Then, we have $F\in\NB^d$ with $\A(F)=A$ and, for every $\bx,\by\in\kone_{++}$,  $\bb\in\R^d_{++}$ and  $(j_1,\ldots,j_d)\in\J$, it holds
	\begin{equation*}
	\prod_{i=1}^d \Big(\frac{F_{i,j_i}(\bx)}{F_{i,j_i}(\by)}\Big)^{b_i}=\prod_{i=1}^d \Big(\prod_{l=1}^d\Big(\frac{x_{l,\pi(j_i)}}{y_{l,\pi(j_i)}}\Big)^{A_{i,l}}\Big)^{b_i}=\prod_{i=1}^d\Big(\frac{x_{i,\pi(j_i)}}{y_{i,\pi(j_i)}}\Big)^{(A^\top\bb)_i}.
	\end{equation*}
	Hence, for all $\bx,\by\in\kone_{++}$, we have
	\begin{equation}\label{coolrelation}
	\mu_{\bb}(F(\bx),F(\by)) = \mu_{A^\top \bb}(\bx,\by) \andd \bar\mu_{\bb}(F(\bx),F(\by)) = \bar\mu_{A^\top \bb}(\bx,\by).
	\end{equation}
	It follows that for all $i\in[d]$, if $\bx_j=\by_j$ for all $j\in[d]\sauf\{i\}$ and $\mu_1(\bx_i,\by_i)>0$, where $\mu_1\colon\R^n_{++}\times \R^n_{++}\to \R_+$ denotes the Hilbert metric \eqref{eq:standard_H_metric} on $\R^n_{++}$, then 
	$$\mu_{\bb}(F(\bx),F(\by)) = \frac{(A^\top\bb)_i}{b_i}\mu_{\bb}(\bx,\by)\andd \bar\mu_{\bb}(F(\bx),F(\by)) = \frac{(A^\top\bb)_i}{b_i}\bar\mu_{\bb}(\bx,\by),$$
	and thus, there exists $\bx,\by\in \kone_{++}$ such that $\mu_{\bb}(F(\bx),F(\by))=C_{\bb}\mu_{\bb}(\bx,\by)$ and
	$\bar\mu_{\bb}(F(\bx),F(\by))=C_{\bb} \bar\mu_{\bb}(\bx,\by)$ where $C_{\bb}=\max_{i\in[d]}(A^\top\bb)_i/b_i$.

	We now show that the matrix $A$ can be chosen so that $F$ is nonnexpansive with respect to $\mu_{\bb}$ and $\bar\mu_{\bb}$, whereas $F^m$ is expansive with respect to the standard Hilbert and Thompson's metrics $\mu$ and $\bar\mu$ on the ``flattened'' space $\R^{dn}_{+}$, for all $m\geq 1$.

	 For convenience, for  $\delta\colon\kone_{++}\times \kone_{++}\to \R_+$ and  $G\colon \kone_{++}\to \kone_{++}$, let us denote the smallest Lipschitz constant of $G$ with respect to $\delta$ as
	$$\Lip(G,\delta)=\inf\big\{c>0\ \big|\ \delta(G(\bx),G(\by))\leq c\,\delta(\bx,\by),\, \forall \bx,\by\in\kone_{++}\big\}.$$
	Then, the above discussion together with Lemma \ref{contract} imply that 
	\begin{align}\label{exactLIP}
	\max_{i\in[d]}\frac{(A^\top\bb)_i}{b_i}=\Lip(F,\mu_{\bb})=\Lip(F,\bar\mu_{\bb}).
	\end{align}
	Now, 
	it follows from Lemma \ref{homoNBd} that for every $m\geq 1$, it holds
	$F^m_{i,j_i}(\bx) = \prod_{l=1}^dx_{l,\pi^m(j_i)}^{(A^m)_{i,l}}$ for every $(i,j_i)\in\I$. Therefore, if $\bb$ is a positive eigenvector such that $A^\top \bb = \rho(A)\bb$, then by \eqref{coolrelation} and \eqref{exactLIP} we have 
	$
	\rho(A)^m =\Lip(F^m,\mu_{\bb})=\Lip(F^m,\bar\mu_{\bb})$ for all $m\geq 1.$	
	We now show that  for all $m\geq 1$, it holds
	\begin{equation}\label{boundonc}
	 \norm{A^m}_{\infty}\leq\Lip(F^m,\mu)\andd \norm{A^m}_{\infty}\leq\Lip(F^m,\bar\mu).
	\end{equation}
	where, $\norm{\cdot}_{\infty}$ denotes the matrix infinity norm $\norm{M}_{\infty}=\max_{i\in[d]} \sum_{j=1}^d|M_{i,j}|$.
	We prove the claim for $m=1$, as the case $m> 1$ can be easily deduced by substituting $A$ with $A^m$ and $\pi$ with $\pi^m$ in the following argument. 

	For  $s,t>0$, define $\bv^{(s,t)}=(s,t,1,\ldots,1)\in\R^n_{++}$ and $\bx^{(s,t)}=(\bv^{(s,t)},\ldots,\bv^{(s,t)})\in\kone_{++}$.
	Then, for every $s,t,\tilde s,\tilde t>0$, it holds 
	$$ \frac{x_{i,j_i}^{(\tilde s,\tilde t)}}{ x_{i,j_i}^{(s,t)}}=\begin{cases}\tilde s/ s & \text{if }j_i = 1,\\ {\tilde t}/{t} & \text{if }j_i = 2,\\ 1& \text{otherwise},\end{cases}\quad\text{and}\quad \frac{F_{i,j_i}(\bx^{(\tilde s,\tilde t)})}{F_{i,j_i}(\bx^{(s,t)})}=\begin{cases}  \big({\tilde s}/{s}\big)^{(\sum_{l=1}^d A_{i,l})} & \text{if }\pi(j_i) = 1,\\\big({\tilde t}/{t}\big)^{(\sum_{l=1}^d A_{i,l})} & \text{if }\pi(j_i) = 2,\\ 1& \text{otherwise}.\end{cases}$$
	Therefore
	$\mu(F(\bx^{(s,1)}),F(\bx^{(1,t)})) = \norm{A}_{\infty}\mu(\bx^{(s,1)},\bx^{(1,t)})$ and $\bar\mu(F(\bx^{(s,1)}),F(\bx^{(1,t)}))  = \norm{A}_{\infty}\bar\mu(\bx^{(s,1)},\bx^{(1,t)})$,  for every $s>t>1$. Thus \eqref{boundonc} holds as claimed.

	Finally, let us consider the following $3\times 3$ example matrix
	$$A = \frac{1}{4}\begin{pmatrix} 0& 12 & 0\\ 1 & 0 & 1\\ 0 & 4 & 0 \end{pmatrix}\, .$$
	Then, for any integer $m\geq 1$ it holds
	$A^{2m-1}=A$ and 
	$A^{2m} = \frac{1}{4}\begin{psmallmatrix} 3& 0 & 3\\ 0 & 4 & 0\\ 1 & 0 & 1 \end{psmallmatrix}$. 
	Furthermore, $\rho(A)=1$, $\bb =(1,4,1)^\top$ is a positive eigenvector such that $A^\top\bb = \rho(A)\bb$, and for all $m\geq 1$ it holds $\norm{A^{2m-1}}_{\infty}=3$ and $\norm{A^{2m}}_{\infty}=3/2$. Hence, with this particular $A$, we have
	$$\Lip(F^m,\mu_{\bb})=\Lip(F^m,\bar\mu_{\bb})=1<\frac{3}{2}\leq \min\{\Lip(F^m,\mu),\Lip(F^m,\bar\mu)\},$$
	for all $m\geq 1$, i.e. $F^m$ is nonexpansive with respect to the weighted Hilbert and Thompson metrics $\mu_{\bb},\bar\mu_{\bb}$ on $\R^{n}_+\times \R^{n}_+\times \R^{n}_+$ whereas every power of $F$ is expansive with respect to the standard Hilbert and Thompson metrics $\mu,\bar \mu$ on $\R^{3n}_+$.
\end{ex}

We deduce Theorem \ref{Banachcombi} from Lemma \ref{contract}.
\begin{proof}[Proof of Theorem \ref{Banachcombi}]
As $\rho(A)<1$ by assumption, the Collatz-Wielandt principle implies the existence of $\bb\in\R^d_{++}$ such that $C = \max_{i\in[d]}(A^\top \bb )_i/b_i <1$. By Lemma \ref{contract}, we have 
$\mu_{\bb}\big(F(\bxb),F(\byb)\big)\leq C \mu_{\bb}(\bxb,\byb)$ for all $ \bxb,\byb\in\kone_{++}.$
Now, consider the mapping $G\colon\Sn_{++}\to\Sn_{++}$ defined as
$G(\bxb)=\big(\norm{F_1(\bx)}_{\gamma_1}^{-1},\ldots,\norm{F_d(\bx)}_{\gamma_d}^{-1}\big)\krog F(\bx)$, then we have $\mu_{\bb}\big(G(\bx),G(\by)\big)=\mu_{\bb}\big(F(\bx),F(\by)\big)$ for every $\bx,\by\in\Sn_{++}$. Thus,
$G$ is a strict contraction on the complete metric space $(\Sn_{++},\mu_{\bb})$. The result is now a consequence of the Banach fixed point theorem (see e.g. Theorem 3.1 in \cite{pointfixe}).
\end{proof}

\section{ Spectral radius }\label{existence_section}
Maximality plays an important role in the Perron-Frobenius theory. For example, if the eigenvectors of a mapping $F\in\NB^d$ are the critical points of some potential $f\colon\kone_+\to\R$, then we want to assert that nonnegative or positive eigenvectors coincides with the global maximizer of $f$, constrained on some product of unit balls. In this setting, the function $f$ can be regarded as the numerator of a Rayleigh quotient. In order to keep such connections, we propose the following way to compare the ``spectral magnitude'' of eigenvectors. The main idea is to fix the scaling of eigenvectors by imposing unit norm constraints on $\bx_i$ and then take the weighted geometric mean of the eigenvalues $\lambda_1,\ldots,\lambda_d$ associated to $\bx\in\kone_{+,0}$. In particular, note that the eigenvectors of $F\in\NB^d$ can always be rescaled so that they belong to $\Sn_+$. So, for $\bb\in\R^d_{++}$, we introduce the following notion of spectral radius of $F\in\NB^d$
\begin{equation*}
r_{\bb}(F)=\sup \Big\{\prod_{i=1}^d\lambda_i^{b_i}\ \Big|\ F(\bx)=\blam\krog\bx \text{ for some }\bx\in\Sn_+\Big\}.
\end{equation*}
Note that $r_{\bb}(F)$ is always nonnegative, as $F(\kone_+)\subset\kone_+$ and so $F(\bx)=\blam\krog\bx$ implies $\lambda_i\geq0$ for all $i\in[d]$.
By Theorem \ref{Banachcombi}, it is clear that $r_{\bb}(F)$ is well defined for strict contractions in $\NB^d$. It is however less clear that, in the case where $F$ is nonexpansive, the supremum above is not taken over an empty set. This issue is addressed by the next theorem which can be seen as a generalization of the weak Perron-Frobenius theorem. In particular, it is shown that every nonexpansive mapping $F\in\NB^d$ for which there exists $\bb\in\R^d_{++}$ with $\A(F)^\top\bb=\bb$, has a nonnegative eigenvector with eigenvalue corresponding to $r_{\bb}(F)$. A proof for the case $d=1$ can be found in Theorem 5.4.1 \cite{NB} and essentially relies on the fact that the spectral radius of an order-preserving homogeneous mapping can be characterized in terms of its Bonsall spectral radius \cite{Bonsall} and in terms of its cone spectral radius \cite{cone_spec_rad}. By generalizing these characterizations, we obtain the following:
\begin{thm}\label{weak_PF}
Let $F\in\NB^d$ and $A=\A(F)$. If there exists  $\bb\in\R^d_{++}$ such that $\sum_ib_i=1$ and $A^\top\bb=\bb$,  then there exists $\bu\in\Sn_+$ and $\blam\in\R^d_+$ such that $F(\bu)=\blam\krog\bu$ and $r_{\bb}(F)=\prod_{i=1}^d\lambda_i^{b_i}$. Furthermore, it holds
\begin{equation*}\label{specrad_charac}
r_{\bb}(F)=\sup_{\bx\in\kone_{+,0}} \limsup_{m\to\infty}\prod_{i=1}^d\norm{F^m_i(\bx)}_{\gamma_i}^{b_i/m}=\lim_{m\to\infty}\sup_{\bx\in\Sn_+}\prod_{i=1}^d\norm{F^m_i(\bx)}_{\gamma_i}^{b_i/m}.
\end{equation*}
\end{thm}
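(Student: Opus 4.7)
The strategy is to set $s_m(\bx) := \prod_{i=1}^d \|F^m_i(\bx)\|_{\gamma_i}^{b_i}$ and $u_m := \sup_{\bx\in\Sn_+} s_m(\bx)$, and to prove the chain
\begin{equation*}
r_{\bb}(F)\;\leq\;\sup_{\bx\in\kone_{+,0}}\limsup_{m\to\infty} s_m(\bx)^{1/m}\;\leq\;\lim_{m\to\infty}u_m^{1/m}\;\leq\;r_{\bb}(F),
\end{equation*}
establishing along the way that the limit in the third term exists. First I would prove the submultiplicativity $u_{m+n}\leq u_m u_n$. Given $\bx\in\Sn_+$ with $F^m(\bx)\in\kone_{++}$, write $F^m(\bx)=\bmu\krog\by$ with $\mu_i=\|F^m_i(\bx)\|_{\gamma_i}$ and $\by\in\Sn_+$; by Lemma \ref{homoNBd} we have $\A(F^n)=A^n$, so
\begin{equation*}
F^{m+n}(\bx)=F^n(\bmu\krog\by)=\bmu^{A^n}\krog F^n(\by).
\end{equation*}
Taking the weighted product of norms on both sides and using the identity $(A^n)^\top\bb=\bb$ inherited from $A^\top\bb=\bb$ collapses the $\bmu$-dependent factor $\prod_i(\bmu^{A^n})_i^{b_i}=\prod_k\mu_k^{((A^n)^\top\bb)_k}=\prod_k\mu_k^{b_k}$ back into $s_m(\bx)$, producing the multiplicative identity $s_{m+n}(\bx)=s_m(\bx)\,s_n(\by)\leq u_m u_n$. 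Taking suprema yields $u_{m+n}\leq u_m u_n$, and since $u_m>0$ (choose $\bx\in\Sn_{++}$ and use $F(\kone_{++})\subset\kone_{++}$), Fekete's lemma produces $\lim_m u_m^{1/m}=\inf_m u_m^{1/m}$. Boundary $\bx\in\Sn_+$ with $s_m(\bx)=0$ make the inequality trivial.

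For the easy inequalities of the chain: if $\bu\in\Sn_+$ is an eigenvector with $F(\bu)=\blam\krog\bu$, induction gives $F^m(\bu)=\blam^{\sum_{k=0}^{m-1}A^k}\krog\bu$, and using $\sum_{k=0}^{m-1}(A^\top)^k\bb=m\bb$ together with $\|\bu_i\|_{\gamma_i}=1$ yields $s_m(\bu)=\bigl(\prod_i\lambda_i^{b_i}\bigr)^m$, so $\limsup_m s_m(\bu)^{1/m}=\prod_i\lambda_i^{b_i}$; this establishes $r_{\bb}(F)\leq$ (middle term). For (middle term) $\leq\lim u_m^{1/m}$: any $\bx\in\kone_{++}$ decomposes as $\bal\krog\by$ with $\by\in\Sn_+$, and the same multi-homogeneity computation shows $s_m(\bx)=\bigl(\prod_k\alpha_k^{b_k}\bigr)s_m(\by)\leq\bigl(\prod_k\alpha_k^{b_k}\bigr)u_m$; the extraneous factor tends to $1$ after taking $m$-th roots. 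Points in $\kone_{+,0}\setminus\kone_{++}$ are either annihilated by the iterates (because a zero entry $\bx_j=0$ combined with a positive column $A_{\cdot,j}$ forces $F(\bx)$ to have zeros) or reduce to a lower-dimensional sub-product-cone via the multi-homogeneity exponents.

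The substantive inequality is $\lim u_m^{1/m}\leq r_{\bb}(F)$, for which I must produce an eigenvector attaining this value. The first ingredient is Brouwer's fixed-point theorem applied to a perturbed normalized map. Fixing $\bc\in\kone_{++}$ and $\epsilon>0$, define $\Phi_\epsilon\colon\Sn_+\to\Sn_+$ by $\Phi_\epsilon(\bx)_i=F_i(\bx+\epsilon\bc)/\|F_i(\bx+\epsilon\bc)\|_{\gamma_i}$, which is continuous on the compact convex set $\Sn_+$ since $\bx+\epsilon\bc\in\kone_{++}$ forces $F_i(\bx+\epsilon\bc)\in\R^{n_i}_{++}$. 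Brouwer produces a fixed point $\bu_\epsilon\in\Sn_+$ with $F(\bu_\epsilon+\epsilon\bc)=\blam_\epsilon\krog\bu_\epsilon$; extracting a subsequence as $\epsilon\to 0$ and invoking continuity of $F$ yields a limit eigenpair $(\bu,\blam)\in\Sn_+\times\R^d_+$ of $F$ itself.

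The main obstacle is that this basic Brouwer argument only delivers \emph{some} eigenvector, whose product $\prod_i\lambda_i^{b_i}$ is bounded above (via the first paragraph) by $\lim u_m^{1/m}$ but need not attain it. To close this gap, the perturbation-and-limit construction should be applied to $F^{m_k}$ for a sequence $m_k\to\infty$, exploiting that $\A(F^{m_k})=A^{m_k}$ still satisfies $(A^{m_k})^\top\bb=\bb$ and that $F^{m_k}\in\NB^d$; seeding the construction with near-maximizers $\bx^{(m_k)}\in\Sn_+$ of $s_{m_k}$ produces a sequence of eigenpairs of the perturbed iterates whose eigenvalue products approach $\lim u_m^{1/m}$. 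A diagonal compactness argument combined with the nonexpansiveness guaranteed by Lemma \ref{contract} (with respect to the weighted Thompson metric $\bar\mu_{\bb}$) extracts a limiting eigenpair of $F$ itself with $\prod_i\lambda_i^{b_i}=\lim u_m^{1/m}=r_{\bb}(F)$. This coupling of Fekete-type subadditivity, Brouwer's theorem, and nonexpansiveness is the technical heart of the proof.
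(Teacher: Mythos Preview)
Your Fekete-and-chain setup is essentially the content of Theorem~\ref{Thm531} and Proposition~\ref{Prop536} in the paper, and that part is fine modulo two repairable slips: $\Sn_+$ is not convex for general monotonic norms (the paper applies Brouwer on $\S^{\bphi}_+$ instead), and your treatment of boundary points $\bx\in\kone_{+,0}\setminus\kone_{++}$ is incorrect as written---such $\bx$ have $\bx_j\neq 0$ for every $j$, so no block ``vanishes''; the right argument is simply $\bx\lek C\ones$ and order-preservation, which gives $s_m(\bx)\leq C'\,s_m(\be)$ for a fixed $\be\in\Sn_{++}$.

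The genuine gap is your maximality step. Brouwer's theorem cannot be ``seeded'': it gives existence of \emph{some} fixed point, with no control over which one, so a near-maximizer of $s_{m_k}$ plays no role in what the theorem returns. Worse, an eigenvector of $F^{m_k}$ need not be, or converge to, an eigenvector of $F$, and neither the diagonal argument nor non-expansiveness bridges that gap. The paper closes it differently: it perturbs $F$ to a map $F^{(\delta)}(\bx)=F(\bx)+\delta\big(\|\bx_1\|_{\gamma_1},\ldots,\|\bx_d\|_{\gamma_d}\big)^A\krog\ones$ that is still in $\NB^d$ with the \emph{same} homogeneity matrix and satisfies $F^{(\delta)}(\kone_{+,0})\subset\kone_{++}$. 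Brouwer then yields a \emph{positive} eigenvector $\bx^{(\delta)}$ of $F^{(\delta)}$, and because positive eigenvectors realise the cone spectral radius (Proposition~\ref{Prop536}), one gets $\prod_i(\lambda_i^{(\delta)})^{b_i}=\bar r_{\bb}(F^{(\delta)})$ exactly. Monotonicity in $\delta$ and compactness then push this down to a maximal eigenpair of $F$. Your additive perturbation $\bx\mapsto F(\bx+\epsilon\bc)$ destroys multi-homogeneity, so this route is blocked.

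That said, your perturbation can in fact be rescued by a much simpler observation than the one you propose. Set $\bv_\epsilon=\bu_\epsilon+\epsilon\bc\in\kone_{++}$; from $F(\bv_\epsilon)=\blam_\epsilon\krog\bu_\epsilon\lek\blam_\epsilon\krog\bv_\epsilon$ and order-preservation one iterates to $F^m(\bv_\epsilon)\lek\blam_\epsilon^{\sum_{j=0}^{m-1}A^j}\krog\bv_\epsilon$, and since $\bv_\epsilon$ is strictly positive the analogue of Proposition~\ref{Prop536} gives $\lim_m s_m(\bv_\epsilon)^{1/m}=\lim_m u_m^{1/m}\leq\prod_i(\lambda_{\epsilon,i})^{b_i}$. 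Passing to the limit $\epsilon\to 0$ yields $\lim_m u_m^{1/m}\leq\prod_i\lambda_i^{b_i}\leq r_{\bb}(F)$, closing your chain without any appeal to iterates $F^{m_k}$.
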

The proof of this theorem relies on a number of preliminary results and thus is postponed to the end of the next Subsection \ref{specrad_sec}

\subsection{Spectral radius of nonexpansive mappings}\label{specrad_sec}
\newcommand{\abs}[2]{\langle #1 \rangle_{#2} ZZ}
We consider the notions of Bonsall spectral radius and cone spectral radius for mappings $F\in\NB^d$ such that there exists $\bb>0$ with $\A(F)^\top\bb=\bb$. This allows us to show that the supremum in the definition of $r_{\bb}(F)$ is attained. 

For convenience, from now on let us denote by $\Dn$ the set
$$
\Dn = \{\bz \in \R^d_{++} : z_1+\dots+z_d = 1\}\, .
$$

Let $\bx\in\kone_+$, $F\in\NB^d$, $A=\A(F)$, $\bb\in\Dn$ and assume that $A^\top\bb =\bb$. Define
\begin{equation*}
\nnorm{\bxb}_{\bb}= \prod_{i=1}^d\norm{\bx_i}_{\gamma_i}^{b_i}\andd \nnorm{F}_{\bb}= \sup_{\bxb \in \Sn_+}\nnorm{F(\bxb)}_{\bb}.
\end{equation*}
Then, for every $\bal\in\R^d_{++}$ and $\bx \in\kone_{+,0}$, it holds
\begin{equation*}
\nnorm{F(\bal\krog \bx)}_{\bb} = \nnorm{\bal^A\krog F(\bx)}_{\bb} =\nnorm{F(\bx)}_{\bb}\prod_{i=1}^d\alpha_i^{(A^\top\bb)_i}=\nnorm{F(\bx)}_{\bb}\prod_{i=1}^d\alpha_i^{b_i}
\end{equation*}
Hence, with $\bbe= (\norm{\bx_1}^{-1}_{\gamma_1},\ldots,\norm{\bx_d}^{-1}_{\gamma_d})$, we have
\begin{equation}\label{opnormineq}
\nnorm{F(\bxb)}_{\bb}=\nnorm{F(\bbe\krog\bxb)}_{\bb}\nnorm{\bxb}_{\bb}\leq\nnorm{F}_{\bb}\nnorm{\bxb}_{\bb} \qquad \forall \bxb\in\kone_{+,0}.
\end{equation}
Now, consider
\begin{equation*}
\bar r_{\bb}(F)= \sup_{\bxb\in\kone_{+,0}}\limsup_{m\to \infty}\nnorm{F^{m}(\bxb)}_{\bb}^{1/m}
\andd
\hat{r}_{\bb}(F)= \lim_{m\to\infty}\nnorm{F^{m}}_{\bb}^{1/m}.
\end{equation*} 
In the case $d=1$, $\hat r_{\bb}$ is known as Bonsall spectral radius \cite{Bonsall} and $\bar r_{\bb}$ is known as cone spectral radius \cite{cone_spec_rad}. Note that for every $\lambda >0$, as $\sum_{i=1}^d b_i=1$, we have
\begin{equation*}
\bar r_{\bb}(\lambda\, F)= \lambda\, \bar r_{\bb}(F)\andd \hat r_{\bb}(\lambda\, F)= \lambda \,\hat r_{\bb}(F).
\end{equation*}
Moreover, if $M\in\R^{n\times n}$ is a nonnegative matrix and $F(\bx)=M\bx$, then the Gelfand formula \cite{Gelfand} implies that $\rho(M)=\bar r_{1}(F)=\hat r_{1}(F)$. The proof of Theorem 5.3.1 \cite{NB}, a special case of Theorem 2.2 \cite{cone_spec_rad}, can be easily adapted to obtain the following theorem, whose proof is omitted for brevity.
\begin{thm}\label{Thm531} 
Let $F\in\NB^d$, $A=\A(F)$ and $\bb\in\Dn$ with $A^\top\bb=\bb$, then it holds
$0 \leq  \bar r_{\bb}(F) = \hat{r}_{\bb}(F) <\infty.$
\end{thm}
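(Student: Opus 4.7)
My plan is to adapt the standard equality between the cone spectral radius and the Bonsall spectral radius of a one-homogeneous order-preserving map to the multi-homogeneous setting, using the hypothesis $A^\top\bb=\bb$ to keep the weighted norm $\nnorm{\cdot}_\bb$ multiplicative along iterates. By Lemma~\ref{homoNBd}, each iterate satisfies $F^m\in\NB^d$ with $\A(F^m)=A^m$, and $(A^m)^\top\bb=\bb$ for every $m\in\N$. Hence \eqref{opnormineq} applies to every $F^m$, giving $\nnorm{F^m(\bx)}_\bb\leq \nnorm{F^m}_\bb\,\nnorm{\bx}_\bb$ for all $\bx\in\kone_{+,0}$. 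Chaining this with $\bx=F^k(\by)$ and taking the supremum over $\by\in\Sn_+$ yields the submultiplicativity $\nnorm{F^{m+k}}_\bb\leq\nnorm{F^m}_\bb\,\nnorm{F^k}_\bb$. Continuity of $F$ together with compactness of $\Sn_+$ ensures $\nnorm{F^m}_\bb<\infty$, so Fekete's lemma guarantees the existence of $\hat r_\bb(F)=\inf_m\nnorm{F^m}_\bb^{1/m}\in[0,\infty)$. The easy direction $\bar r_\bb(F)\leq \hat r_\bb(F)$ is then immediate: taking $m$-th roots in $\nnorm{F^m(\bx)}_\bb\leq \nnorm{F^m}_\bb\,\nnorm{\bx}_\bb$, passing to $\limsup_m$ for each $\bx\in\kone_{+,0}$ (using $\nnorm{\bx}_\bb^{1/m}\to 1$), and taking the supremum in $\bx$ gives the claim.

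The reverse inequality $\hat r_\bb(F)\leq \bar r_\bb(F)$ is the main work. I would compare the full operator norm to a single orbit starting at the strictly positive vector $\bu=(\ones,\ldots,\ones)\in\kone_{++}$. Since the norms $\norm{\cdot}_{\gamma_i}$ are monotonic, they are equivalent to $\ell^\infty$, so there exists $C\geq 1$ such that $\by\lek \bga\krog\bu$ for every $\by\in\Sn_+$, with $\bga=(C,\ldots,C)\in\R^d_{++}$. Order-preservation and multi-homogeneity of $F^m$ then give
\begin{equation*}
F^m(\by)\,\lek\, F^m(\bga\krog\bu)\,=\,\bga^{A^m}\krog F^m(\bu),
\end{equation*}
and monotonicity of the $\gamma_i$-norms propagates this to $\nnorm{F^m(\by)}_\bb\leq \prod_i(\bga^{A^m})_i^{b_i}\,\nnorm{F^m(\bu)}_\bb$. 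The exponent collapses, using both $(A^m)^\top\bb=\bb$ and $\bb\in\Dn$:
\begin{equation*}
\prod_i(\bga^{A^m})_i^{b_i}\,=\,C^{\sum_i b_i\sum_j(A^m)_{i,j}}\,=\,C^{\ones^\top(A^m)^\top\bb}\,=\,C^{\ones^\top\bb}\,=\,C.
\end{equation*}
Taking the supremum over $\by\in\Sn_+$ yields $\nnorm{F^m}_\bb\leq C\,\nnorm{F^m(\bu)}_\bb$, and then $m$-th roots with $C^{1/m}\to 1$ give $\hat r_\bb(F)\leq \limsup_m \nnorm{F^m(\bu)}_\bb^{1/m}\leq \bar r_\bb(F)$.

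The main obstacle is precisely this exponent cancellation: without the fixed-point property $A^\top\bb=\bb$, the quantity $\sum_i b_i\sum_j(A^m)_{i,j}$ would in general grow with $m$, the slack $C$ would be replaced by $C^{r_m}$ with $r_m$ possibly unbounded, and the $m$-th root would fail to converge to $1$. The hypothesis $A^\top\bb=\bb$ is exactly what forces a single test orbit from $\bu$ to witness the full operator-norm growth, which is the heart of the statement.
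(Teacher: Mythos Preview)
Your proof is correct and follows essentially the same route the paper implicitly invokes: the paper omits the argument and states that the proof of Theorem~5.3.1 in \cite{NB} ``can be easily adapted,'' and your write-up is precisely that adaptation---submultiplicativity of $m\mapsto\nnorm{F^m}_\bb$ via \eqref{opnormineq}, Fekete's lemma for the existence of $\hat r_\bb(F)$, and the comparison $\nnorm{F^m}_\bb\leq C\,\nnorm{F^m(\bu)}_\bb$ obtained by dominating $\Sn_+$ by a scaled positive vector and collapsing the exponent through $(A^m)^\top\bb=\bb$; this is also the mechanism the paper makes explicit in its proof of Proposition~\ref{Prop536}. One cosmetic remark: when you chain \eqref{opnormineq} with $\bx=F^k(\by)$ you should note that the inequality extends from $\kone_{+,0}$ to all of $\kone_+$ by continuity (or argue directly that $F^k_i(\by)=0$ forces $\nnorm{F^{m+k}(\by)}_\bb=0$ via $(A^m)^\top\bb=\bb>0$), but this is a trivial patch.
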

In the following proposition we extend the second part of Theorem 2.2 \cite{cone_spec_rad} to the multi-homogeneous case. In particular, it implies that if $F\in\NB^d$ is nonexpansive and has a positive eigenvector $\bu\in\Sn_{++}$ with $F(\bu)=\blam \krog\bu$, then $\bar r_{\bb}(F)=\prod_{i=1}^d\lambda_i^{b_i}$. Moreover, we use this proposition for the proof of the Collatz-Wielandt formula in Section \ref{CW_M_U1}.
\begin{prop}\label{Prop536}
Let $F\in\NB^d$, $A=\A(F)$ and $\bb\in\Dn$ with $A^\top\bb=\bb$. Then,
$\bar r_{\bb}(F)=\lim_{m\to\infty}\nnorm{F^{m}(\bxb)}_{\bb}^{1/m}$ for all $ \bxb\in\kone_{++}$.
Moreover, for every $\by\in \Sn_{+}$ and $\bt\in\R^d_{++}$ with $\bt\krog \by \lek F(\by)$, we have 
$\prod_{i=1}^d\theta_i^{b_i}\leq \bar r_{\bb}(F).$
\end{prop}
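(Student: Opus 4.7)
The plan is to prove the two claims separately. For the first, the idea is to sandwich $\nnorm{F^m(\bxb)}_{\bb}^{1/m}$ between two quantities that both converge to $\hat r_{\bb}(F)=\bar r_{\bb}(F)$ (the equality being Theorem~\ref{Thm531}). For the second, the idea is to iterate $\bt\krog\by\lek F(\by)$ via order-preservation and multi-homogeneity and then exploit the identity $(A^\top)^j\bb=\bb$ for every $j\geq 0$, which follows inductively from the hypothesis $A^\top\bb=\bb$.

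For the first part, the upper bound is straightforward: by Lemma~\ref{homoNBd} we have $F^m\in\NB^d$ with $\A(F^m)=A^m$, and since $(A^m)^\top\bb=\bb$, the operator-norm estimate \eqref{opnormineq} applies to $F^m$, giving $\nnorm{F^m(\bxb)}_{\bb}\leq\nnorm{F^m}_{\bb}\,\nnorm{\bxb}_{\bb}$. Taking $m$-th roots and passing to the $\limsup$ yields $\limsup_m \nnorm{F^m(\bxb)}_{\bb}^{1/m}\leq \hat r_{\bb}(F)$. For the $\liminf$ bound, I would construct a uniform reverse comparison: since each $\norm{\cdot}_{\gamma_i}$ is equivalent to $\norm{\cdot}_\infty$ on $\R^{n_i}$ and $\bxb\in\kone_{++}$ has strictly positive coordinates, there exists $\bbe\in\R^d_{++}$ depending only on $\bxb$ such that $\bv\lek \bbe\krog\bxb$ for every $\bv\in\Sn_+$. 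Order-preservation and multi-homogeneity of $F^m$, combined with monotonicity of the norms, then give
$$
\nnorm{F^m(\bv)}_{\bb}\leq \prod_{i=1}^d(\bbe^{A^m})_i^{b_i}\,\nnorm{F^m(\bxb)}_{\bb}=\prod_{k=1}^d\beta_k^{((A^m)^\top\bb)_k}\,\nnorm{F^m(\bxb)}_{\bb}=\Big(\prod_{k=1}^d\beta_k^{b_k}\Big)\nnorm{F^m(\bxb)}_{\bb}.
$$
Taking the supremum over $\bv\in\Sn_+$, the $m$-th root, and sending $m\to\infty$ yields $\hat r_{\bb}(F)\leq \liminf_m\nnorm{F^m(\bxb)}_{\bb}^{1/m}$, proving the first claim.

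For the second part, starting from $\bt\krog\by\lek F(\by)$ and applying $F$ repeatedly, I would show by induction that
$$
\bt^{S_m}\krog\by\,\lek\, F^m(\by),\qquad S_m=I+A+A^2+\ldots+A^{m-1},
$$
using the step $F(\bt^{S_m}\krog\by)=\bt^{A S_m}\krog F(\by)\gek \bt^{AS_m}\krog(\bt\krog\by)=\bt^{I+AS_m}\krog\by=\bt^{S_{m+1}}\krog\by$ (here $A$ and $S_m$ commute). Taking $\nnorm{\cdot}_{\bb}$ via monotonicity of the norms and using $S_m^\top\bb=\sum_{j=0}^{m-1}(A^\top)^j\bb=m\bb$ gives
$$
\nnorm{F^m(\by)}_{\bb}\geq \prod_{i=1}^d(\bt^{S_m})_i^{b_i}\,\nnorm{\by}_{\bb}=\prod_{k=1}^d\theta_k^{(S_m^\top\bb)_k}\,\nnorm{\by}_{\bb}=\Big(\prod_{k=1}^d\theta_k^{b_k}\Big)^m\nnorm{\by}_{\bb}.
$$
Since $\by\in\Sn_+$ gives $\nnorm{\by}_{\bb}=1$ and $\by\in\kone_{+,0}$, taking $m$-th roots and the $\limsup$ yields $\prod_k\theta_k^{b_k}\leq \limsup_m\nnorm{F^m(\by)}_{\bb}^{1/m}\leq \bar r_{\bb}(F)$ by the definition of $\bar r_{\bb}(F)$.

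The main obstacle I anticipate is the $\liminf$ step in the first part. The natural tool, the Hilbert-metric contraction of Lemma~\ref{contract}, only compares interior points of $\kone_{++}$, whereas the supremum defining $\nnorm{F^m}_{\bb}$ is taken over $\Sn_+$, which in general contains boundary points. I would sidestep this by trading the Hilbert-metric bound for the crude but uniform componentwise majorization $\bv\lek \bbe\krog\bxb$ furnished by finite-dimensional norm equivalence; this feeds cleanly into order-preservation and multi-homogeneity without requiring $\bv\in\kone_{++}$.
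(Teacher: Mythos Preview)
Your proof is correct and follows essentially the same route as the paper's own argument: both sandwich $\nnorm{F^m(\bxb)}_{\bb}$ between $\sigma^{-1}\nnorm{F^m}_{\bb}$ and $\nnorm{F^m}_{\bb}\nnorm{\bxb}_{\bb}$ via a uniform majorization $\bv\lek\bbe\krog\bxb$ for $\bv\in\Sn_+$ (the paper writes $\bs$ for your $\bbe$), and both prove the second claim by iterating $\bt\krog\by\lek F(\by)$ to $\bt^{S_m}\krog\by\lek F^m(\by)$ and using $S_m^\top\bb=m\bb$. Your explicit discussion of why one avoids the Hilbert metric and instead uses the crude componentwise bound is a nice addition, but the underlying mechanics are identical.
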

\begin{proof}
Let $\bxb\in\kone_{++}$, there exists $\bs\in\R^d_{++}$ such that for every $\byb \in \Sn_+$, it holds $\byb \lek \bs\krog\bxb$. Let $\sigma = \prod_{i=1}^d s_i^{b_i}$. For $k \in \N$ and $\byb \in  \Sn_+$ we have 
\begin{equation*}
\nnorm{F^{k}(\byb)}_{\bb}\leq \nnorm{F^{k}(\bs\krog\bxb)}_{\bb}= \nnorm{\bs^{A^k}\krog F^{k}(\bxb)}_{\bb}=\nnorm{F^{k}(\bxb)}_{\bb}\sigma. 
\end{equation*}
It follows that
\begin{equation*}
\nnorm{F^{k}}_{\bb}=\sup_{\by\in\Sn_+}\nnorm{F^{k}(\byb)}_{\bb}\leq\nnorm{F^{k}(\bxb)}_{\bb}\sigma, 
\end{equation*}
so that, with $(A^k)^\top\bb = \bb$ and \eqref{opnormineq}, we get
\begin{equation*}
\nnorm{F^{k}}_{\bb}\sigma^{-1}\leq \nnorm{F^{k}(\bxb)}_{\bb} \leq \nnorm{F^{k}}_{\bb}\nnorm{\bxb}_{\bb} \qquad \forall k\in\N.
\end{equation*}
Theorem \ref{Thm531} implies $\lim_{k\to \infty}\nnorm{F^{k}}^{1/k}_{\bb}=\bar r_{\bb}(F)$, hence
$\bar r_{\bb}(F)=\lim_{k\to\infty}\nnorm{F^k(\bxb)}^{1/k}_{\bb}.$
Now, if $\bt\krog\by\lek F(\by)$, then for all $k\in\N$
\begin{equation*}
\nnorm{F^{k}(\by)}_{\bb}\geq\nnorm{\by}_{\bb} \prod_{i=1}^d\theta_i^{(\sum_{j=0}^{k-1} (A^j)^\top\bb)_i} =\nnorm{\by}_{\bb}\prod_{i=1}^d\theta_i^{kb_i} .
\end{equation*}
Hence,
\begin{equation*}
\prod_{i=1}^d\theta_i^{b_i} = \lim_{k\to \infty}\Big(\nnorm{\by}_{\bb}\prod_{i=1}^d\theta_i^{kb_i}\Big)^{1/k} \leq \limsup_{k\to \infty}\nnorm{F^{k}(\by)}_{\bb}^{1/k} \leq \bar r_{\bb}(F),
\end{equation*}
which concludes the proof.
\end{proof}
The last tool we need to prove the weak Perron-Frobenius Theorem \ref{weak_PF}, is the next result which is a generalization of Theorem 5.4.1 \cite{NB}, where we prove the existence of an eigenvector corresponding to the spectral radius for a class of mappings in $\NB^d$. Although being of interest in its own, this theorem will also be helpful in Section \ref{wirr_thm} for the proof of the existence of a positive eigenvector. Furthermore, we will use it in Section \ref{CW_M_U} to show that the Collatz-Wielandt characterization of the spectral radius holds without the assumption that there exists a positive eigenvector.
\begin{thm}\label{BIGTHM}
Let $F\in\NB^d$, $A=\A(F)$ and $\bb\in\Dn$ with $A^\top\bb=\bb$. For each $\delta>0$, define $F^{(\delta)}\colon\kone_{+}\to \kone_{+}$ as
\begin{equation*}
F^{(\delta)}(\bx)= F(\bx)+\delta\big(\norm{\bx_1}_{\gamma_1},\ldots,\norm{\bx_d}_{\gamma_d}\big)^A\krog\ones,
\end{equation*}
where $\ones$ is the vector of all ones. Then, the following statements hold:
\begin{enumerate}[(1)]
\item For every $\delta>0$, we have $F^{(\delta)}\in\NB^d$, $\A(F^{(\delta)})=A$ and there exists $(\blam^{(\delta)},\bxb^{(\delta)})$ in $\R^d_{++}\times\Sn_{++}$ such that
\label{abra2} 
$F^{(\delta)}(\bxb^{(\delta)})=\blam^{(\delta)}\krog\bxb^{(\delta)}$ and 
$ \prod_{i=1}^d (\lambda_i^{(\delta)})^{b_i}=\bar r_{\bb}(F^{(\delta)}).$
\item If $0<\eta<\epsilon$, then $\bar r_{\bb}(F^{(\eta)})<\bar r_{\bb}(F^{(\epsilon)})$ and hence $\lim_{\delta\to 0}\bar r_{\bb}(F^{(\delta)}) = r$ exists.\label{abra3}
\item There exists $(F^{(\delta_k)})_{k=1}^{\infty}\subset \{F^{(\delta)}\}_{\delta>0}$ such that $\lim_{k\to\infty}F^{(\delta_k)}= F$ and the corresponding sequence $(\blam^{(\delta_k)},\bx^{(\delta_k)})_{k=1}^{\infty}$ obtained from \eqref{abra2}, converges to a maximal eigenpair of $F$. That is, there exists a pair $(\blam,\bx)\in\R^d_{+}\times \S_+$ such that\label{abra4} it holds $\lim_{k\to\infty}(\blam^{(\delta_k)},\bxb^{(\delta_k)}) =(\blam,\bx)$, 
$F(\bx)=\blam \krog \bx$ and $\bar r_{\bb}(F)=\prod_{i=1}^d \lambda_i^{b_i}=r.$
\end{enumerate}
\end{thm}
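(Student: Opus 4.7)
I will establish the three claims in order, using the strictly positive perturbation to secure interior eigenvectors and then letting $\delta\to 0$.

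For (1), a direct check (equivalently, Lemma \ref{homoNBd} with the $1$-homogeneous order-preserving functionals $\xi_i(\bx)=\norm{\bx_i}_{\gamma_i}$) shows that $F^{(\delta)}\in\NB^d$ with $\A(F^{(\delta)})=A$. The key consequence of the assumption $A\R^d_{++}\subset\R^d_{++}$ is that $N(\bx)^A\in\R^d_{++}$ for every $\bx\in\kone_{+,0}$, so $F^{(\delta)}(\bx)\in\kone_{++}$ even when $\bx\in\Sn_+\setminus\Sn_{++}$. I then apply Brouwer's theorem to the renormalised map
\[
\bx \,\longmapsto\, \big(\ps{F^{(\delta)}_1(\bx)}{\ones},\ldots,\ps{F^{(\delta)}_d(\bx)}{\ones}\big)^{-I} \krog F^{(\delta)}(\bx)
\]
on the convex compact product of simplices $\S^{\ones}_+$; since the image lies in $\S^{\ones}_{++}$, any fixed point is strictly positive, producing an eigenpair in $\R^d_{++}\times\S^{\ones}_{++}$. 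A component-wise rescaling sends it to $(\blam^{(\delta)},\bx^{(\delta)})\in\R^d_{++}\times\Sn_{++}$, and the identity $A^\top\bb=\bb$ ensures that $\prod_i(\lambda_i^{(\delta)})^{b_i}$ is invariant under such rescalings. Iterating the eigenvalue relation yields $(F^{(\delta)})^m(\bx^{(\delta)})=(\blam^{(\delta)})^{\sum_{j=0}^{m-1}A^j}\krog\bx^{(\delta)}$; using $\sum_{j=0}^{m-1}(A^\top)^j\bb=m\bb$ one obtains $\nnorm{(F^{(\delta)})^m(\bx^{(\delta)})}_{\bb}^{1/m}\to\prod_i(\lambda_i^{(\delta)})^{b_i}$, and Proposition \ref{Prop536} identifies the limit with $\bar r_{\bb}(F^{(\delta)})$.

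For (2), evaluating $F^{(\epsilon)}$ at the eigenvector $\bx^{(\eta)}\in\Sn_{++}$ from (1) and using $N(\bx^{(\eta)})=\ones$ gives
\[
F^{(\epsilon)}(\bx^{(\eta)}) \,=\, \blam^{(\eta)}\krog\bx^{(\eta)} + (\epsilon-\eta)\,\ones.
\]
Setting $M_i=\max_{j_i}x^{(\eta)}_{i,j_i}$ (finite since $\bx^{(\eta)}\in\Sn_{++}$) and $t_i=\lambda_i^{(\eta)}+(\epsilon-\eta)/M_i$ yields $t_i>\lambda_i^{(\eta)}$ together with $\bt\krog\bx^{(\eta)}\lek F^{(\epsilon)}(\bx^{(\eta)})$. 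Proposition \ref{Prop536} applied to $F^{(\epsilon)}$ with this $\bt$ and $\by=\bx^{(\eta)}$ then gives $\bar r_{\bb}(F^{(\eta)})=\prod_i(\lambda_i^{(\eta)})^{b_i}<\prod_i t_i^{b_i}\leq\bar r_{\bb}(F^{(\epsilon)})$, where strictness uses $b_i>0$ for every $i$. This strict monotonicity together with the uniform upper bound from Theorem \ref{Thm531} guarantees that $r=\lim_{\delta\downarrow 0}\bar r_{\bb}(F^{(\delta)})$ exists.

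For (3), compactness does the work. The normalised eigenvectors $\{\bx^{(\delta)}\}_{\delta\in(0,1]}\subset\Sn_+$ lie in a compact set, and $\lambda_i^{(\delta)}=\norm{F^{(\delta)}_i(\bx^{(\delta)})}_{\gamma_i}$ is uniformly bounded by $\sup_{\by\in\Sn_+}\norm{F_i(\by)}_{\gamma_i}+\norm{\ones}_{\gamma_i}$; I extract a subsequence $\delta_k\downarrow 0$ along which $(\bx^{(\delta_k)},\blam^{(\delta_k)})\to(\bx,\blam)\in\Sn_+\times\R^d_+$. Since $F^{(\delta_k)}\to F$ uniformly on compacta, passing to the limit in the eigenvalue equation yields $F(\bx)=\blam\krog\bx$, and continuity of $\blam\mapsto\prod_i\lambda_i^{b_i}$ (with the convention $0^{b_i}=0$) together with (2) forces $\prod_i\lambda_i^{b_i}=r$. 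To conclude $\bar r_{\bb}(F)=r$: the monotonicity of $\bar r_{\bb}$ under pointwise domination (obtained from order-preservation of iterates combined with monotonicity of the norms $\norm{\cdot}_{\gamma_i}$), applied to $F(\bx)\lek F^{(\delta)}(\bx)$, gives $\bar r_{\bb}(F)\leq \bar r_{\bb}(F^{(\delta)})$ for every $\delta>0$ and hence $\bar r_{\bb}(F)\leq r$; the reverse inequality follows from Proposition \ref{Prop536} applied to the limit eigenpair $(\bx,\blam)$ (trivially when $r=0$, and via $r>0\Rightarrow\blam\in\R^d_{++}$ otherwise). The main obstacle throughout is preserving strict positivity of the approximants so that Proposition \ref{Prop536} can be invoked at each stage — this is precisely what the engineered perturbation $\delta N(\bx)^A\krog\ones$ secures, since it is strictly positive on all of $\kone_{+,0}$ rather than only on $\kone_{++}$.
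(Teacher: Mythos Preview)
Your proof is correct and follows essentially the same route as the paper's: Brouwer on a product of simplices using that $F^{(\delta)}(\kone_{+,0})\subset\kone_{++}$, then evaluating $F^{(\epsilon)}$ at $\bx^{(\eta)}$ and invoking Proposition~\ref{Prop536} with a strictly dominating $\bt$, and finally a compactness/limit argument sandwiching $\bar r_{\bb}(F)$ between $r$ from both sides. The only cosmetic slip is in part~(2), where you cite an ``upper bound from Theorem~\ref{Thm531}'' for the existence of $\lim_{\delta\downarrow 0}\bar r_{\bb}(F^{(\delta)})$; since the function is increasing in $\delta$, what you actually need is the trivial lower bound $\bar r_{\bb}\geq 0$, not an upper bound.
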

\begin{proof}
We prove \eqref{abra2}:
Let $\delta>0$, then $F^{(\delta)}\in\NB^d$ and $A=\A(F^{(\delta)})$ follow from Lemma \ref{homoNBd}. 
Let $\bphi\in\kone_{++}$ and let $ \S^{\bphi}_{+}=\big\{\bx\in\kone_+ \ \big|\ \ps{\bx_i}{\bphi_i}=1, \ \forall i\big\}$.  Since $F^{(\delta)}(\kone_{+,0})\subset \kone_{++}$, the mapping $G^{(\delta)}\colon\S^{\bphi}_{+}\to\S^{\bphi}_+$, defined by
\begin{equation*}
G^{(\delta)}(\bz) =\big(\langle{\bphi_1},{F_1^{(\delta)}(\bz)}\rangle^{-1},\ldots,\langle{\bphi_d},{F_d^{(\delta)}(\bz)}\rangle^{-1}\big)\krog F^{(\delta)}(\bz).
\end{equation*} 
is well defined and continuous. It follows from the Brouwer fixed point theorem (see for instance \cite{Brouwer}) that $G^{(\delta)}$ has a fixed point $\tilde\bx^{(\delta)}\in\S^{\bphi}_+$. We have $\tilde \bx^{(\delta)} \in\kone_{++}$ as $F^{(\delta)}(\kone_{+,0})\subset \kone_{++}$. By rescaling $\tilde \bx^{(\delta)}$ and using $G^{(\delta)}(\tilde \bx^{(\delta)})=\tilde \bx^{(\delta)}$ we obtain the existence of $(\blam^{(\delta)},\bx^{(\delta)})\in\R^d_{++}\times \Sn_{++}$ such that $F^{(\delta)}(\bxb^{(\delta)})=\blam^{(\delta)}\krog\bxb^{(\delta)}$. By Proposition \ref{Prop536}, we know $\prod_{i=1}^d (\lambda_i^{(\delta)})^{b_i}=\bar r_{\bb}(F^{(\delta)})$. \\
We prove \eqref{abra3}:
Let $0<\eta<\epsilon$. As $F^{(\eta)}(\bxb^{(\eta)})=\blam^{(\eta)}\krog\bxb^{(\eta)}$, we have 
\begin{equation*}
F^{(\epsilon)}(\bxb^{(\eta)})=F(\bx^{(\eta)})+\epsilon\ones=F^{(\eta)}(\bxb^{(\eta)})+(\epsilon-\eta)\ones=\blam^{(\eta)}\krog\bxb^{(\eta)}+(\epsilon-\eta)\ones.
\end{equation*}
There exist $\zeta>0$ such that 
$\zeta\bxb^{(\eta)}\leq (\epsilon-\eta)\ones$ and $\xi>0$ such that $\bar r_{\bb}(F^{(\eta)})+\xi<\prod_{i=1}^d(\lambda_i^{(\eta)}+\zeta)^{b_i}.$
We have $(\blam^{(\eta)}+\zeta\ones)\krog\bxb^{(\eta)} \lek F^{(\epsilon)}(\bxb^{(\eta)})$. So, Proposition \ref{Prop536} implies
\begin{equation*}
\bar r_{\bb}(F^{(\eta)})+\xi<\prod_{i=1}^d(\lambda_i^{(\eta)}+\zeta)^{b_i}\leq \bar r_{\bb}(F^{(\epsilon)}).
\end{equation*}
Hence, $\bar r_{\bb}(F^{(\eta)})< \bar r_{\bb}(F^{(\epsilon)})$ for every $0<\eta<\epsilon$.\newline 
Finally, we prove \eqref{abra4}. There exists $C>0$ such that $\by\leq C\ones$ for every $\by\in\Sn_+$. It follows that for every  $0<\epsilon \leq 1$, it holds 
\begin{equation*}
0 \lek\blam^{(\epsilon)}\krog \bx^{(\epsilon)} = F^{(\epsilon)}(\bx^{(\epsilon)})\lek F^{(1)}(\bx^{(\epsilon)}) \lek F^{(1)}(C\ones),
\end{equation*}
and thus $\{\blam^{(\epsilon)}\mid 0<\epsilon\leq 1\}$ is bounded in $\R^d_+$ as well as $\{\bxb^{(\epsilon)}\mid 0<\epsilon\leq 1\}\subset \Sn_+$. Hence, there exists $(\epsilon_k)_{k=1}^{\infty}\subset\R_{++}$ with $\epsilon_k\to 0$, $\bx^{(\epsilon_k)}\to \bx$ and $\blam^{(\epsilon_k)}\to\blam$ as $k\to\infty$. Note that 
$r=\lim_{k\to\infty}\prod_{i=1}^d(\lambda_i^{(\epsilon_k)})^{b_i}=\prod_{i=1}^d\lambda_i^{b_i}.$
Now, \begin{equation*}
F(\bxb^{(\epsilon_k)})=F^{(\epsilon_k)}(\bxb^{(\epsilon_k)})-\epsilon_k\ones=\blam^{(\epsilon_k)}\krog\bxb^{(\epsilon_k)}-\epsilon_k\ones.
\end{equation*}
follows from $F^{(\epsilon_k)}(\bxb^{(\epsilon_k)})=\blam^{(\epsilon_k)}\krog\bxb^{(\epsilon_k)}$. So, by continuity of $F$, we get
\begin{equation*}
F(\bxb)=\lim_{k\to\infty}F(\bxb^{(\epsilon_k)})=\lim_{k\to\infty}\blam^{(\epsilon_k)}\krog\bxb^{(\epsilon_k)}-\epsilon_k\ones=\blam\krog\bxb.
\end{equation*}
On the one hand, by definition, we have 
\begin{equation*}
\bar r_{\bb}(F)\geq\limsup_{m\to\infty}\nnorm{F^{m}(\bxb)}_{\bb}^{1/m}=\limsup_{m\to\infty}\Big(\nnorm{\blam^{\sum_{j=0}^{m-1}A^{j}}\krog\bxb}_{\bb}\Big)^{1/m}=\prod_{i=1}^d\lambda_i^{b_i}.
\end{equation*}
On the other hand, Proposition \ref{Prop536} implies $\blam\krog \bxb = F(\bxb)\lek F^{(\epsilon_k)}(\bxb)$ so that
\begin{equation*}
\bar r_{\bb}(F)\leq \bar r_{\bb}(F^{(\epsilon_k)})=\prod_{i=1}^d(\lambda_i^{(\epsilon_k)})^{b_i}\qquad\forall k\in\N.
\end{equation*}
Letting $k\to\infty$, we finally get $\bar r_{\bb}(F)\leq\prod_{i=1}^d\lambda_i^{b_i}$.
\end{proof}
The proof of Theorem \ref{weak_PF} is now a collection of the results above.
\begin{proof}[Proof of Theorem \ref{weak_PF}]
Theorem \ref{BIGTHM} implies the existence of $(\blam,\bub)\in\R^d_{+}\times \Sn_{+}$ such that $F(\bub)=\blam\krog\bub$ and $\prod_{i=1}^d\lambda_i^{b_i}=\bar r_{\bb}(F)$. Hence, we have $\bar r_{\bb}(F)\leq r_{\bb}(F)$. For the reverse inequality, note that if $\bv\in\Sn_+$ is an eigenvector of $F$ with $F(\bv)=\bt\krog \bv$, then by Proposition \ref{Prop536} we have $\prod_{i=1}^{d}\theta_i^{b_i}\leq \bar r_{\bb}(F)$. It follows that $\bar r_{\bb}(F)= r_{\bb}(F)$ and the characterizations of $r_{\bb}(F)$ follow from Theorem \ref{Thm531}. 
\end{proof}

\section{Existence of positive eigenvectors for nonexpansive mappings}\label{wirr_thm}
As in the linear case, we need to introduce a concept of irreducibility in order to ensure that a nonexpansive mapping has a positive eigenvector. Generalizing the definition of irreducible matrix is a delicate task when dealing with nonlinear mappings. Indeed, already in the study of eigenvectors of order-preserving homogeneous mappings on cones, different generalizations are required to achieve the various results known for irreducible matrices such as existence of a positive eigenvector and simplicity of the spectral radius.  In order to obtain an irreducibility condition which ensures that nonexpansive multi-homogeneous mappings have a positive eigenvector, we choose to extend the graph approach discussed in \cite{Gaubert}. We note however that other existence results are discussed in Sections 6.1, 6.2 and 6.3 of \cite{NB} as well as in Section 5 of \cite{lemmens2018detecting}.

We extend the definition of directed graph associated to order-preserving homogeneous mappings, proposed in \cite{Gaubert}, to multi-homogeneous mappings. 
For any $i\in[d]$ and $j_i\in[n_i]$, consider the mapping $\bu^{(i,j_i)}\colon\R_{+}\to\kone_{+}$ defined as
\begin{equation} \label{defu}
\big(\bub^{(i,j_i)}(t)\big)_{k,l_k}=\begin{cases} t & \text{if } (k,l_k)=(i,j_i)\\ 1 & \text{otherwise},\end{cases} \qquad \forall (k, l_k)\in\I \ =\bigcup_{\nu=1}^d\big(\{\nu\}\times [n_{\nu}]\big).
\end{equation}
Then, the graph associated to $F\in \NB^d$ is given by the following:
\begin{defi}\label{graphdefi}
For $F\in\NB^d$, $\G(F)=(\I,\E)$ is the directed graph with node set $\I$ and such that there is an edge from $(k,l_k)$ to $(i,j_i)$, i.e.\ $\big((k,l_k),(i,j_i)\big)\in\E$, if
\begin{equation*}
\lim_{t\to\infty} F_{k,l_k}\big(\bu^{(i,j_i)}(t)\big) =\infty.
\end{equation*}
\end{defi}

For example, note that if $F(\bx)=M\bx$, for some nonnegative matrix $M\in\R^{n\times n}$, then $\G(F)=(\{1\}\times [n],\E)$ is the graph with $M$ as adjacency matrix. Furthermore, if $G(\bx,\by)=(M\by,M^\top\bx)$ for a nonnegative matrix $M\in\R^{m\times n}$, then $\G(G)=((\{1\}\times[m])\times(\{2\}\times[n]),\E)$ is the bipartite graph with adjacency matrix $\begin{psmallmatrix}0 & M^\top \\ M & 0\end{psmallmatrix}$.

With this definition, we prove the following Theorem \ref{existence_intro}, which generalizes Theorem 2 in \cite{Gaubert}. 
To this end, we proceed as in Section 3.2 of \cite{Gaubert}. 

Let $F\in\NB^d$, $\bb\in\R^d_{++}$, $\G(F)=(\I,\E)$ and, for $r>0$, define
\begin{equation*}
\Psi(r)=\sup\bigg\{ t\geq 0 \ \bigg| \min_{\substack{((i,j_i),(k,l_k))\in\E\\ (a_1,\ldots,a_d)\in\J}} F_{i,j_i}\big(\bu^{(k,l_k)}(t)\big)^{b_i}\prod_{\substack{s=1\\ s\neq i}}^dF_{s,a_s}\big(\bu^{(k,l_k)}(t)\big)^{b_s}\leq r\bigg\}.
\end{equation*}
Note that, by definition of $\G(F)$, $\Psi(r)<\infty$ for any $r>0$ and $\Psi$ is an increasing function. Moreover, note that $\Psi$ has the following property: 

Let $(j_1,\ldots,j_d)\in\J$, $i\in[d]$, $(k,l_k)\in\I$ and $t>0$, if $\big((i,j_i),(k,l_k)\big)\in\E$, then
\begin{equation}\label{psimagic}
\prod_{s=1}^d F_{s,j_s}\big(\bu^{(k,l_k)}(t)\big)^{b_s}\leq r \qquad \text{implies}\qquad t\leq \Psi(r).
\end{equation}

We have 
\begin{thm}\label{existence_intro}
Let $F\in\NB^d$ and $\bb\in\Dn$ be such that $\A(F)^\top\bb=\bb$. If for all $\nu\in[d]$, $l_{\nu}\in[n_\nu]$ and $(j_1,\ldots,j_d)\in\J$, there exists $i_{\nu}\in[d]$ so that there is a path from $(i_\nu,j_{i_{\nu}})$ to $(\nu,l_{\nu})$ in $\G(F)$, then $F$ has an eigenvector in $\Sn_{++}$.
\end{thm}
\begin{proof}
Similarly to the proof of Theorem 6.2.3 \cite{NB}, for the case $d=1$, here we show that for a nonexpansive mapping $F\in\NB^d$ with $d\geq 1$, the assumption on $\G(F)$ in the statement is such that, if the maximal eigenvector $\bx\in\kone_{+,0}$ of $F$ has a zero entry, then the whole vector $\bx_i$ is zero, for some $i\in[d]$, contradicting $\bx\in\kone_{+,0}$.

By Theorem \ref{BIGTHM}, there exists a sequence $(\blam^{(\epsilon_k)},\bxb^{(\epsilon_k)})_{k=1}^{\infty}\subset \R^d_{++}\times \Sn_{++}$ such that 
$\lim_{k\to\infty}(\blam^{(\epsilon_k)},\bxb^{(\epsilon_k)}) =(\blam,\bxb^*) $ and $ F(\bxb^*)=\blam \krog \bx^*\in\Sn_{+}.$
Since $\blam^{(\epsilon_k)}\to\blam$, there exists a constant $M_0>0$ such that 
\begin{equation}\label{defM0}
\prod_{s=1}^d(\lambda_s^{(\epsilon_k)})^{b_s}\leq M_0 \qquad \forall k \in \N.
\end{equation}
Suppose by contradiction that $\bxb^*\in\Sn_{+}\setminus\Sn_{++}$. By taking a subsequence if necessary, we may assume that there exists $(j_1,\ldots,j_d)\in\J$ and $\omega\in[d]$ such that $\min_{t_s\in[n_s]}x_{s,t_s}^{(\epsilon_k)}=x^{(\epsilon_k)}_{s,j_s}$, $\forall s\in[d], k \in\N$, and $\lim_{k\to\infty} x^{(\epsilon_k)}_{\omega,j_\omega}=x_{\omega,j_\omega}^*=0$.
By the compactness of $\Sn_+$, there exists $\t C>0$ such that $y_{s,t_s}\leq \t C$ for all $\by\in \Sn_+$. It follows that
\begin{equation}\label{contra1}
0\leq \lim_{k\to \infty} \prod_{s=1}^d(x^{(\epsilon_k)}_{s,j_s})^{b_s} \leq \t C^{1-b_{\omega}}\lim_{k\to\infty} (x^{(\epsilon_k)}_{\omega,j_\omega})^{b_\omega}=0.
\end{equation}
Since $\bx^*\in\Sn_{+}$, there exists $(l_1,\ldots,l_d)\in\J$ with $x_{s,l_s}^*>0$ for all $s\in[d]$. Thus,
\begin{equation}\label{contra2}
\lim_{k\to \infty} \prod_{s=1}^d(x^{(\epsilon_k)}_{s,l_s})^{b_s} =\prod_{s=1}^d(x^*_{s,l_s})^{b_s}>0.
\end{equation}
Let $\nu\in[d]$, by assumption on $\G(F)$, there exists $i_{\nu}\in[d]$ and a path $(i_\nu,j_{i_\nu})=(m_1,\xi_{m_1})\to (m_2,\xi_{m_2})\to\ldots\to (m_{N_{\nu}},\xi_{m_{N_{\nu}}})=(\nu,l_\nu)$ in $\G(F)$ with $N_{\nu}\leq n_1+\ldots+n_d$. Define $\bi{(1)}, \bi{(2)},\ldots,\bi{(N_{\nu})}\in\J$ as
\begin{equation*}
\bi_s{(a)}=\begin{cases} \xi_{m_a}&\text{if } s= m_a, \\ j_s &\text{otherwise.}\end{cases} \qquad \forall s \in [d], \ a\in[N_{\nu}].
\end{equation*}
Fix $k\in\N$ and let $t=x_{m_2,\xi_{m_2}}^{(\epsilon_k)}/x_{m_2,j_{m_2}}^{(\epsilon_k)}$ and $\bal= \big((x^{(\epsilon_k)}_{1,j_1})^{-1},\ldots,(x^{(\epsilon_k)}_{d,j_d})^{-1}\big)$. \red{Note that 
$\bal$ is well defined since Theorem \ref{BIGTHM} ensures that $\bx^{(\epsilon_k)}\in\Sn_{++}$ for all $k$.} We have $\bu^{(m_2,\xi_{m_2})}(t)\lek \bal\krog\bx^{(\epsilon_k)}$ and thus $F(\bu^{(m_2,\xi_{m_2})}(t))\lek F(\bal\krog\bx^{(\epsilon_k)})$. Furthermore, as $F(\by)\lek F^{(\epsilon_k)}(\by)$ for all $\by\in \kone_+$, it holds $F(\bal\krog\bx^{(\epsilon_k)})\lek F^{(\epsilon_k)}(\bal\krog\bx^{(\epsilon_k)})$. It follows that
\begin{align}\label{derrrrr}
(x_{m_1,j_{m_1}}^{(\epsilon_k)})^{b_{m_1}}&\prod_{s=1}^dF_{s,\bi_s(1)}\big(\bu^{(m_2,\xi_{m_2})}(t)\big)^{b_s}\leq (x_{m_1,j_{m_1}}^{(\epsilon_k)})^{b_{m_1}}\prod_{s=1}^dF_{s,\bi_s{(1)}}(\bal\krog\bx^{(\epsilon_k)})^{b_s}\notag\\
&\!\!\!\!\!\!\!\!\!\!\!\!\!\!\!\!\!\!\!\!\!\!\!\!\!\!\!\!\!\!\!\!\!\leq (x_{m_1,j_{m_1}}^{(\epsilon_k)})^{b_{m_1}}\prod_{s=1}^dF^{(\epsilon_k)}_{s,\bi_s{(1)}}(\bal\krog\bx^{(\epsilon_k)})^{b_s}=\bigg(\prod_{\substack{s=1,\\ s \neq m_1}}^d (x^{(\epsilon_k)}_{s,j_s})^{b_s}\bigg)^{-1}\prod_{s=1}^dF^{(\epsilon_k)}_{s,\bi_s{(1)}}(x^{(\epsilon_k)})^{b_s}\notag\\
&\!\!\!\!\!\!\!\!\!\!\!\!\!\!\!\!\!\!\!\!\!\!\!\!\!\!\!\!\!\!\!\!\!=\bigg(\prod_{\substack{s=1,\\ s \neq m_1}}^d (x^{(\epsilon_k)}_{s,j_s})^{b_s}\bigg)^{-1}\prod_{s=1}^d(\lambda_s^{(\epsilon_k)}\bx_{s,\bi_s{(1)}}^{(\epsilon_k)})^{b_s}
=(x_{m_1,j_{m_1}}^{(\epsilon_k)})^{b_{m_1}}\prod_{s=1}^d(\lambda_s^{(\epsilon_k)})^{b_s}\notag\\ 
&\phantom{\!\!\!\!\!\!\!\!\!\!\!\!\!\!\!\!\!\!\!\!\!\!\!\!\!\!\!\!\!\!\!\!\!=\prod_{\substack{s=1,\\ s \neq m_1}} (x^{(\epsilon_k)}_{s,j_s})^{b_s}\prod_{s=1}(\lambda_s^{(\epsilon_k)}\bx_{s,\bi_s{(1)}}^{(\epsilon_k)})^{b_s}}\qquad\quad\leq (x_{m_1,j_{m_1}}^{(\epsilon_k)})^{b_{m_1}}M_0,
\end{align}
where $M_0>0$ satisfies \eqref{defM0}. Hence, by \eqref{psimagic}, $t=x_{m_2,\xi_{m_2}}^{(\epsilon_k)}/x_{m_2,j_{m_2}}^{(\epsilon_k)} \leq \Psi(M_0)$ and
\begin{equation*}
\prod_{s=1}^d (x^{(\epsilon_k)}_{s,\bi_s(2)})^{b_s} \leq  M_1\prod_{s=1}^d (x_{s,j_s}^{(\epsilon_k)})^{b_s}\qquad \text{with} \qquad M_1 =\Psi(M_0)^{b_{m_2}}.
\end{equation*}
Applying this procedure again to $ (m_3,\xi_{m_3})$, we get the existence of a constant $M_2>0$ independent of $k$, such that 
\begin{equation*}
\prod_{s=1}^d (x^{(\epsilon_k)}_{s,\bi_s(3)})^{b_s} \leq  M_2\prod_{s=1}^d (x^{(\epsilon_k)}_{s,j_s})^{b_s}.
\end{equation*}
Indeed, let $t=x^{(\epsilon_k)}_{m_3,\xi_{m_3}}/x_{m_3,j_{m_3}}^{(\epsilon_k)}$, then $\bu^{(m_3,\xi_{m_3})}(t)\lek \bal\krog\bx^{(\epsilon_k)}$ and, similarly to \eqref{derrrrr}, we get
\begin{align*}
&(x_{m_2,j_{m_2}}^{(\epsilon_k)})^{b_{m_2}}\prod_{s=1}^dF_{s,\bi_s(2)}\big(\bu^{{(m_3,\xi_{m_3})}}(t)\big)^{b_s}\leq (x_{m_2,j_{m_2}}^{(\epsilon_k)})^{b_{m_2}}\prod_{s=1}^dF_{s,\bi_s{(2)}}(\bal\krog\bx^{(\epsilon_k)})^{b_s}\\
&  \leq (x_{m_2,j_{m_2}}^{(\epsilon_k)})^{b_{m_2}}\prod_{s=1}^dF^{(\epsilon_k)}_{s,\bi_s{(2)}}(\bal\krog\bx^{(\epsilon_k)})^{b_s}=\bigg(\prod_{\substack{s=1,\\ s \neq m_2}}^d (x^{(\epsilon_k)}_{s,j_s})^{b_s}\bigg)^{-1}\prod_{s=1}^dF^{(\epsilon_k)}_{s,\bi_s{(2)}}(\bx^{(\epsilon_k)})^{b_s}\\
&  =\!\! \bigg(\!\prod_{\substack{s=1,\\ s \neq m_2}}^d (x^{(\epsilon_k)}_{s,j_s})^{b_s}\!\bigg)^{\!\!\!-1}\!\!\prod_{s=1}^d\big(\lambda_sx^{(\epsilon_k)}_{s,\bi_s{(2)}}\big)^{b_s}\!=\!(x^{\epsilon_k}_{m_2,\xi_{m_2}})^{b_{m_2}}\!\prod_{s=1}^d(\lambda_s^{(\epsilon_k)})^{b_s} \! \leq \!  (x^{(\epsilon_k)}_{m_2,j_{m_2}})^{b_{m_2}} M_1M_0.
\end{align*}
Hence, with $M_2=\Psi(M_0M_1)^{b_{m_3}}$, we get $(x_{m_3,\xi_{m_3}}^{(\epsilon_k)}/x_{m_3,j_{m_3}}^{(\epsilon_k)})^{b_{m_3}} \leq M_2$ which implies the desired inequality. Repeating this process at most $N_{\nu}$ times, we obtain $C_{\nu}>0$ independent of $k$, such that 
\begin{equation}\label{onthewayeqnnnn}
(x^{(\epsilon_k)}_{\nu,l_{\nu}})^{b_\nu}\prod_{\substack{s=1\\ s\neq \nu}}^d (x^{(\epsilon_k)}_{s,j_s})^{b_s}=\prod_{s=1}^d (x^{(\epsilon_k)}_{s,\bi_s(N_{\nu})})^{b_s} \leq  C_{\nu}\prod_{s=1}^d (x^{(\epsilon_k)}_{s,j_s})^{b_s} \qquad \forall k\in\N.
\end{equation}
Taking the product over $\nu\in[d]$ in \eqref{onthewayeqnnnn} and dividing by $\prod_{s=1}^d (x^{(\epsilon_k)}_{s,j_s})^{(d-1)b_s}$ shows
\begin{equation*}
\prod_{\nu=1}^d(x^{(\epsilon_k)}_{\nu,l_{\nu}})^{b_\nu}\leq C\prod_{s=1}^d (x^{(\epsilon_k)}_{s,j_s})^{b_s} \qquad\forall k\in\N,
\end{equation*}
where $C=\prod_{\nu=1}^dC_{\nu}.$ Finally, using \eqref{contra1} and \eqref{contra2} we get a contradiction.
\end{proof}

As noted in Corollary 6.2.4 \cite{NB} for the case $d=1$, there exists a dual version of Theorem \ref{existence_intro} which follows by considering the mapping $\tau \colon \R^{N}_{++}\to \R_{++}^N$ defined as $\tau(\bz)=(z_1^{-1},\ldots,z_N^{-1})$ with $N=n_1+\ldots+n_d$. 
More precisely, let $F\in\NB^d$ and define $\hat F\colon\kone_{++}\to\kone_{++}$ as 
$\hat F(\bx)=\tau\big(F(\tau(\bx))\big)$ for all $\bx\in\kone_{++}.$
Then, $\tau$ is a bijection between the positive eigenvectors of $F$ and $\hat F$. Moreover, by Theorem \ref{extend}, $\hat F$ can be continuously extended on $\kone_{+}$ so that $\hat{F}\in\NB^d$. Note that if $\bu^{(k,j_k)}(t)$ is defined as in \eqref{defu}, and $\G(\hat F)=(\I,\hat E)$, then we have $\big((k,l_k),(i,j_i)\big)\in\hat\E$, if and only if
$F_{k,l_k}\big(\bu^{(i,j_i)}(t)\big)\to 0$ as $t\to 0$. The following corollary is a direct consequence of Theorem \ref{existence_intro} applied to $\hat F$. 
\begin{cor}\label{existdual}
Let $F\in\NB^d$, $A=\A(F)$ and $\bb\in\Dn$ with $A^\top\bb=\bb$. Let $\hat F$ be defined as above. Suppose that, for every $(\nu,l_\nu)\in\I$ and $(j_1,\ldots,j_d)\in\J$ there exists $i_\nu\in[d]$ such that there is a path from $(i_\nu,j_{i_\nu})$ to $(\nu,l_\nu)$ in $\G(\hat F)$. Then $F$ has an eigenvector in $\Sn_{++}$.
\end{cor}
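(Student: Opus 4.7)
The plan is a one-step reduction: apply Theorem \ref{existence_intro} to the conjugate mapping $\hat F$ defined in the paragraph preceding the corollary, and then pull back the resulting positive eigenvector via $\tau$. The construction of $\hat F$ as an element of $\NB^d$ with $\A(\hat F)=A$, together with the fact that $\tau$ provides a bijection between the positive eigenvectors of $F$ and those of $\hat F$, has already been recorded before the statement.

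First I would verify that the hypotheses of Theorem \ref{existence_intro} hold for $\hat F$: the identity $\A(\hat F)^\top\bb=A^\top\bb=\bb$ holds by assumption, and the path-connectivity condition on $\G(\hat F)$ is exactly what the corollary assumes. Theorem \ref{existence_intro} then yields an eigenvector $\by\in\Sn_{++}$ of $\hat F$, so $\hat F(\by)=\bmu\krog\by$ for some $\bmu\in\R^d_{++}$, where positivity of $\bmu$ follows from the non-degeneracy $\hat F(\kone_{++})\subset\kone_{++}$, which forces $\hat F(\by)\in\kone_{++}$.

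Next I would pull this back to $F$ by setting $\bx=\tau(\by)\in\kone_{++}$. Using that $\tau\circ\tau$ is the identity on $\kone_{++}$, we obtain
\begin{equation*}
F(\bx)=\tau\bigl(\hat F(\tau(\bx))\bigr)=\tau\bigl(\hat F(\by)\bigr)=\tau(\bmu\krog\by)=\bmu^{-1}\krog\tau(\by)=\bmu^{-1}\krog\bx,
\end{equation*}
so $\bx$ is a positive eigenvector of $F$ with eigenvalue $\bmu^{-1}\in\R^d_{++}$. Finally, eigenvectors of $F\in\NB^d$ can be rescaled block-wise (as noted after Definition \ref{eigenvectordef}), so dividing each block of $\bx$ by its $\norm{\cdot}_{\gamma_i}$-norm yields an eigenvector in $\Sn_{++}$, as required.

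Since everything is a direct application of the already-established reduction to $\hat F$, there is no substantial obstacle. The only preparatory point, which is already built into the setup of $\hat F$ preceding the corollary, is the identification of $\hat F$ with an element of $\NB^d$; this follows from Theorem \ref{extend} via $A\R^d_{++}\subset\R^d_{++}$ (so each row of $A$ has a positive entry) and $A^\top\bb\leq\bb$, both guaranteed by the hypothesis $A^\top\bb=\bb$.
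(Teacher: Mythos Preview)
Your proposal is correct and matches the paper's approach exactly: the paper states that the corollary ``follows directly from Theorem \ref{existence_intro} applied to $\hat F$'', and you have supplied precisely those details, including the pullback via $\tau$ and the final normalization. The only cosmetic point is that the component-wise inverse you write as $\bmu^{-1}$ is denoted $\bmu^{-I}$ in the paper's notation.
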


We conclude with some important observations. First note that, when $d=1$, the graph of Definition \ref{graphdefi} coincides with the one proposed in \cite{Gaubert} and our existence Theorem \ref{existence_intro} coincides with Theorem 2 \cite{Gaubert} where the graph is required to be strongly connected. However, when $d>1$, there are mappings having a graph which is not strongly connected but satisfy the assumptions of Theorem \ref{existence_intro}. Such a mapping is discussed in the following example.
\begin{ex}\label{nonirrgraph}
Let $n_1=n_2=2$ and $F\in \NB^2$ with
\begin{equation*}
F((s,t),(u,v))=\Big(\begin{pmatrix}\min\{su,sv\}^{1/4}\\\min\{tu,tv\}^{1/4}\end{pmatrix},\begin{pmatrix}\max\{su,tv\}^{1/4}\\ \max\{sv,tu\}^{1/4}\end{pmatrix}\Big) 
\end{equation*}
Then, $F$ has homogeneity matrix $A=\tiny{\tfrac{1}{4}\begin{pmatrix} 1& 1 \\ 1 & 1\end{pmatrix}}$ and $F(\ones,\ones)=(\ones,\ones)$ and the graphs $\G(F),\G(\hat F)$ are given by
\begin{equation*}
\begin{array}{c}
\begin{tikzpicture}[baseline= (a).base]
\node[scale=.8] (a) at (0,0){
\begin{tikzcd}[cells={nodes={circle,draw,font=\sffamily\Large\bfseries}},thick]
 \arrow[loop left, distance=3em, start anchor={[yshift=-1.5ex]west}, end anchor={[yshift=1.5ex]west},thick]{}{} s  
 &\arrow[loop right, distance=3em, start anchor={[yshift=1.5ex]east}, end anchor={[yshift=-1.5ex]east}, thick]{}{}t\\
 u \ar[thick]{r} \ar[thick]{ru}\arrow[loop left, distance=3em, start anchor={[yshift=-1.5ex]west}, end anchor={[yshift=1.5ex]west},thick]{}{} \ar[thick]{u}
  & v \arrow[thick]{u}\ar[thick]{l} \ar[thick]{lu}\arrow[loop right, distance=3em, start anchor={[yshift=1.5ex]east}, end anchor={[yshift=-1.5ex]east}, thick]{}{}
\end{tikzcd}
};
\end{tikzpicture}\\
\G(F)\end{array}
\qquad \begin{array}{c}\begin{tikzpicture}[baseline= (a).base]
\node[scale=.8] (a) at (0,0){
	\begin{tikzcd}[cells={nodes={circle,draw,font=\sffamily\Large\bfseries}},thick]
	\arrow[loop left, distance=3em, start anchor={[yshift=-1.5ex]west}, end anchor={[yshift=1.5ex]west},thick]{}{} s  \ar[thick]{rd}\ar[thick]{d} 
	&\arrow[loop right, distance=3em, start anchor={[yshift=1.5ex]east}, end anchor={[yshift=-1.5ex]east}, thick]{}{}t \arrow[thick]{d}\ar[thick]{ld}\\
	u  
	& v
	\end{tikzcd}
};
\end{tikzpicture}
\\
\G(\hat F)\end{array}
\end{equation*}
In particular, note that neither $\G(F)$ nor $\G(\hat F)$ is strongly connected but $\G(F)$ satisfies the assumptions of Theorem \ref{existence_intro}. Furthermore, by replacing the $\min$'s by $\max$'s and the $\max$'s by $\min$'s in the definition of $F$, we obtain a mapping $H$ such that $\G(F)=\G(\hat H)$ and $\G(\hat F)=\G(H)$. $H$ is then an example of mapping such that neither $\G(H)$ nor $\G(\hat H)$ is strongly connected but $\G(\hat H)$ satisfies the assumptions of Corollary~\ref{existdual}.  
\end{ex}
Finally, let us note that unlike the linear case, the assumption that $\G(F)$ is strongly connected does not imply the uniqueness of positive eigenvectors. This is shown by the following example 
\begin{ex}\label{nonuniquevect}
Let $\epsilon \in (0,1)$, $d=1$, $n_1=3$ and $F\in \NB^d$ with
$$F(a,b,c)=\big(\max(a,b,c), \max(\epsilon a,b), \max(\epsilon b,c)\big).$$
Then, $\G(F)$ is strongly connected and $(1,b,c)$ is an eigenvector of $F$ for all $b,c\in [\epsilon,1]$.
\end{ex}

\section{Maximality and uniqueness of positive eigenvectors}\label{CW_M_U}
Theorems \ref{Banachcombi} and \ref{existence_intro} provide sufficient conditions for the existence of a positive eigenvector. In the linear case, it is known that the eigenvalue associated to a positive eigenvector of a nonnegative matrix always coincides with its spectral radius. This can be deduced by the Collatz-Wielandt formula. A generalization of this characterization to the spectral radius of nonexpansive mappings in $\NB^1$ can be found in Theorem 5.6.1 \cite{NB} and Theorem 1 \cite{GV12}. In the context of nonnegative multi-linear forms, Collatz-Wielandt formulas were established for different types of spectral problems \cite{Fried, us, Yang2}. By combining techniques from the proofs of Theorem 5.6.1 in \cite{NB} and of Theorem 1 in \cite{us}, we obtain the following Collatz-Wielandt characterization of the spectral radius for mappings in $\NB^d$:
\begin{thm}\label{CW_intro}
Let $F\in\NB^d$, $A=\A(F)$ and $\bb\in\Dn$. If either $A^\top\bb = \bb$ or $\rho(A)<1$ and $(A^\top\bb)_i\leq b_i$ for every $i\in[d]$, then
\begin{equation} \label{CWeq1intro}
\inf_{\bu\in\Sn_{++}}\prod_{i=1}^d \bmaxi{i}{F(\bu)}{\bu}^{b_i} = r_{\bb}(F) = \max_{\bv\in\Sn_{+}}\prod_{i=1}^d \bmini{i}{F(\bv)}{\bv}^{b_i}.
\end{equation}
\end{thm}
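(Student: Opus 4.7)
The plan is to establish three statements: (i) $r_\bb(F) \leq \inf_{\bu\in\Sn_{++}}\prod_i \bmaxi{i}{F(\bu)}{\bu}^{b_i}$; (ii) $\max_{\bv\in\Sn_{+}}\prod_i \bmini{i}{F(\bv)}{\bv}^{b_i} \leq r_\bb(F)$; and (iii) the maximum is attained. I interpret $\bmini{i}{F(\bv)}{\bv}$ on $\Sn_+$ as $\min_{j_i:v_{i,j_i}>0}F_{i,j_i}(\bv)/v_{i,j_i}$, and write $\bc:=\Bmax{F(\bu)}{\bu}$, $\bt:=\Bmin{F(\bv)}{\bv}$.

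The engine of (i) is a single-step inequality: given any eigenpair $(\blam,\bv)\in\R^d_+\times\Sn_+$ of $F$ and any $\bu\in\Sn_{++}$, set $\bs=\Bmax{\bv}{\bu}$ so that $\bv\lek\bs\krog\bu$. Order-preservation and multi-homogeneity give $\blam\krog\bv = F(\bv) \lek F(\bs\krog\bu) = \bs^A\krog F(\bu) \lek \bs^A\krog\bc\krog\bu$. Maximizing the componentwise inequality $\lambda_i v_{i,j_i}\leq(\bs^A)_i c_i u_{i,j_i}$ over $j_i$ yields $\lambda_i s_i\leq(\bs^A)_i c_i$, and the $\bb$-weighted geometric mean produces
\begin{equation*}
\prod_i \lambda_i^{b_i}\prod_k s_k^{b_k-(A^\top\bb)_k}\leq \prod_i c_i^{b_i}.
\end{equation*}
The crucial observation is that $s_k\geq 1$: from $\bv_k\lec s_k\bu_k$ componentwise, monotonicity of $\norm{\cdot}_{\gamma_k}$ gives $1=\norm{\bv_k}_{\gamma_k}\leq s_k\norm{\bu_k}_{\gamma_k}=s_k$. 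Since $b_k-(A^\top\bb)_k\geq 0$ under either hypothesis, $\prod_k s_k^{b_k-(A^\top\bb)_k}\geq 1$ and thus $\prod_i\lambda_i^{b_i}\leq\prod_i c_i^{b_i}$. Taking the supremum over eigenpairs in $\Sn_+$ proves (i).

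For (ii), in Case A ($A^\top\bb=\bb$) the iterated inequality $\bt^{B_m}\krog\bv\lek F^m(\bv)$ with $B_m=I+A+\cdots+A^{m-1}$ and $(B_m^\top\bb)_k=m b_k$ is precisely Proposition \ref{Prop536}, which gives $\prod_k t_k^{b_k}\leq \bar r_\bb(F)=r_\bb(F)$, the last equality from Theorem \ref{weak_PF}. In Case B ($\rho(A)<1$), the unique positive eigenvector $\bx^*\in\Sn_{++}$ from Theorem \ref{Banachcombi} anchors the proof: the single-step argument of (i) with $\bu=\bx^*$ (so $\bc=\blam^*$) applied to any eigenpair $(\blam,\bv)$ gives $\prod_k\lambda_k^{b_k}\leq\prod_k(\lambda^*_k)^{b_k}$, whence $r_\bb(F)=\prod_k(\lambda^*_k)^{b_k}$; the symmetric dual one-step argument, starting from $\bt\krog\bv\lek F(\bv)\lek \bs^A\krog\blam^*\krog\bx^*$ with $\bs=\Bmax{\bv}{\bx^*}$, then yields $\prod_k t_k^{b_k}\leq\prod_k(\lambda^*_k)^{b_k}=r_\bb(F)$.

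Attainment (iii) is immediate: in Case A Theorem \ref{weak_PF} provides $(\blam^*,\bv^*)\in\R^d_+\times\Sn_+$ with $\prod_k(\lambda^*_k)^{b_k}=r_\bb(F)$ and $\bmini{k}{F(\bv^*)}{\bv^*}=\lambda^*_k$, so substituting $\bv=\bv^*$ achieves the max; in Case B the vector $\bx^*$ from Theorem \ref{Banachcombi} plays the same role. The main obstacle is Case B, where the Bonsall/cone-spectral-radius machinery of Theorem \ref{weak_PF} does not apply directly; the resolution combines the single-step inequality with the norm-monotonicity lemma $s_k\geq 1$ and uses the Banach-contraction eigenvector $\bx^*$ as the anchor for both (i) and (ii), which is what uniformly handles the $b_k-(A^\top\bb)_k$ discrepancy between the two cases.
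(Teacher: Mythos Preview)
Your three items (i), (ii), (iii) do not add up to the full theorem: together they give $\max=r_{\bb}(F)\leq\inf$, but you never prove the reverse inequality $\inf_{\bu\in\Sn_{++}}\prod_i\bmaxi{i}{F(\bu)}{\bu}^{b_i}\leq r_{\bb}(F)$. In Case~B this is implicitly patched, since the positive eigenvector $\bx^*\in\Sn_{++}$ from Theorem~\ref{Banachcombi} satisfies $\prod_i\bmaxi{i}{F(\bx^*)}{\bx^*}^{b_i}=\prod_i(\lambda^*_i)^{b_i}=r_{\bb}(F)$, so the infimum is attained. But in Case~A ($A^\top\bb=\bb$) there need not exist any positive eigenvector, and your $\bv^*$ from Theorem~\ref{weak_PF} lies only in $\Sn_+$, so it cannot be plugged into the infimum over $\Sn_{++}$. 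The paper handles exactly this case by invoking the perturbed maps $F^{(\delta_k)}$ of Theorem~\ref{BIGTHM}: each $F^{(\delta_k)}$ has a positive eigenvector $\bx^{(\delta_k)}\in\Sn_{++}$, and since $F\lek F^{(\delta_k)}$ one has $\cwu(F,\bx)\leq\cwu(F^{(\delta_k)},\bx)$, whence
\[
\inf_{\bx\in\Sn_{++}}\cwu(F,\bx)\leq\inf_{\bx\in\Sn_{++}}\cwu(F^{(\delta_k)},\bx)=r_{\bb}(F^{(\delta_k)})\to r_{\bb}(F).
\]
This approximation step is the genuine content of the left equality in the non-expansive case, and your plan omits it.

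A separate remark: your single-step argument for (i), comparing an arbitrary eigenpair $(\blam,\bv)$ against an arbitrary test point $\bu\in\Sn_{++}$ via $\bs=\Bmax{\bv}{\bu}$ and the norm-monotonicity bound $s_k\geq 1$, is more elementary than the paper's route for Case~A (which iterates $F^k$ and appeals to Proposition~\ref{Prop536}). It also handles both hypotheses at once, since only $A^\top\bb\leq\bb$ is used. This is a nice unification and is essentially how the paper treats Case~B; but it does not by itself close the gap above.
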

The proof of this result is postponed to the end of Subsection \ref{CW_M_U1}.

In particular, we note that if $d=1$ and $F$ is linear, then the left hand side of \eqref{CWeq1intro} reduces to the classical Collatz-Wielandt formula for nonnegative matrices . 

Our next result is concerned with the simplicity of the positive eigenvector of a multi-homogeneous mappings and its eigenvalue. In the linear case, it is known that every nonnegative irreducible matrix has a unique real eigenvector corresponding to its spectral radius and this vector must have positive entries. We have seen in Theorem \ref{existence_intro} a possible way to generalize the notion of irreducibility to mappings in $\NB^d$ which ensures existence of a positive eigenvector. However, as shown in Example \ref{nonuniquevect}, already in the case $d=1$, this assumption does not guarantee  that this positive eigenvector is unique in $\Sn_{++}$. This suggests that the notion of irreducibility needs to be generalized in a different way in order to obtain uniqueness results. A possible approach is proposed in Theorem 2.5 \cite{Nussb} and Theorem 6.1.7 \cite{NB}, which have assumptions on the derivative of the mapping. More precisely, let $F\in\NB^1$ be such that $\A(F)=1$, $F$ has a positive eigenvector $\bu\in\Sn_{++}$ and $F$ is differentiable at $\bu\in\S_{++}$. Recall that $DF(\bu)$ denotes the Jacobian of $F$ at $\bu$. If $DF(\bu)$ is irreducible, then Theorem 2.5 \cite{Nussb} implies that $\bu$ is the unique eigenvector of $F$ in $\Sn_{++}$ and Theorem 6.1.7 \cite{NB} implies that for any eigenvector $\bv\in\Sn_{+}\setminus\Sn_{++}$ with $F(\bv)=\theta\bv$ we have $\theta<r_1(F)$. The combination of these results can therefore be interpreted as a result on the simplicity of the spectral radius. Indeed the first one implies that the positive eigenvector is unique and the second one implies that the spectral radius of $F$ can only be attained by a positive eigenvector. The following theorem generalizes the results above to the multi-homogeneous setting.
\begin{thm}\label{simplicity}
Let $F\in\NB^d$, $A=\A(F)$ and $\bb\in\Dn$. Suppose that $F$ has a positive eigenvector $\bu\in\Sn_{++}$. Then, $\bu$ is the unique eigenvector of $F$ in $\Sn_{++}$ if either $\rho(A)<1$ or $A^\top\bb=\bb$, $F$ is differentiable at $\bu$ and $DF(\bu)$ is irreducible. Furthermore, suppose that $F$ has an eigenvector $\bv\in\S_{+}\setminus\S_{++}$ and let $\bt\in\R^d_{+}$ be such that $F(\bv)=\bt\krog\bv$. If $A^\top\bb=\rho(A)\bb$, $F$ is differentiable at $\bu$ and $DF(\bu)$ is irreducible, then $\prod_{i=1}^d\theta_i^{b_i}<r_{\bb}(F)$.
\end{thm}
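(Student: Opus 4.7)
The plan is to split Part 1 into its two alternative hypotheses and to treat Part 2 via a single Collatz--Wielandt style comparison, with both parts closing via the irreducibility of $DF(\bu)$. Under $\rho(A)<1$ there is essentially nothing to do: Theorem \ref{Banachcombi} already gives uniqueness of the positive eigenvector up to rescaling in each block, and normalisation to $\Sn_{++}$ removes that ambiguity. Under $A^\top\bb=\bb$ with $DF(\bu)$ irreducible, I would suppose for contradiction a second eigenvector $\bu'\in\Sn_{++}$, $F(\bu')=\blam'\krog\bu'$. The hypothesis $A^\top\bb=\bb$ makes the Lipschitz constant in Lemma \ref{contract} equal to one, while the per-block scale invariance of the Hilbert metric forces $\mu_{\bb}(F(\bu),F(\bu'))=\mu_{\bb}(\blam\krog\bu,\blam'\krog\bu')=\mu_{\bb}(\bu,\bu')$. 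Consequently every inequality in the proof of Lemma \ref{contract} is an equality, so that with $\bs=\Bmax{\bu'}{\bu}$ and $\bt=\Bmin{\bu'}{\bu}$ the sandwich $\bt^A\krog F(\bu)\lek F(\bu')\lek\bs^A\krog F(\bu)$ collapses to tangency at the indices where the coordinates of $\bs$ and $\bt$ are attained. Combining this with the differentiability of $F$ at $\bu$ and the multi-homogeneous Euler identity of Lemma \ref{Eulerthm}, the tangency can be transported into a first-order equality at $\bu$ that identifies a nonnegative nonzero vector whose zero pattern is preserved by $DF(\bu)$. Irreducibility of $DF(\bu)$ then excludes any such zero pattern and forces $\bu=\bu'$.

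For Part 2 I would again argue by contradiction, assuming $\prod_i\theta_i^{b_i}=r_{\bb}(F)=\prod_i\lambda_i^{b_i}$. Setting $\bs=\Bmax{\bv}{\bu}\in\R^d_{++}$ gives $\bv\lek\bs\krog\bu$, and applying $F$ with the multi-homogeneity yields $t_iv_{i,j_i}\leq(\bs^A)_i\lambda_iu_{i,j_i}$ for all $(i,j_i)\in\I$. Taking the $\bb$-weighted geometric mean over $(j_1,\ldots,j_d)\in\J$ and using $A^\top\bb=\rho(A)\bb$ leads to $\prod_i\theta_i^{b_i}\leq S^{\rho(A)-1}\prod_i\lambda_i^{b_i}$, where $S=\prod_ks_k^{b_k}\geq 1$ by monotonicity of the $\norm{\cdot}_{\gamma_i}$ together with $\bv,\bu\in\Sn_+$. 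When $\rho(A)<1$, the assumed equality forces $S=1$, hence $\bs=\ones$ and $\bt=\blam$; when $\rho(A)=1$, equality in the geometric mean forces $t_is_i=(\bs^A)_i\lambda_i$ directly for each $i\in[d]$. In both cases the nonnegative vector $F(\bs\krog\bu)-F(\bv)=\bs^A\krog\blam\krog\bu-\bt\krog\bv$ vanishes precisely at the indices attaining each $s_i$, while it is strictly positive at the indices where $\bv$ vanishes. A first-order expansion of $F$ about $\bu$, coupled with Lemma \ref{Eulerthm} to relate $\bs\krog\bu$ to $\bu$, then produces a nonzero nonnegative vector annihilated row-wise by $DF(\bu)$ at the zero indices; the Perron--Frobenius property of the irreducible matrix $DF(\bu)$ forbids this and completes the argument.

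The main obstacle in both parts is that $F$ is assumed differentiable only at $\bu$, whereas the comparison points $\bu'$ and $\bv$ may lie far from $\bu$ or, in Part 2, on the boundary of $\Sn_+$. Converting a global equality into a tangential statement at $\bu$, where the irreducibility of $DF(\bu)$ can be activated, is the technical crux. The cleanest route I foresee is to use the multi-homogeneous rescaling $F(\bal\krog\bx)=\bal^A\krog F(\bx)$ to pull the forced equalities or vanishings back into a neighbourhood of $\bu$ without losing the zero or tangency pattern, and then to read off from the first-order expansion of $F$ at $\bu$ a nonnegative eigen-type relation under $DF(\bu)$ whose zero pattern contradicts its irreducibility.
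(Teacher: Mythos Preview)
Your handling of the case $\rho(A)<1$ via Theorem \ref{Banachcombi} is exactly what the paper does. The rest of the proposal, however, has a genuine gap, and you have in fact identified it yourself: $F$ is only assumed differentiable at $\bu$, while the comparison points $\bu'$ and $\bv$ need not be close to $\bu$. Your proposed fix---pulling everything back to a neighbourhood of $\bu$ via the rescaling $F(\bal\krog\bx)=\bal^A\krog F(\bx)$---cannot work as stated. In Part~2 the eigenvector $\bv$ lies on the boundary $\Sn_+\setminus\Sn_{++}$, so any rescaled point $\bal\krog\bv$ still has the same zero pattern and is therefore never close to $\bu\in\kone_{++}$; rescaling also does not move a distinct positive eigenvector $\bu'\in\Sn_{++}$ towards $\bu$ in any useful way. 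Consequently the step ``a first-order expansion of $F$ about $\bu$ \ldots\ produces a nonzero nonnegative vector annihilated row-wise by $DF(\bu)$'' is not justified.

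The paper's argument activates $DF(\bu)$ by \emph{linear interpolation} rather than by rescaling. For Part~2 (Theorem \ref{rad<}) one takes $\alpha_k=\min\{u_{k,l}/v_{k,l}:v_{k,l}>0\}$ so that $0\lekk\bu-\bal\krog\bv\lek\bu$, and considers $\by(t)=(1-t)\bu+t\,\bal\krog\bv$. For small $t$ the Taylor expansion $F^{k}(\by(t))=\bu+t\,DF(\bu)^k(\bal\krog\bv-\bu)+o(t)$ is legitimate, and irreducibility of $DF(\bu)$ (applied to the nonnegative nonzero vector $\bu-\bal\krog\bv$, via powers $k=1,\ldots,\tau$) yields a \emph{strict} inequality $\sum_{k=1}^{\tau}F^k_i(\bal\krog\bv)<\tau\bu_i$ in one block. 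One then computes $F^k(\bal\krog\bv)=(\bal^{A^k}\circ\bt^{\sum_{s<k}A^s})\krog\bv$ and applies AM--GM together with $A^\top\bb=\rho(A)\bb$ to contradict the definition of $\bal$. Your Collatz--Wielandt inequality $\prod_i\theta_i^{b_i}\leq S^{\rho(A)-1}\prod_i\lambda_i^{b_i}$ is correct but only gives the non-strict bound; the strictness comes from the interpolation step and the iterates, not from an equality-case analysis. For Part~1 under $A^\top\bb=\bb$ (Theorem \ref{unique}), the paper does not track equality in Lemma \ref{contract} directly; it invokes a general non-expansive fixed-point result (Theorem \ref{nonuniquethm}) which, from two distinct fixed points of the normalized map $G$, produces $\bv\neq 0$ with $\ps{\bv}{\bphi}=0$ and $DG(\bu)\bv=\bv$, and then shows---using $L(\t\bb\krog\bu)=\t\bb\krog\bu$ for the right Perron vector $\t\bb$ of $A$ and the explicit relation \eqref{gradG} between $DG(\bu)$ and $L=\blam^{-I}\krog DF(\bu)$---that this forces $\dim\ker(I-L)\geq 2$, contradicting irreducibility of $DF(\bu)$.
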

The proof of this theorem is postponed to the end of Subsection \ref{subsec:uniqueness}.

It turns out that the assumptions in the theorem above can be refined. On the one hand, as in Theorem 2.5 \cite{Nussb}, in order to guarantee the uniqueness of a positive eigenvector the requirement that $DF(\bu)$ is irreducible can be relaxed to a condition on the eigenspace of $DF(\bu)$ corresponding to its spectral radius. On the other hand, for $d>1$, it can be shown that the spectral radius can not be attained in $\Sn_{+}\setminus\Sn_{++}$ under a weaker assumption than irreducibility. These relaxed assumptions are given in Theorems \ref{unique} and \ref{rad<} in Subsection \ref{subsec:uniqueness}.

\subsection{Collatz-Wielandt formulas}\label{CW_M_U1}
For convenience in the proof of Theorem \ref{CW_intro}, for a given $\bb\in\Dn$, we introduce the functions $\cwl\colon\NB^d\times \kone_{+,0}\to\R_{+}$ and $\cwu\colon\NB^d\times \kone_{++}\to\R_{++}$ defined as
\begin{equation}\label{def_cwu}
\cwu(F,\bu)=\prod_{i=1}^d \Big(\max_{j_i\in[n_i]}\frac{F_{i,j_i}(\bu)}{u_{i,j_i}}\Big)^{b_i},\qquad \cwl(F,\bx)=\prod_{i=1}^d \Big(\min_{\substack{j_i\in[n_i]\\ x_{i,j_i}>0}}\frac{F_{i,j_i}(\bx)}{x_{i,j_i}}\Big)^{b_i}.
\end{equation}
With this notation, the characterization of Theorem \ref{CW_intro} can be reformulated as
\begin{equation}\label{local_CW}
\inf\big\{\cwu(F,\bu)\ \big|\ {\bu\in\Sn_{++}} \big\} = r_{\bb}(F) = \max\big\{\cwl(F,\bv)\ \big|\ {\bv\in\Sn_+}\big\}.
\end{equation}
Note also that for $F\in\NB^d$, $\bb\in\R^d_{++}$ and $\bx\in\kone_{++}$, it holds $\cwl(F,\bx)=\cwu(F,\bx)$ if and only if $\bx$ is an eigenvector of $F$.

The proof of Theorem \ref{CW_intro} contains two cases, namely the case where $F\in\NB^d$ is nonexpansive and the one where $F$ is a strict contraction. For the first case we generalize Theorem 5.6.1 in \cite{NB} which holds for the case $d=1$. For the second case, we generalize the Collatz-Wielandt formula of Theorem 21 in \cite{us}.
\begin{proof}[Proof of Theorem \ref{CW_intro}]
First assume that $A^\top \bb=\bb$. Let $\bx\in\Sn_{++}$ and $k\in\N$, then we have $F^{k}(\bx)\ \lek\ \Bmax{F(\bx)}{\bx}^{\sum_{j=0}^{k-1} A^j}\krog\bx.$ 
Proposition \ref{Prop536} implies
\begin{equation*}
r_{\bb}(F)=\lim_{k\to\infty}\nnorm{F^{k}(\bx)}_{\bb}^{1/k} \leq \lim_{k\to\infty}\prod_{i=1}^d\bmaxi{i}{F(\bx)}{\bx}^{(\frac{1}{k}\sum_{j=0}^{k-1} A^j\bb)_i}=\cwu(F,\bx).
\end{equation*}
Hence, $r_{\bb}(F)\leq \inf\{\cwu(F,\bu)\mid {\bu\in\Sn_{++}}\}$. To show equality, assume first that $F$ has an eigenvector $\bu\in\Sn_{++}$. Then $\cwu(F,\bu)=r_{\bb}(F)$ and we are done. Now, suppose that $F$ does not have an eigenvector in $\Sn_{++}$, let $F^{(\delta_k)}$ and $(\blam^{(\delta_k)},\bx^{(\delta_k)})\in\R^d_{++}\times \Sn_{++}$ be as in Theorem \ref{BIGTHM}. Note that $\cwu(F,\bx)\leq \cwu(F^{(\delta_k)},\bx)$ as $F(\bx)\lek F^{(\delta_k)}(\bx)$ for every $k\in\N$ and $\bx\in\kone_{++}$.
It follows that
\begin{equation*}
r_{\bb}(F)=\lim_{k\to\infty}r_{\bb}\big(F^{(\delta_k)}\big) =\lim_{k\to\infty} \inf_{\bx\in\Sn_{++}}\cwu(F^{(\delta_k)},\bx) \geq \inf_{\bx\in\Sn_{++}}\cwu(F,\bx).
\end{equation*}
Now, we prove $r_{\bb}(F) = \max\{\cwl(F,\bv)\mid {\bv\in\Sn_+}\}$. To this end, let $\by\in\Sn_+$, if there exists $(i,j_i)\in\I$ such that $y_{i,j_i}>0$ and $F_{i,j_i}(\by)=0$, then $\cwl(F,\by)=0 \leq r_{\bb}(F)$. If this is not the case, then we have $\bt\krog\by\leq F(\by)$ with $\bt\in\R^d_{++}$ defined as $\theta_i = \min\{F_{i,j_i}(\by)/y_{i,j_i}\mid y_{i,j_i}>0, \ j_i\in[n_i] \}$ for all $i \in[d]$.
Hence, by Proposition \ref{Prop536}, we get
$\cwl(F,\by) = \prod_{i=1}^d \theta_i^{b_i}\leq r_{\bb}(F).$
Finally, by Theorem \ref{BIGTHM}, we know that there exists $(\blam,\bu)\in\R^d_{+}\times \Sn_{+}$ such that $r_{\bb}(F)=\prod_{i=1}^d\lambda_i^{b_i}=\cwl(F,\bu)$.

Now, suppose that $\rho(A)<1$ and $A^\top\bb\leq \bb$. As $\rho(A)<1$, Theorem \ref{Banachcombi} implies the existence of $(\blam,\bub)\in\R^d_{++}\times\Sn_{++}$ such that $F(\bub)=\blam\krog\bub$. Clearly, we have $\cwl(F,\bub)=\cwu(F,\bub)=r_{\bb}(F)$.
To prove the right-hand side of \eqref{local_CW}, it suffices to prove that for every $\byb\in\Sn_{+}$, we have $\cwl(F,\byb)\leq r_{\bb}(F)$. So, let $\byb\in\Sn_{+}$, if there exists $(i,j_i)\in\I$ such that $y_{i,j_i}>0$ and $F_{i,j_i}(\by)=0$, then the inequality is clear. Thus, we may assume without loss of generality that 
$F_{i,j_i}(\by)>0$ for every $(i,j_i)\in\I$ such that $y_{i,j_i}>0$.
Let $\bt\in\R^d_{++}$ be defined as 
$\theta_i = \min\{u_{i,j_i}/y_{i,j_i}\mid y_{i,j_i}>0, \ j_i\in[n_i] \}$ for all $i \in[d].$
Then $\bt\leq \ones$ because $\theta_i= \norm{\theta_i\by_i}_{\gamma_i}\leq \norm{\bu_i}_{\gamma_i}=1$ for all $i \in[d].$
Let $\boldsymbol{\Theta} = (\theta_1^{-1},\dots,\theta_d^{-1})$, then $\boldsymbol{\Theta}\geq \ones$ and $\byb\lek\boldsymbol{\Theta}\krog\bub.$
Thus, for $\bs= \bb-A^\top\bb\in\R^d_{+}$, we have
$\prod_{i=1}^d\Theta_i^{-\s_i}\leq 1.$
Now, note that
$F\big(\boldsymbol{\Theta}\krog\bub\big)=\big(\blam\circ\boldsymbol{\Theta}^{A}\big)\krog \bub$ and thus
\begin{equation*}
\cwl(F,\byb)\leq\prod_{i=1}^d\Big(\min_{\substack{j_i\in[n_i]\\ y_{i,j_i}>0}}\frac{F_{i,j_i}(\boldsymbol{\Theta}\krog\bu)}{y_{i,j_i}}\Big)^{b_i}
=\prod_{i=1}^d\Theta_i^{-\s_i}\lambda_i^{b_i}\leq r_{\bb}(F).
\end{equation*}
The left-hand side of \eqref{local_CW} can be proved in a similar way. Indeed, if $\byb\in\Sn_{++}$, then 
\begin{equation*}
\cwu(F,\bu)\geq\prod_{i=1}^d\bmaxi{i}{F(\Bmin{\by}{\bub}\krog\bub)}{\by}^{b_i} 
=\prod_{i=1}^d\bmini{i}{\by}{\bub}^{-\s_i}\lambda_i^{b_i}\geq r_{\bb}(F), 
\end{equation*}
as $\prod_{i=1}^d\bmini{i}{\byb}{\bub}^{-\s_i}\geq 1$.
\end{proof}
\subsection{Uniqueness and simplicity of positive eigenvectors}\label{subsec:uniqueness}
We prove the following theorem which gives a condition ensuring that the eigenvalue corresponding to an eigenvector which has some zero entry can not be maximal.
\begin{thm} \label{rad<}
Let $F\in\NB^d$ and $A=\A(F)$. Suppose that there exists $\bb\in\Dn$, $\blam\in\R^d_{++}$ and $\bu\in\Sn_{++}$ such that $A^\top\bb=\rho(A)\bb$ and $F(\bu)=\blam\krog\bu$. Assume $\rho(A)\leq 1$, $F$ is differentiable at $\bub$ and there exist $i\in[d]$ and $\tau\in\N$ such that 
\begin{equation}\label{dirr}
\forall \bw\in\kone_{+}\saufzero, \quad \text{if} \quad \bx = \sum_{k=1}^\tau DF(\bu)^k\bw, \quad \text{then} \quad \bx_i\in\R^{n_i}_{++}.
\end{equation}
Then, for every eigenpair $(\bt,\bvb)\in\R^d_{+}\times(\Sn_{+}\sauf\kone_{++})$ with $F(\bvb)=\bt\krog\bvb$, it holds
$\prod_{j=1}^d\theta_j^{b_j}<\prod_{j=1}\lambda_j^{b_j}.$
\end{thm}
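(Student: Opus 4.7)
I plan to argue by contradiction. Assume $\prod_{j=1}^{d}\theta_j^{b_j}\geq \prod_{j=1}^{d}\lambda_j^{b_j}$; if some $\theta_j=0$, the desired strict inequality is immediate since $\blam\in\R^d_{++}$, so I may assume $\bt\in\R^d_{++}$. The first step is to rescale $\bv$ so that it just touches $\bu$ from below: define $\bs\in\R^d_{++}$ componentwise by $s_i=\min\{u_{i,j_i}/v_{i,j_i}\colon v_{i,j_i}>0\}$, set $\bv'=\bs\krog\bv$ and $\bw=\bu-\bv'$. Then $\bv'\lek\bu$ with equality at some coordinate $(i,j_i^*)$ in every block $i$; since $\bv$ has a zero entry while $\bu\in\kone_{++}$, $\bw\in\kone_+\saufzero$, and $\bw$ has a zero entry in every block. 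The monotonicity of $\norm{\cdot}_{\gamma_i}$ together with $\bu,\bv\in\Sn_+$ yields $s_i\leq 1$. By multi-homogeneity, $\bv'$ is still an eigenvector with eigenvalue $\bt'=\bt\circ\bs^{A-I}$, and a direct computation using \eqref{homoident} gives $\prod_j(\theta'_j)^{b_j}=\prod_j\theta_j^{b_j}\prod_k s_k^{(A^\top\bb-\bb)_k}$. Under the hypothesis $A^\top\bb=\rho(A)\bb\leq\bb$, each factor $s_k^{(A^\top\bb-\bb)_k}$ is $\geq 1$, so $\prod_j(\theta'_j)^{b_j}\geq\prod_j\lambda_j^{b_j}$.

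Order-preservation applied to $\bv'\lek\bu$ yields $F(\bv')\lek F(\bu)$, which read at each touching coordinate $(i,j_i^*)$ (where $v'_{i,j_i^*}=u_{i,j_i^*}>0$) gives $\theta'_i\leq\lambda_i$ for every $i$. Combining this componentwise bound with $\prod_j(\theta'_j)^{b_j}\geq\prod_j\lambda_j^{b_j}$ and $b_i>0$ forces $\bt'=\blam$. Therefore $F(\bu)-F(\bv')=\blam\krog\bw$, and iterating produces $F^m(\bu)-F^m(\bv')=\blam^{(m)}\krog\bw$ with $\blam^{(m)}=\blam^{\sum_{l=0}^{m-1}A^l}$ for every $m\in\N$.

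The core of the proof is a differential argument that propagates the zero pattern of $\bw$ through the iterated Jacobian. For $s\in[0,1]$ we have $\bv'\lek\bu-s\bw\lek\bu$, so $F^m(\bv')\lek F^m(\bu-s\bw)\lek F^m(\bu)$ by order-preservation. Differentiability of $F$ at $\bu$, extended via Lemma \ref{Eulerthm}(3) to differentiability of $F$ at every $\blam^{(j)}\krog\bu$, gives
\begin{equation*}
s\,DF^m(\bu)\bw+o(s)=F^m(\bu)-F^m(\bu-s\bw)\lek F^m(\bu)-F^m(\bv')=\blam^{(m)}\krog\bw.
\end{equation*}
At any coordinate $(i,j_i)$ with $w_{i,j_i}=0$, the right-hand side vanishes, and dividing by $s$ and letting $s\to 0^+$, combined with the nonnegativity of $DF^m(\bu)$ (Theorem \ref{opcharac}), yields $(DF^m(\bu)\bw)_{i,j_i}=0$. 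A chain-rule expansion combined with Lemma \ref{Eulerthm}(3) expresses $DF^m(\bu)$ as a product of the matrix $L:=DF(\bu)$ interleaved with positive diagonal matrices. Since positive diagonal matrices preserve the support of a nonnegative vector, and since the support of $L\bz$ depends on $\bz$ only through its support, an easy induction shows that $DF^m(\bu)\bw$ and $L^m\bw$ share the same set of zero coordinates. Consequently $(L^m\bw)_{i,j_i}=0$ at every zero of $\bw$ and every $m\geq 1$.

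Summing the identities just obtained for $m=1,\ldots,\tau$ gives $\bigl(\sum_{m=1}^{\tau}L^m\bw\bigr)_{i,j_i^*}=0$ for the specific block $i$ supplied by \eqref{dirr} and for some $j_i^*\in[n_i]$. This contradicts \eqref{dirr}, which asserts that the $i$-th block of $\sum_{m=1}^\tau L^m\bw$ lies entirely in $\R^{n_i}_{++}$. The main technical obstacle I anticipate is the chain-rule step: realizing each factor $DF(\blam^{(j)}\krog\bu)$ as $L$ sandwiched between positive block-diagonal matrices and then running the induction that identifies the support of $DF^m(\bu)\bw$ with the support of $L^m\bw$ requires careful bookkeeping to ensure the positive diagonal scalings never wipe out or create zeros as the product is built up.
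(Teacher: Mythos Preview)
Your argument is correct and takes a genuinely different route from the paper's. The paper first normalizes to $\blam=\ones$ (so that the chain rule gives $DF^k(\bu)=DF(\bu)^k$ directly, with no diagonal scalings to track), applies \eqref{dirr} to $\bz=\bu-\bal\krog\bv$, uses a first-order expansion of $F^k$ along the segment $\by(t)=(1-t)\bu+t\,\bal\krog\bv$ to obtain the strict inequality $\sum_{k=1}^\tau F^k_i(\bal\krog\bv)<\tau\,\bu_i$, and then unwinds the resulting system of inequalities via the arithmetic--geometric mean inequality together with $A^\top\bb=\rho(A)\bb$, $\bal\leq\ones$ and $\prod_l\theta_l^{b_l}\geq 1$, reaching a contradiction with the definition of $\bal$.

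Your key additional ingredient is the touching step: comparing $F(\bv')\lek F(\bu)$ at the coordinate in each block where $\bv'$ and $\bu$ agree yields $\theta'_i\leq\lambda_i$ for every $i$, and combined with the reverse product inequality this forces $\bt'=\blam$. Once $\bu$ and $\bv'$ share the same eigenvalue, the identity $F^m(\bu)-F^m(\bv')=\blam^{(m)}\krog\bw$ is exact, and the vanishing of $DF^m(\bu)\bw$ (hence of $L^m\bw$) on the zero set of $\bw$ follows by a simple sandwich, contradicting \eqref{dirr} without any AM--GM bookkeeping. This is more elementary, and the intermediate fact $\bt'=\blam$ is of independent interest; the price you pay for not normalizing $\blam$ is the diagonal-scaling induction relating the support of $DF^m(\bu)\bw$ to that of $L^m\bw$, which you correctly identify as the only delicate bookkeeping step. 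Both proofs deploy the differentiability hypothesis in the same spirit (first-order Taylor at $\bu$ combined with order-preservation), simply organised around different auxiliary observations.
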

Before giving a proof of this theorem, we note that while in the case $d=1$ the irreducibility assumption \eqref{dirr} is equivalent to requiring $DF(\bu)$ to be irreducible, this is not the case anymore when $d>1$. Indeed, if $DF(\bu)$ is irreducible, then \eqref{dirr} is satisfied, however the converse might not be true as shown by the following example. In fact, for any $d\geq 1$, $DF(\bu)$ is irreducible if and only if \eqref{dirr} holds and $\A(F)$ is irreducible.
\begin{ex}\label{irrrrrr_ex}
Let $d=2$, $n_1=n_2=2$ and $F\in\NB^d$ with
\begin{equation*}
F((s,t),(u,v))=\Big(\big((st)^{1/4}u^{1/2},(st)^{1/4}v^{1/2}\big),\big((uv)^{1/2}, (uv)^{1/2}\big)\Big).
\end{equation*}
Then, $F(\ones)=\ones$, Theorem \ref{rad<} applies to $F$, but $DF(\ones)$ is not irreducible.
\end{ex}
We now prove Theorem \ref{rad<}. The techniques used are inspired by the proof of Theorem 6.1.7 in \cite{NB} which implies the same result for the case $d=1$.
\begin{proof}[Proof of Theorem \ref{rad<}]
Let $\norm{\cdot}$ be any norm on $\R^{n_1}\times \ldots\times \R^{n_d}$ and $\blam\in\R^d_{++}$ be such that $F(\bu)=\blam\krog\bu$. We first prove the statement for $\blam=\ones$, then we show how to transfer the proof to the case $\blam\neq\ones$. By the chain rule, we have $DF(\bub)^k=DF^{k}(\bub)$ for every $k\in\N$. Suppose by contradiction that there exists $(\bt,\bvb)\in\R^d_{+}\times(\Sn_{+}\sauf\kone_{++})$ with $F(\bvb)=\bt\krog\bvb$ and $\prod_{l=1}^d\theta_l^{b_l}=1$. Let $\bal \in\R^d_{++}$ be defined as
$\alpha_k=\min\big\{{u_{k,l_k}}/{v_{k,l_k}}\ \big|\ l_k\in[n_k], \ v_{k,l_k}>0\big\}$ for every $k\in[d]$,
then  $0\lekk\bub-\bal\krog\bvb\lek\bu$. Hence
$-\big(\sum_{k=1}^\tau DF(\bub)^k(\bal\krog\bvb-\bub)\big)_i \in\R^{n_i}_{++}.$
For $t\in (0,1]$, define $\byb(t)=(1-t)\bub+t\bal\krog\bvb\lekk\bub$ and note that
\begin{equation*}
F^{k}\big(\byb(t)\big)=F^{k}(\bub)+t\ DF(\bub)^k(\bal\krog\bvb-\bub) + t\ \norm{\bal\krog\bvb-\bub}\ \epsilon_k\big(t(\bal\krog\bvb-\bub)\big)
\end{equation*}
where $\lim_{\norm{\bwb}\to 0}\epsilon_k(\bwb)=0$. If follows that, with $\bzb=\bal\krog\bvb-\bub$, we have
\begin{align*}
\sum_{k=1}^\tau \Big(F^k(\bub)-F^k\big(\byb(t)\big)\Big)= t \bigg(-\sum_{k=1}^\tau DF(\bu)^k\bzb -\norm{\bzb}\sum_{k=1}^\tau\epsilon_k\big(t\bzb\big)\bigg).
\end{align*}
Since $\lim_{t\to 0}\sum_{k=1}^\tau\epsilon_k\big(t\bzb\big)=0$ and $-\sum_{k=1}^\tau \big(DF(\bu)^k\bzb\big)_i\in\R^{n_i}_{++}$, there exists $s\in (0,1]$ such that for every $t\in (0,s]$, it holds
$\sum_{k=1}^\tau \Big(F_i^k(\bub)-F_i^k\big(\byb(t)\big)\Big)\in\R^{n_i}_{++}.$
For all $t\in (0,1]$, we have $\bal\krog\bvb \lek\byb(t)$ and thus 
$\sum_{k=1}^\tau \big(F^k\big(\byb(t)\big)-F^k(\bal\krog\bv)\big)\in\kone_{+}.$
It follows with $\blam = \ones$ and $F(\bu)=\bu$ that
$\sum_{k=1}^{\tau}F^k(\bal\krog\bv) \lek \tau \bu$ and $ \sum_{k=1}^{\tau}F_i^k(\bal\krog\bv)< \tau\bu_i.$
So, for every $(j_1,\ldots,j_d)\in\J$, we have
\begin{equation*}
\tau\prod_{l=1}^du_{l,j_l}^{b_l}> \prod_{l=1}^d\bigg(\sum_{k=1}^{\tau}F^k_{l,j_l}(\bal\krog\bv)\bigg)^{b_l}=\prod_{l=1}^dv_{l,j_l}^{b_l}\bigg(\sum_{k=1}^\tau \big(\bal^{A^k}\big)_l\big(\bt^{\sum_{s=0}^{k-1}A^s}\big)_l\bigg)^{b_l}.
\end{equation*}
Using the inequality relating arithmetic and geometric mean, we get
\begin{equation*}
\sum_{k=1}^\tau \big(\bal^{A^k}\big)_l\big(\bt^{\sum_{s=0}^{k-1}A^s}\big)_l\geq \tau \prod_{k=1}^\tau \Big(\big(\bal^{A^k}\big)_l\big(\bt^{\sum_{s=0}^{k-1}A^s}\big)_l\Big)^{1/\tau}.
\end{equation*}
It follows that
\begin{align*}
\prod_{l=1}^d\bigg(\sum_{k=1}^\tau \big(\bal^{A^k}\big)_l&\big(\bt^{\sum_{s=0}^{k-1}A^s}\big)_l\bigg)^{b_l} \geq \tau \prod_{k=1}^\tau \prod_{l=1}^d\Big(\big(\bal^{A^k}\big)_l\big(\bt^{\sum_{s=0}^{k-1}A^s}\big)_l\Big)^{b_l/\tau}\\
&= \tau\prod_{k=1}^\tau \Big(\prod_{l=1}^{d}\alpha_l^{b_l}\Big)^{\frac{\rho(A)^k}{\tau}}\Big(\prod_{l=1}^{d}\theta_l^{b_l}\Big)^{\frac{1}{\tau}\sum_{s=0}^{k-1}\rho(A)^s}\,\geq\, \tau\prod_{l=1}^{d}\alpha_l^{b_l},
\end{align*}
where we have used that $\bal\leq \ones$ because $\bu,\bv\in\Sn_+$. Thus, for all $ (j_1,\ldots,j_d)\in\J$ we have
$\prod_{l=1}^du_{l,j_l}^{b_l}> \prod_{l=1}^d(v_{l,j_l}\alpha_l)^{b_l}$,
a contradiction to the definition of $\bal$. 
Now, if $F(\bu)=\blam\krog\bu$ with $\blam\neq \ones$, then $\tilde F\in\NB^d$ defined as $\tilde F(\bx)=(\lambda_1^{-1},\dots,\lambda_d^{-1})\krog F(\bx)$ satisfies our assumptions and $\tilde F(\bu)=\bu$. So, if $(\bt,\bvb)\in\R^d_{+}\times(\Sn_{+}\sauf\kone_{++})$ satisfies $F(\bvb)=\bt\krog\bvb$, then $\tilde F(\bv)=(\theta_1/\lambda_1,\dots,\theta_d/\lambda_d)\krog\bv$ and thus 
$\prod_{l=1}^d \big(\theta_l/\lambda_l\big)^{b_l}<1$ implies $
\prod_{j=1}^d\theta_j^{b_j}<r(F).$
\end{proof}

Now let us fix $\bphi\in\kone_{++}$. Our next result is concerned with the uniqueness of positive eigenvectors in $\S^{\bphi}_{++}=\{\bx\in\kone_{++}\ | \ \ps{\bx_i}{\bphi_i}=1, \forall i\}$.  We first need to derive a number of intermediate results. The first one is a theorem with a flavor of fixed point theory in the sense that it only requires $G\colon \S^{\bphi}_{++}\to \S^{\bphi}_{++}$ to be nonexpansive under the metric $\mu_{\bb}$. The theorem states that if $G$ has two distinct positive eigenvectors $\bu,\bw\in\S_{++}^{\bphi}$, then $DG(\bu)$ has a fixed point $\bv$ which is orthogonal to $\bphi$.
The proof of this result can be easily obtained from the one of Theorem 6.4.1 \cite{NB}.
\begin{thm}\label{nonuniquethm}
Let $\bphi\in\kone_{++}$, $\bb\in\R^d_{++}$ and $G\colon\S^{\bphi}_{++}\to \S^{\bphi}_{++}$ be such that $\mu_{\bb}(G(\bx),G(\by)) \leq \mu_{\bb}(\bx,\by)$ for all $\bx,\by\in\S^{\bphi}_{++}$.
If there exist $\bu,\bw\in\S_{++}^{\bphi},\bu\neq\bw$ such that $G(\bu)=\bu$, $G(\bw)=\bw$ and $G$ is differentiable at $\bu$, then there exists $\bv\in V=\R^{n_1}\times\ldots\times\R^{n_d}, \bv\neq 0$ such that $\ps{\bv}{\bphi}=0$ and $DG(\bu)\bv=\bv$.
\end{thm}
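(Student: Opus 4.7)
The plan is to produce a nonzero tangent fixed vector of $L:=DG(\bu)$ as a Cesàro accumulation point of the orbit $\{L^k(\bw-\bu)\}$ in the tangent space to $\S^{\bphi}_{++}$ at $\bu$. I would start by exploiting the segment $\bw_t:=(1-t)\bu+t\bw$, $t\in[0,1]$. Each component $(\bw_t)_i$ is a straight segment in $\R^{n_i}_{++}$ and hence a Hilbert-metric geodesic, so $t\mapsto \bw_t$ traces a $\mu_{\bb}$-geodesic from $\bu$ to $\bw$ inside $\S^{\bphi}_{++}$. Combining the non-expansiveness of $G$ with the triangle inequality
\begin{equation*}
\mu_{\bb}(\bu,\bw)\le \mu_{\bb}(G(\bw_t),\bu)+\mu_{\bb}(G(\bw_t),\bw)\le \mu_{\bb}(\bw_t,\bu)+\mu_{\bb}(\bw_t,\bw)=\mu_{\bb}(\bu,\bw)
\end{equation*}
forces both non-expansiveness inequalities to become equalities. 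The same argument applies verbatim to every iterate $G^k$, which is non-expansive, fixes $\bu$ and $\bw$, and is differentiable at $\bu$ with $DG^k(\bu)=L^k$ (by the chain rule applied inductively from $\bu=G(\bu)$).

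To pass to infinitesimals, set $\boldsymbol{\zeta}:=\bw-\bu$ and use $G^k(\bw_t)=\bu+tL^k\boldsymbol{\zeta}+o(t)$. Dividing the two equalities by $t$ and letting $t\to 0^+$ yields
\begin{equation*}
N_{\bu}(L^k\boldsymbol{\zeta})=N_{\bu}(\boldsymbol{\zeta}) \quad\text{and}\quad \psi(L^k\boldsymbol{\zeta})=\psi(\boldsymbol{\zeta}), \qquad k\ge 0,
\end{equation*}
where $N_{\bu}(\bv):=\sum_{i} b_i\bigl(\max_j v_{i,j}/u_{i,j}-\min_j v_{i,j}/u_{i,j}\bigr)$ is the infinitesimal $\mu_{\bb}$-seminorm at $\bu$, and $\psi(\bv):=\lim_{t\to 0^+}[\mu_{\bb}(\bu+t\bv,\bw)-\mu_{\bb}(\bu,\bw)]/t$ is the one-sided directional derivative at $\bu$ of the convex map $\bv\mapsto \mu_{\bb}(\bu+\bv,\bw)$. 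Convexity immediately gives $\psi(\boldsymbol{\zeta})\le \mu_{\bb}(\bw,\bw)-\mu_{\bb}(\bu,\bw)=-\mu_{\bb}(\bu,\bw)<0$, a strict negative quantity that will be decisive below.

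Restricted to the tangent space $T:=\{\bv\in V: \ps{\bv_i}{\bphi_i}=0,\ i\in[d]\}$, $N_{\bu}$ is a genuine norm: its kernel consists of vectors with $\bv_i=c_i\bu_i$, and the condition $c_i\ps{\bu_i}{\bphi_i}=c_i=0$ forces $\bv=0$. Since $\boldsymbol{\zeta}\in T$ and $L$ preserves $T$ (because $G$ sends $\S^{\bphi}_{++}$ into itself), the orbit $\{L^k\boldsymbol{\zeta}\}_{k\ge 0}$ lies in the finite-dimensional space $T$ and is $N_{\bu}$-bounded. The Cesàro averages $\bar{\bv}_n:=\tfrac{1}{n}\sum_{k=0}^{n-1}L^k\boldsymbol{\zeta}$ are therefore bounded in $T$; passing to a convergent subsequence $\bar{\bv}_{n_j}\to \bv^*\in T$ and using the linear identity $L\bar{\bv}_n-\bar{\bv}_n=(L^n\boldsymbol{\zeta}-\boldsymbol{\zeta})/n\to 0$, we obtain $L\bv^*=\bv^*$.

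The main obstacle is ruling out $\bv^*=0$ --- a priori, the Cesàro averages could collapse to $0$ (for instance if $L$ behaved like a nontrivial rotation on some plane in $T$). This is where the second infinitesimal equality together with the sublinearity (subadditivity and positive homogeneity) of $\psi$ becomes crucial:
\begin{equation*}
\psi(\bar{\bv}_n)\;\le\; \frac{1}{n}\sum_{k=0}^{n-1}\psi(L^k\boldsymbol{\zeta}) \;=\; \psi(\boldsymbol{\zeta})\;<\;0.
\end{equation*}
Since $\psi$ is sublinear and finite on the finite-dimensional space $V$, it is continuous, so $\psi(\bv^*)\le \psi(\boldsymbol{\zeta})<0$ while $\psi(0)=0$, forcing $\bv^*\neq 0$. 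Finally, $\bv^*\in T$ yields the componentwise orthogonality $\ps{\bv^*_i}{\bphi_i}=0$ for each $i$, hence in particular $\ps{\bv^*}{\bphi}=0$, and $\bv:=\bv^*$ is the desired tangent fixed vector of $DG(\bu)$.
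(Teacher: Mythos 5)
Your overall strategy --- equality in the non-expansiveness estimate along the geodesic segment from $\bu$ to $\bw$, differentiation at $t=0^+$, Cesàro averaging of $\{L^k\boldsymbol{\zeta}\}$, and a sublinear functional $\psi$ to block collapse of the averages to $0$ --- is sound and is close in spirit to the argument of Lemmens--Nussbaum Theorem 6.4.1, which the paper simply cites here without reproducing the details. The triangle-inequality chain, the computation $L\bar{\bv}_n-\bar{\bv}_n=(L^n\boldsymbol{\zeta}-\boldsymbol{\zeta})/n$, the observation that $N_{\bu}$ is a norm on the tangent space $T$, and the final orthogonality all check out.

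There is, however, a genuine flaw in the justification: the map $\bz\mapsto\mu_{\bb}(\bz,\bw)$ is \emph{not} convex on $\S^{\bphi}_{++}$, and your argument relies on this claim in three places --- to guarantee the existence of the one-sided directional derivative $\psi$, to get $\psi(\boldsymbol{\zeta})\le \mu_{\bb}(\bw,\bw)-\mu_{\bb}(\bu,\bw)<0$, and to derive the sublinearity of $\psi$ used in the Cesàro estimate $\psi(\bar{\bv}_n)\le\frac{1}{n}\sum_k\psi(L^k\boldsymbol{\zeta})$. To see the failure of convexity already with $d=1$ and $n=2$: take $\bw=(1/3,2/3)$ and $\bx(s)=(s,1-s)$; for $s\in(1/3,1/2)$ one has $\mu(\bx(s),\bw)=\ln\!\big(2s/(1-s)\big)$, whose second derivative $-1/s^2+1/(1-s)^2$ is strictly negative there. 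So the inequality ``$\psi(\zeta)\le f(\bu+\zeta)-f(\bu)$'' you invoke is not available, and neither is subadditivity of $\psi$ \emph{as a consequence of convexity}. (For a general non-convex locally Lipschitz function, one-sided directional derivatives, when they exist, need not be subadditive: $f(x)=-|x|$ already gives $\psi(1)+\psi(-1)=-2<0=\psi(0)$.)

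The good news is that all three facts you need are true, just not for the stated reason, so the argument can be repaired without changing its skeleton. Writing $\mu_{\bb}(\bz,\bw)=\sum_i b_i\big[\ln\max_j(z_{ij}/w_{ij})+\ln\max_j(w_{ij}/z_{ij})\big]$, each summand is the logarithm of a finite maximum of linear ratios, and the one-sided directional derivative of $\max_j f_j$ at $\bu$ in direction $\bv$ is $\max_{j\in J(\bu)}Df_j(\bu)\bv$, a maximum of linear functionals. Hence $\psi$ exists, is positively homogeneous, and is a positive combination of maxima of linear functionals --- therefore sublinear (and continuous on $V$) without any appeal to convexity of $\mu_{\bb}(\cdot,\bw)$ itself. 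For the strict negativity, avoid the convexity inequality and use the geodesic identity directly: since $\mu_{\bb}(\bw_t,\bw)=\mu_{\bb}(\bu,\bw)-\mu_{\bb}(\bu,\bw_t)$, dividing by $t$ and letting $t\to0^+$ gives $\psi(\boldsymbol{\zeta})=-N_{\bu}(\boldsymbol{\zeta})$, and $N_{\bu}(\boldsymbol{\zeta})>0$ because $\boldsymbol{\zeta}\in T\setminus\{0\}$. With these corrected justifications, the rest of your proof goes through as written and yields the desired nonzero fixed vector of $DG(\bu)$ in the tangent hyperplane.
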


 The second one is a lemma describing properties of $DF(\bu)$ and $DG(\bu)$ where $G\colon \S^{\bphi}_{++}\to \S^{\bphi}_{++}$ is defined in terms of $F$  as
\begin{equation}\label{defG}
G(\bx)=\big(\ps{\bphi_1}{F_1(\bx)}^{-1},\ldots,\ps{\bphi_d}{F_d(\bx)}^{-1}\big)\krog F(\bx).
\end{equation}
The lemma shows that when $\bu$ is a fixed point of $F\in\NB^d$ and $F$ is differentiable at $\bu$, then one can find $\t \bb\in\R^d_{+}$ such that $\t \bb\krog\bu$ is an eigenvector of $DF(\bu)$.
\begin{lem}\label{difflem}
Let $\bphi\in\kone_{++}$, $F\in\NB^d$, $A=\A(F)$ and $G$ as in \eqref{defG}. If there exists $\bu\in\Sphi_{++}$ with $F(\bu)=\bu$, $F$ is differentiable at $\bub$ and $\t\bb\in\R^d_{+,0}$ satisfies $A\t\bb=\t\bb$, then
$DF(\bu)\t\bu=\t\bu $ with $\t\bu = \t\bb\krog\bu.$
Moreover, $G$ is differentiable at $\bu$ and for every $\bz\in V$,
\begin{equation}\label{gradG}
DG(\bu)\bz = DF(\bu)\bz-\big(\ps{DF_1(\bu)\bz}{\bphi_1},\ldots,\ps{DF_d(\bu)\bz}{\bphi_d}\big)\krog\bu
\end{equation}
\end{lem}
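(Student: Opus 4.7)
The argument divides naturally into two independent parts.

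For the first claim, I plan to apply Lemma \ref{Eulerthm} componentwise. Each scalar coordinate $F_{i,j_i}$ is multi-homogeneous with exponent vector equal to the $i$-th row of $A$, so the Euler-type identity in Lemma \ref{Eulerthm} gives $\ps{\grad_k F_{i,j_i}(\bu)}{\bu_k}=A_{i,k}F_{i,j_i}(\bu)$ for every $(i,j_i)\in\I$ and $k\in[d]$. Packaging these scalar equalities into vector form yields $D_k F_i(\bu)\bu_k=A_{i,k}F_i(\bu)$ as an identity in $\R^{n_i}$. Summing over $k\in[d]$, and using $F(\bu)=\bu$ together with the hypothesis $A\t\bb=\t\bb$, I will then compute
\begin{equation*}
DF_i(\bu)\t\bu=\sum_{k=1}^d \t b_k\,D_k F_i(\bu)\bu_k=\sum_{k=1}^d \t b_k A_{i,k}\bu_i=(A\t\bb)_i\bu_i=\t b_i\bu_i=\t\bu_i,
\end{equation*}
which establishes $DF(\bu)\t\bu=\t\bu$.

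For the second claim, I would first observe that $G_i(\bx)=F_i(\bx)/\ps{\bphi_i}{F_i(\bx)}$ for each $i\in[d]$. Since $F$ is differentiable at $\bu$ and $\ps{\bphi_i}{F_i(\bu)}=\ps{\bphi_i}{\bu_i}=1>0$, the denominator stays nonzero in a neighborhood of $\bu$; hence $G$ is differentiable at $\bu$ by the quotient rule. A direct computation, using $F_i(\bu)=\bu_i$ and $\ps{\bphi_i}{F_i(\bu)}=1$, then gives
\begin{equation*}
DG_i(\bu)\bz=DF_i(\bu)\bz-\bu_i\,\ps{\bphi_i}{DF_i(\bu)\bz},
\end{equation*}
and stacking this across $i\in[d]$ produces \eqref{gradG}.

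Overall the proof is essentially bookkeeping: once Lemma \ref{Eulerthm} is invoked componentwise, the first claim collapses to a one-line linear calculation exploiting $A\t\bb=\t\bb$, and the second claim reduces to the quotient rule applied to a ratio whose denominator does not vanish. I do not foresee a substantive obstacle; the only subtle point to be careful about is the translation between the scalar statement of Euler's identity (for each coordinate $F_{i,j_i}$) and its vectorial reformulation in terms of the partial Jacobians $D_k F_i(\bu)$, but this is routine.
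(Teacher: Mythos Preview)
Your proposal is correct and follows essentially the same approach as the paper: invoke Lemma~\ref{Eulerthm} componentwise to obtain $D_kF_i(\bu)\bu_k=A_{i,k}F_i(\bu)$, then use $F(\bu)=\bu$ and $A\t\bb=\t\bb$ for the first claim, and apply the quotient rule at $\bu$ for the second. The paper's write-up is terser but the logic is identical.
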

\begin{proof}
For $F\in \NB^d$ let us write $D_kF_i(\bv)\in\R^{n_i\times n_k}$ to denote the Jacobian matrix of the mapping $\by_k\mapsto F_i(\bx_1,\ldots,\bx_{k-1},\by_k,\bx_{k+1},\ldots,\bx_d)$ at $\by_k= \bx_k$. 
By Lemma \ref{Eulerthm}, for all $k,i\in[d]$, we have $D_iF_k(\bu)\bu_i=A_{k,i}\bu_k$. Hence,
\begin{equation}\label{gradeval}
DF_i(\bu)(\bal \krog\bu)=(A\bal)_i \bu_i \qquad \forall \bal\in\R^d_{++}
\end{equation}
implying $DF(\bu)\t\bu=(A\t\bb)\krog\bu=\t \bu$. Now, if $F$ is differentiable at $\bxb\in\kone_{++}$, then
\begin{equation*}
D_kG_{i}(\bxb)=\frac{\ps{F_i(\bxb)}{\bphi_i}D_kF_{i}(\bxb)-F_{i}(\bxb)\bphib_i^\top D_kF_i(\bxb)}{\ps{F_i(\bxb)}{\bphi_i}^2}\qquad \forall k, i \in[d].
\end{equation*}
 In particular, if $\bxb=\bub\in\Sphi_{++}$ and $F(\bu)=\bu$, the above equation simplifies to
$D_kG_{i}(\bu)=D_kF_{i}(\bu)-\bu_{i}\bphi_i^\top D_kF_i(\bu).$
\end{proof}

We now state and prove Theorem \ref{unique} which extends Theorem 6.4.6 in \cite{NB} to the case $d\geq 1$. 
\newcommand{\tbphi}{\bar{\boldsymbol{\varphi}}}
\begin{thm}\label{unique}
Let $\bphi\in\kone_{++}$, $F\in\NB^d$ and $A=\A(F)$. Suppose that $A$ is irreducible, $\rho(A)=1$, there exist $\blam \in\R^d_{++}$ and $\bu\in\S_{++}^{\bphi}$ with $F(\bu)=\blam \krog \bu$ and $F$ is differentiable at $\bu$. Consider the linear mapping $L\colon\kone_+\to\kone_+$ defined as $L(\bx)=(\lambda_1^{-1},\dots,\lambda_d^{-1})\krog DF(\bu)\bx$ for every $\bx$, then $\rho(L)=1$ and if $\dim(\ker(I-L))=1$, then $\bu$ is the unique eigenvector of $F$ in $\S^{\bphi}_{++}$.
\end{thm}
\begin{proof}[Proof of Theorem \ref{unique}]
Let $\t\bb,\bb\in\Dn$ be such that $A\t\bb=\t\bb$ and $A^\top\bb = \bb$. These vectors always exist because $A$ is assumed to be irreducible. Suppose by contradiction that there exists $\bwb\in\Sphi_{++}\sauf\{\bu\}$ and $\t\blam\in\R^d_{++}$ such that $F(\bw)=\t\blam\krog\bw$. Let $\t F\in\NB^d$ be defined as in $\t F(\bx)=(\lambda_1^{-1},\dots,\lambda_d^{-1})\krog F(\bx)$ for every $\bx\in\kone_+$. Then, we have $\t F(\bu)=\bu$, $L=D\t F(\bu)$ and $\t F(\bw)=(\tilde \lambda_1/\lambda_1,\dots,\tilde \lambda_d/\lambda_d)\krog \bw$. Lemma \ref{difflem} implies that $\t\bu=\t\bb\krog\bu\in\kone_{++}$ satisfies $L\t\bu=\t\bu$. Theorem \ref{opcharac} implies that $L$ is a nonnegative matrix. Hence, Proposition \ref{Prop536} and $L\t\bu=\t\bu\in\kone_{++}$ imply that $\rho(L)=1$. Let $G$ be defined as \eqref{defG}, then $G$ is nonexpansive by Lemma \ref{contract}. By Theorem \ref{nonuniquethm}, there is a $\bv\neq 0$ with 
\begin{equation}\label{contreq}
\ps{\bv}{\bphi}=0,\qquad L\bv-\bal\krog\bu=\bv \quad \text{where}\quad\bal= \big(\ps{L\bv}{\bphi_1},\ldots,\ps{L\bv}{\bphi_d}\big).
\end{equation}
First, suppose that $\ps{\bb}{\bal}=0$. Then for $\tbphi\in\kone_{+,0}$ with $\ps{\bu_i}{\tbphi_i}=1$, $i\in[d]$, we have 
\begin{equation}\label{goodtrick}
\sum_{i=1}^d\ps{\big(L\bv\big)_i}{b_i\tbphi_i}= \sum_{i=1}^d\ps{\bv_i}{b_i\tbphi_i}+\sum_{i=1}^d\alpha_ib_i\ps{\bu_i}{\tbphi_i} = \sum_{i=1}^d\ps{\bv_i}{b_i\tbphi_i}.
\end{equation}
Let $(i,j_i)\in\I$ and define $\t\be^{(i,j_i)}\in\R^{n_i}_{+,0}$ as $\big(\t\be^{(i,j_i)}\big)_{l_i}=1$ if $j_i=l_i $ and $\big(\t\be^{(i,j_i)}\big)_{l_i}=0$ otherwise.
Furthermore, consider $\tbphi^{(i,j_i)}\in\kone_{+,0}$ defined as 
\begin{equation*}
\tbphi^{(i,j_i)}=\bigg(\frac{\ones}{\ps{\ones}{\bu_1}},\ldots,\frac{\ones}{\ps{\ones}{\bu_{i-1}}},\frac{\ones-\t\be^{(i,j_i)}}{\ps{\ones-\t\be^{(i,j_i)}}{\bu_i}},\frac{\ones}{\ps{\ones}{\bu_{i+1}}},\ldots,\frac{\ones}{\ps{\ones}{\bu_d}}\bigg).
\end{equation*}
Plugging $\tbphi^{(i,j_i)}$ into Equation \eqref{goodtrick} for every $(i,j_i)\in\I$ implies the existence of $M\in\R^{\t N\times \t N}$, with $\t N = n_1+\ldots+n_d$, such that $ML\bv=M\bv$,
$M_{(i,j_i),(k,l_k)}>0$ for every $(i,j_i),(k,l_k)\in\I$ with $(i,j_i)\neq (k,l_k)$ and  $M_{(i,j_i),(i,j_i)}=0$ for every $(i,j_i)\in\I$. In particular, $M$ is invertible and thus $L\bv=\bv$. Hence, by assumption, there exists $\beta\in\R\saufzero$ such that $\bv=\beta\t\bu$. We obtain the contradiction 
$0=\beta^{-1}\ps{\bv}{\bphi}=\sum_{i=1}^d\t b_i=1.$
Now, suppose that $\ps{\bb}{\bal}\neq 0$ and let $\norm{\cdot}$ be any monotonic norm on $\R^{n_1}\times \ldots\times \R^{n_d}$. Note that $A\bal \neq 0$ because it would imply the contradiction $0=\ps{A\bal}{\bb}=\ps{\bal}{A^\top\bb}=\ps{\bal}{\bb}.$
Let $\nu\in\N$, with \eqref{gradeval} and \eqref{contreq} we get
\begin{equation}\label{eq1contrad}
L^{\nu+1}\bv-\bv\ =\ \sum_{k=0}^{\nu} L^k(L\bv-\bv)\ =\ \sum_{k=0}^{\nu}L^{k}(\bal\krog\bu)\ =\ \sum_{k=0}^{\nu}(A^k\bal)\krog\bu.
\end{equation}
On the one hand, as $\t\bu>0$, there exists $t>0$ with $-t\t\bu\lek\bv\lek t\t\bu$. 
It follows that $ 0 \lek L^{\nu+1}\bv+t\t\bu\lek 2t\t\bu$ because $-t\t\bu \lek L^{\nu+1}\bv \lek t\t\bu$. Thus,
\begin{equation}\label{eq2contrad}
\norm{L^{\nu+1}\bv}\leq \norm{L^{\nu+1}\bv+t\t\bu}+\norm{t\t\bu}\leq 3t\norm{\bu} \qquad \forall \nu\in\N.
\end{equation}
On the other hand, as $A$ is irreducible, we know from Theorem 1.1 \cite{Francesco} that 
the sequence $\frac{1}{k+1}\sum_{s=0}^kA^s$ converges towards $\ps{\bb}{\t\bb}^{-1}\t\bb\bb^\top$ as $k\to \infty$.
This implies that we have 
$\lim_{\nu\to\infty}\norm{\sum_{k=0}^{\nu}(A^k\bal)\krog\bu}= \infty.$
A contradiction to \eqref{eq1contrad} and \eqref{eq2contrad}.
\end{proof}
In the above theorem, if $DF(\bu)$ is irreducible, then $L$ is irreducible and so $\dim(\ker(I-L))=1$ follows by the linear Perron-Frobenius theorem. Thus Theorem \ref{simplicity} follows as a consequence. However, on the contrary, note that there are cases where $\dim(\ker(I-L))=1$ is satisfied but $DF(\bu)$ is not irreducible (see e.g. \cite{NB} p. 143). 
\begin{proof}[Proof of Theorem \ref{simplicity}]
If $DF(\bu)$ is irreducible, then the assumptions on $DF(\bu)$ in Theorems \ref{rad<} and \ref{unique} are satisfied.
Hence, uniqueness of $\bu$ follows from Theorem \ref{Banachcombi} if $\rho(A)<1$ and Theorem \ref{unique} if $\rho(A)=1$. Finally, Theorem \ref{rad<} implies the second part of the claim.
\end{proof}

\section{Convergence to the unique positive eigenvector}\label{PM_section}
We conclude the paper with a study of the convergence of the iterates of a mapping $F\in\NB^d$ towards its unique positive eigenvector $\bu$. Such analysis is particularly interesting in applications as it naturally induces an algorithm for the computation of $\bu$ and $r_{\bb}(F)$. For example, this allows us to solve certain nonconvex optimization problems to global optimality \cite{gautier2016globally,tudisco2018core}, a hard task in general, or can be used to efficiently identify important components in networks with multiple layers \cite{frafra}. 

When $F$ is a strict contraction, convergence is a direct consequence of the Banach fixed point theorem, however when $F$ is nonexpansive we need stronger assumptions on $F$. For example, if $F\colon\R^2_+\to\R^2_+$ is the linear mapping $F(\bx)=M\bx$ with $M=\begin{psmallmatrix}0 & 1 \\ 1 & 0\end{psmallmatrix}$ then, 
although $M$ is irreducible, the iterates of $F$ will never converge towards its eigenvector. For the case $d=1$, it is proved in Theorem 2.3 \cite{Nussb} that the normalized iterates of a nonexpansive mapping $F\in\NB^1$ converge towards its positive eigenvector $\bu$ if $DF(\bu)$ is primitive. We prove in the following theorem that such a result can be extended for the case $d>1$. Furthermore, taking inspiration from the study of nonnegative multilinear forms (see e.g. \cite{Boyd,Chang_rect_eig,us,NQZ}), we show that each of the iterates induces two monotonic sequences which are particularly useful for the estimation of the spectral radius. These results are summarized in the following:
\begin{thm}\label{PM}
Let $F\in\NB^d$, $A=\A(F)$ and $\bb\in\Dn$. Suppose that $F$ has a positive eigenvector $\bu\in\Sn_{++}$ and define the sequence of normalized iterates given by $\bx^0\in\Sn_{++}$ and
\begin{equation*} 
\bx^{k}= \bigg(\frac{F_1(\bx^{k-1})}{\norm{F_1(\bx^{k-1})}_{\gamma_1}},\ldots,\frac{F_d(\bx^{k-1})}{\norm{F_d(\bx^{k-1})}_{\gamma_d}}\bigg) \qquad \forall k=1,2,\ldots
\end{equation*}
Then, $\lim_{k\to \infty} \bx^k=\bu$ if either $\rho(A)<1$ or $A^\top\bb = \bb$, $F$ is differentiable at $\bu$ and $DF(\bu)$ is primitive. Furthermore, if $A^\top \bb\leq \bb$, then 
\begin{equation*}
\widehat{\alpha}_k\ \leq \ \widehat{\alpha}_{k+1}\ \leq \ r_{\bb}(F)\ \leq \ \widecheck{\alpha}_{k+1}\ \leq \ \widecheck{\alpha}_{k} \qquad \forall k=0,1,2,\ldots
\end{equation*}
where 
$\widehat{\alpha}_k = \prod_{i=1}^d \mathfrak{m}_i\big(F(\bx^k)\big/\bx^k\big)^{b_i},$ $ \widecheck{\alpha}_k = \prod_{i=1}^d \mathfrak{M}_i\big(F(\bx^k)\big/\bx^k\big)^{b_i}$. Finally, if $A\bb <\bb$, then $\rho(A)<1$ and the following bound on the convergence rate holds
\begin{equation*}
\mu_{\bb}(\bx^k,\bu)\leq \rho(A)^{k}\,\frac{\mu_{\bb}(\bx^0,\bu)}{1-\rho(A)} \qquad \forall k\in\N.
\end{equation*}
\end{thm}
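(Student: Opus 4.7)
The plan is to introduce the normalization map $G\colon\Sn_{++}\to\Sn_{++}$ defined by $G(\bx)=(\|F_1(\bx)\|_{\gamma_1},\ldots,\|F_d(\bx)\|_{\gamma_d})^{-I}\krog F(\bx)$, so that $\bx^k=G^k(\bx^0)$. Since $F(\bu)=\blam\krog\bu$ and $\bu\in\Sn_{++}$, one has $\|F_i(\bu)\|_{\gamma_i}=\lambda_i$, hence $G(\bu)=\bu$. A short check will show that the coordinate-independent normalizing factors cancel inside every Hilbert ratio, giving $\mu_{\bc}(G(\bx),G(\by))=\mu_{\bc}(F(\bx),F(\by))$ for every $\bc\in\R^d_{++}$. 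In the strict-contraction case $\rho(A)<1$, Remark \ref{newrmq} supplies $\bc\in\R^d_{++}$ and $r\in[\rho(A),1)$ with $A^\top\bc\leq r\bc$, so Lemma \ref{contract} makes $G$ an $r$-contraction on the complete metric space $(\Sn_{++},\mu_{\bc})$; Banach's fixed point theorem combined with the uniqueness in Theorem \ref{Banachcombi} then gives $\bx^k\to\bu$.

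The main obstacle is the non-expansive case $A^\top\bb=\bb$ with $DF(\bu)$ primitive, which will require a multi-homogeneous adaptation of the proof of Theorem 2.3 in \cite{Nussb}. The plan is as follows. By Lemma \ref{contract}, $\mu_{\bb}(\bx^{k+1},\bu)\leq\mu_{\bb}(\bx^k,\bu)$, so this sequence decreases to some limit $\ell\geq 0$; since Hilbert balls in $\Sn_{++}$ are compact, the $\omega$-limit set of $(\bx^k)$ is non-empty, forward-invariant under $G$, and contained in the level set $\{\bz\in\Sn_{++}:\mu_{\bb}(\bz,\bu)=\ell\}$. After replacing $F$ by $\blam^{-I}\krog F$ I may assume $F(\bu)=\bu$; Lemma \ref{difflem} then supplies a positive vector $\tilde{\bu}=\tilde{\bb}\krog\bu$ fixed by $DF(\bu)$ together with an explicit formula for $DG(\bu)$. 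The hard part will be to show that primitivity of $DF(\bu)$ forces the spectrum of $DG(\bu)$ restricted to the tangent subspace of $\Sn_{++}$ at $\bu$ to lie strictly inside the open unit disk, making $\bu$ a local attractor of $G$; combining this with global non-expansiveness on the $\omega$-limit set will force $\ell=0$ and hence $\bx^k\to\bu$.

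For the monotone bounds, I would set $\bmk^k,\bMk^k\in\R^d_{++}$ coordinatewise by $\mk_i^k=\mathfrak{m}_i(F(\bx^k)/\bx^k)$ and $\Mk_i^k=\mathfrak{M}_i(F(\bx^k)/\bx^k)$, so that $\bmk^k\krog\bx^k\lek F(\bx^k)\lek\bMk^k\krog\bx^k$. Applying the order-preserving $F$ and using multi-homogeneity yields $(\bmk^k)^A\krog F(\bx^k)\lek F^2(\bx^k)\lek(\bMk^k)^A\krog F(\bx^k)$. Setting $\bphi^k=(\|F_1(\bx^k)\|_{\gamma_1},\ldots,\|F_d(\bx^k)\|_{\gamma_d})$ and using $F(\bx^{k+1})=(\bphi^k)^{-A}\krog F^2(\bx^k)$, a direct computation will give
\begin{equation*}
\mathfrak{m}_i(F(\bx^{k+1})/\bx^{k+1})\ \geq\ \phi^k_i\,[(\bmk^k/\bphi^k)^A]_i.
\end{equation*}
Raising to the $b_i$-th power and taking the product over $i$ (using $\prod_i(\bal^A)_i^{b_i}=\prod_i\alpha_i^{(A^\top\bb)_i}$) produces $\widehat{\alpha}_{k+1}\geq\widehat{\alpha}_k\prod_i(\phi^k_i/\mk^k_i)^{b_i-(A^\top\bb)_i}$. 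The hypothesis $A^\top\bb\leq\bb$ makes the exponents non-negative, and monotonicity of $\|\cdot\|_{\gamma_i}$ applied to $F_i(\bx^k)\geq\mk^k_i\bx^k_i$ gives $\phi^k_i\geq\mk^k_i$, so each factor is $\geq 1$. The symmetric computation yields $\widecheck{\alpha}_{k+1}\leq\widecheck{\alpha}_k$, and the enclosing inequalities $\widehat{\alpha}_k\leq r_{\bb}(F)\leq\widecheck{\alpha}_k$ will follow directly from the Collatz-Wielandt formula in Theorem \ref{CW_intro}.

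Finally, for the rate bound under $A\bb<\bb$, iterating this inequality gives $A^k\bb\leq r^k\bb$ for some $r<1$, hence $\rho(A)<1$ by a standard Perron-Frobenius comparison. The plan is to apply Lemma \ref{contract} with a weight vector (constructed from $\bb$ and a left Perron eigenvector of $A$) for which the induced Lipschitz constant equals $\rho(A)$; iterating the resulting inequality and using the triangle estimate $\mu_{\bb}(\bx^k,\bu)\leq\sum_{j\geq k}\mu_{\bb}(\bx^j,\bx^{j+1})$ together with the geometric decay of consecutive distances will yield the stated bound with $(1-\rho(A))^{-1}$ prefactor.
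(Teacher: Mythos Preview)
Your treatment of the strict-contraction case and of the monotone bounds $\widehat\alpha_k,\widecheck\alpha_k$ is correct and matches the paper almost verbatim: the paper packages the monotonicity computation as Lemma \ref{monothm}, and the contraction case is exactly the Banach argument from the proof of Theorem \ref{Banachcombi}.

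The substantive divergence is in the non-expansive case $A^\top\bb=\bb$. You propose a linearization argument: show that the spectrum of $DG(\bu)$ on the tangent space to $\Sn_{++}$ lies in the open unit disk, conclude that $\bu$ is a local attractor, and then combine local attractivity with global non-expansiveness to force the $\omega$-limit set to be $\{\bu\}$. The paper does \emph{not} do this. Instead it proves Theorem \ref{conv1} by a purely order-theoretic argument: after reducing to $F(\bu)=\bu$, it introduces the scalar sequence $\xi_k=\prod_i\mathfrak{m}_i(\hat F^k(\bx)/\bu)^{b_i}$, shows it is monotone and hence constant on the $\omega$-limit set, and then invokes Lemma \ref{lightlemma} (which says that primitivity of $DF(\bu)$ forces $F^\nu(\bu)<F^\nu(\bx)$ whenever $\bu\lneq\bx$) to derive a strict increase of this constant unless every $\omega$-limit point is of the form $\bal\krog\bu$; primitivity of $A$ then collapses the limit set to a single point.

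Your route is natural but has two unfulfilled steps. First, ``the hard part'': in the multi-homogeneous setting the tangent space at $\bu$ has codimension $d$, not $1$, and the $d$-dimensional subspace $\{\bal\krog\bu:\bal\in\R^d\}$ is $DF(\bu)$-invariant with restriction $A$; you would have to show that, after projecting via $DG(\bu)$, the remaining $d-1$ eigenvalues of $A$ (all of modulus $<1$ since $A$ is primitive) together with the rest of the spectrum of $DF(\bu)$ all land strictly inside the disk. This is doable but is real work you have not indicated. Second, and more seriously, the passage ``local attractor $+$ global non-expansiveness $\Rightarrow$ $\ell=0$'' is not automatic: the $\omega$-limit set sits on the level set $\{\mu_{\bb}(\cdot,\bu)=\ell\}$, which need not intersect the basin of attraction of $\bu$, and non-expansiveness alone gives no mechanism to pull it in. The paper's approach via Lemma \ref{lightlemma} sidesteps both issues by never leaving the ordered-cone setting, and is what the cited Corollary 6.5.8 of \cite{NB} actually does for $d=1$.

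For the rate bound, your plan to switch to a Perron weight $\bc$ and then transfer back to $\mu_{\bb}$ via a triangle/geometric-series estimate does not obviously produce the exact constant $(1-\rho(A))^{-1}$ --- the change of metric costs a factor depending on $\bc/\bb$, not on $\rho(A)$. The paper's own proof here is terse (``follows from the Banach fixed point theorem''), so this part is minor by comparison with the non-expansive case.
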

The proof of this theorem requires a number of preliminary results which we gather, together with the proof of the theorem itself, in the next final subsection.

\subsection{Convergence analysis}
First, we need the subsequent lemma which can be proved in the same way as Lemma 6.5.7 \cite{NB}, dealing with the case $d=1$.
\begin{lem}\label{lightlemma}
Let $F\in\NB^d$ and $\bu\in\kone_{++}$ with $F(\bu)=\bu$. If $F$ is differentiable at $\bu$ and $\nu$ is a positive integer such that $DF(\bu)^{\nu}$ has strictly positive entries, then 
$F^{\nu}(\bu)< F^{\nu}(\bx)$ for all $\bx\in\kone_{++}$ with $\bu\lekk\bx.$
\end{lem}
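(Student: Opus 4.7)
The plan is to combine a first-order expansion of $F^{\nu}$ at the fixed point $\bu$ with the order-preserving structure of the iterates. The key observation is that even though $F$ is only assumed differentiable at the single point $\bu$, the fact $F(\bu)=\bu$ means all intermediate iterates land back at $\bu$, so the chain rule applied $\nu$ times gives $F^{\nu}$ differentiable at $\bu$ with
\[
DF^{\nu}(\bu) \;=\; DF(\bu)^{\nu}.
\]
By assumption this $\nu$-th power has strictly positive entries, which means it maps any nonzero element of $\kone_{+}$ into $\kone_{++}$.

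First I would write $\bz := \bx-\bu \in \kone_{+}\saufzero$ and set $\bw := DF(\bu)^{\nu}\bz$, so that $\bw\in\kone_{++}$ by the observation above. Then I would invoke the first-order Taylor expansion at $\bu$,
\[
F^{\nu}(\bu+t\bz) \;=\; \bu + t\,\bw + t\,\|\bz\|\,\epsilon(t\bz), \qquad \epsilon(\bv)\to 0 \text{ as } \bv\to 0,
\]
valid for all $t\in[0,1]$ (with $\bu+t\bz\in\kone_{++}$ since $\bu\in\kone_{++}$ and $\bz\in\kone_{+}$). Since $\bw$ is an interior point of $\kone_{+}$ and $\kone_{++}$ is open, I can pick $t_0\in(0,1]$ small enough that $\bw+\|\bz\|\,\epsilon(t_0\bz)\in\kone_{++}$, which gives the local strict inequality
\[
F^{\nu}(\bu+t_0\bz) - \bu \;\in\; \kone_{++}.
\]

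Finally, monotonicity bridges to the given $\bx$. Because $t_0\leq 1$ and $\bz\in\kone_{+}$, we have $\bu+t_0\bz \lek \bx$, and since $F\in\NB^d$ is order-preserving so is $F^{\nu}$; thus $F^{\nu}(\bu+t_0\bz) \lek F^{\nu}(\bx)$. Combining,
\[
F^{\nu}(\bx) - F^{\nu}(\bu) \;=\; \bigl(F^{\nu}(\bx) - F^{\nu}(\bu+t_0\bz)\bigr) \,+\, \bigl(F^{\nu}(\bu+t_0\bz) - \bu\bigr) \;\in\; \kone_{+} + \kone_{++} \;\subset\; \kone_{++},
\]
which is exactly $F^{\nu}(\bu) < F^{\nu}(\bx)$ in the cone order.

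The main obstacle is almost entirely in Step~1: being careful that one really can iterate the chain rule with differentiability assumed only at $\bu$. Once that identity is in hand, the rest is a clean packaging of a linear Perron-type fact (strictly positive matrices send $\kone_{+}\saufzero$ into $\kone_{++}$) with the monotonicity of $F$, and the passage from "$t_0$ small" to arbitrary $\bx$ is automatic since $\kone_{+} + \kone_{++} \subset \kone_{++}$.
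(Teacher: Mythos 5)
Your proof is correct and follows the standard argument: by the chain rule at the fixed point $\bu$, $F^{\nu}$ is differentiable there with $DF^{\nu}(\bu)=DF(\bu)^{\nu}$; the strict positivity of $DF(\bu)^{\nu}$ forces the first-order term to lie in $\kone_{++}$, the Taylor remainder is absorbed for $t_0$ small because $\kone_{++}$ is open, and monotonicity of $F^{\nu}$ (inherited from $F$) transports the strict inequality from $\bu+t_0\bz$ up to $\bx$ via $\kone_{+}+\kone_{++}\subset\kone_{++}$. The paper itself defers to Lemma 6.5.7 of Lemmens--Nussbaum rather than spelling this out, but that proof (and the analogous expansion used in the paper's own proof of Theorem \ref{rad<}) rests on exactly the same Taylor-expansion-plus-monotonicity scheme, so your write-up is a faithful and complete rendition of the intended argument.
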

We recall known results of fixed point theory:
For $\bx\in\kone_{++}$ and $F\in\NB^d$, the orbit $\O(F,\bx)$ of $\bx$ under $F$ is defined as $\O(F,\bx)=\big\{ F^{k}(\bx)\mid k\in\N\big\} 
$. Furthermore, the $\omega$-limit set $\omega(F,\bx)$ of $\bx$ under $F$ is the set of accumulation points of $\O(F,\bx)$. For $F\in\NB^d$, Theorem 3.1.7 and Lemmas 3.1.2, 3.1.3 and 3.1.6 in \cite{NB} imply the following:
\begin{enumerate}[(I)]
\item If $F$ is nonexpansive with respect to the weighted Thompson metric $\bar\mu_{\bb}$ on $\kone_{++}$ and there exists $\bu\in\kone_{++}$ such that $\big(F^{k}(\bu)\big)_{k=1}^{\infty}\subset\kone_{++}$ has a bounded subsequence, then $\O(F,\bx)$ is bounded for each $\bx\in \kone_{++}$.\label{fixp1}
\item If $\bx\in\kone_{++}$ is such that $\O(F,\bx)$ has a compact closure, then $\omega(F,\bx)$ is a nonempty compact set and $F\big(\omega(F,\bx)\big)\subset\omega(F,\bx)$.\label{fixp2}
\item If $\bx\in\kone_{++}$ is such that $\O(F,\bx)$ has a compact closure and $|\omega(F,\bx)|=p$, then there exists $\bz\in\kone_{++}$ such that $\lim_{k\to\infty}F^{pk}(\bx)=\bz$ and $\omega(F,\bx)=\O(F,\bz)$.\label{fixp3}
\item If $F$ is nonexpansive with respect to $\bar\mu_{\bb}$, then for all $\bx\in\kone_{++}$ and $\by\in\omega(F,\bx)$, we have that $\omega(F,\by)=\omega(F,\bx)$.\label{fixp4}
\end{enumerate}
Property \eqref{fixp1} is a direct consequence of Calka's Theorem \cite{AleksanderCalka1984}. We are now ready to prove the following theorem which is a special case of Corollary 6.5.8 in \cite{NB} when $d=1$.
\begin{thm}\label{conv1}
Let $F\in\NB^d$, $\pmi{0}\in\Sn_{++}$ and $A=\A(F)$. Suppose that $\rho(A)=1$ and there exist $(\blam,\bu)\in\R^d_{++}\times\Sn_{++}$ such that $F(\bu)=\blam\krog\bu$. If $F$ is differentiable at $\bu$ and $DF(\bu)$ is primitive, then $\bu$ is the unique eigenvector of $F$ in $\Sn_{++}$ and the sequence $(\pmi{k})_{k=0}^{\infty}\subset\Sn_{++}$ defined in Theorem \ref{PM} satisfies $\lim_{k\to\infty} \pmi{k} = \bu$.
\end{thm}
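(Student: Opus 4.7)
The plan is to recast the problem in terms of the normalized map
\begin{equation*}
G(\bx)=\big(\norm{F_1(\bx)}_{\gamma_1},\ldots,\norm{F_d(\bx)}_{\gamma_d}\big)^{-I}\krog F(\bx),
\end{equation*}
so that $\bx^k=G^k(\bx^0)$ and fixed points of $G$ in $\Sn_{++}$ are exactly the eigenvectors of $F$ in $\Sn_{++}$. Since $DF(\bu)$ is primitive it is irreducible, and by the characterization noted after Example~\ref{irrrrrr_ex}, the homogeneity matrix $A=\A(F)$ is therefore irreducible. Combined with $\rho(A)=1$, the linear Perron--Frobenius theorem yields a unique $\bb\in\Dn$ with $A^\top\bb=\bb$, and Lemma~\ref{contract} together with the scale invariance of $\mu_{\bb}$ shows that $G$ is non-expansive on the complete metric space $(\Sn_{++},\mu_{\bb})$.

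The heart of the argument is a \emph{strict} contraction statement: there exists $\nu\in\N$ such that $\mu_{\bb}(G^{\nu}(\bz),\bu)<\mu_{\bb}(\bz,\bu)$ for every $\bz\in\Sn_{++}\setminus\{\bu\}$. To apply Lemma~\ref{lightlemma}, which needs an honest fixed point, I would replace $F$ by $\tilde F=\blam^{-I}\krog F\in\NB^d$: this belongs to $\NB^d$ with $\A(\tilde F)=A$, induces the same normalized map $G$, satisfies $\tilde F(\bu)=\bu$, and has $D\tilde F(\bu)$ obtained from $DF(\bu)$ by positive row scaling. The latter preserves the sign pattern of every iterate, so $D\tilde F(\bu)^{\nu}\in\R^{n\times n}_{++}$ for some $\nu$. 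Setting $\bm=\Bmin{\bz}{\bu}$ and $\vc M=\Bmax{\bz}{\bu}$, the assumption $\bz,\bu\in\Sn_{++}$ with $\bz\neq\bu$ forces $\bu\lekk \bm^{-I}\krog\bz$. Lemma~\ref{lightlemma} applied to $\tilde F$ then yields the componentwise strict inequalities $\tilde F^{\nu}(\bu)\lekkk \bm^{-A^{\nu}}\krog\tilde F^{\nu}(\bz)$ and $\tilde F^{\nu}(\bz)\lekkk \vc M^{A^{\nu}}\krog\tilde F^{\nu}(\bu)$. Taking the componentwise min, respectively max, over the indices $j_i$ preserves these strict inequalities (a finite minimum of strictly larger quantities is strictly larger), and the weighted logarithmic sum over $i$ with weights $b_i$ collapses through $(A^{\nu})^\top\bb=\bb$ to $\mu_{\bb}(\tilde F^{\nu}(\bz),\tilde F^{\nu}(\bu))<\mu_{\bb}(\bz,\bu)$. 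Scale invariance of $\mu_{\bb}$ finally identifies this with $\mu_{\bb}(G^{\nu}(\bz),\bu)<\mu_{\bb}(\bz,\bu)$.

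Uniqueness of $\bu$ in $\Sn_{++}$ is then immediate, as any other fixed point of $G$ would violate the strict contraction applied to itself. For convergence, non-expansiveness of $G$ together with the fixed point $\bu$ forces $\O(G,\bx^0)$ to be bounded in $\mu_{\bb}$, hence relatively compact in the norm topology. Properties (I)--(IV) recalled just before the theorem give that $\omega(G,\bx^0)$ is non-empty, compact and $G$-invariant. The non-increasing sequence $k\mapsto\mu_{\bb}(G^{k}(\bx^0),\bu)$ converges to some $c\geq 0$; every $\bz\in\omega(G,\bx^0)$ satisfies $\mu_{\bb}(\bz,\bu)=c$, and $G$-invariance places $G^{\nu}(\bz)$ back in $\omega(G,\bx^0)$, so $\mu_{\bb}(G^{\nu}(\bz),\bu)=c$ as well. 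The strict contraction forces $\bz=\bu$, so $\omega(G,\bx^0)=\{\bu\}$ and $\bx^k\to\bu$.

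The main obstacle I anticipate is the strict contraction step: one must carefully verify that the componentwise strict inequalities delivered by Lemma~\ref{lightlemma} propagate intact through the finite min and max operations appearing in $\mu_{\bb}$, and that the two sides of the resulting weighted logarithmic sum match exactly. The identity $(A^{\nu})^\top\bb=\bb$, hence irreducibility of $A$ on top of $\rho(A)=1$, is critical for this collapsing step, whereas everything else reduces to standard fixed-point theory on $(\Sn_{++},\mu_{\bb})$.
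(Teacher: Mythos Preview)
Your argument is correct and takes a genuinely different route from the paper's. The paper works with the unnormalized map $\hat F=\blam^{-I}\krog F$ on $\kone_{++}$ under the Thompson metric $\bar\mu_{\bb}$: it introduces the monotone functional $\xi_k=\prod_i\mathfrak m_i(\hat F^k(\bx)/\bu)^{b_i}$, uses Lemma~\ref{lightlemma} to rule out any $\omega$-limit point not of the form $\bal\krog\bu$, and then invokes the convergence $A^k\to\t\bb\bb^\top/\ps{\t\bb}{\bb}$ (primitivity of $A$) to absorb the remaining scaling ambiguity; uniqueness is obtained separately via Theorem~\ref{unique}. Your approach works directly with the normalized map $G$ on $(\Sn_{++},\mu_{\bb})$, upgrades non-expansiveness to the strict drop $\mu_{\bb}(G^{\nu}(\bz),\bu)<\mu_{\bb}(\bz,\bu)$ for $\bz\neq\bu$, and then both uniqueness and convergence fall out of a standard $\omega$-limit argument. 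This is shorter, avoids the $\xi_k$-functional and the limit of $A^k$, and does not need to appeal to Theorem~\ref{unique}. The paper's route, on the other hand, keeps the analysis on $\kone_{++}$ and makes the role of primitivity of $A$ explicit through $\lim_k A^k$.

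One small point: Lemma~\ref{lightlemma} is stated only for $\bu\lekk\bx$, so it directly yields $\tilde F^{\nu}(\bu)\lekkk\bm^{-A^{\nu}}\krog\tilde F^{\nu}(\bz)$ but not the second strict inequality you claim. This does not matter for your proof: the non-strict bound $\tilde F^{\nu}(\bz)\lek\vc M^{A^{\nu}}\krog\bu$ (from order preservation alone) combined with the strict lower bound already gives, after taking $b_i$-weighted logarithms and using $(A^{\nu})^\top\bb=\bb$, the strict inequality $\mu_{\bb}(G^{\nu}(\bz),\bu)<\mu_{\bb}(\bz,\bu)$. You may want to phrase it that way.
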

\begin{proof}
First, note that the primitivity of $DF(\bu)$ implies that of $A$ by Lemma \ref{Eulerthm}. Hence, by Theorem \ref{unique}, $\bu$ is the unique positive eigenvector of $F$. Furthermore, there exist $\bb,\t\bb\in\Dn$ and $\nu\in\N$ such that $A^\top\bb=\bb$, $A\t\bb=\t\bb$ and $DF(\bu)^{\nu}>0$. Now, let $\blam\in\R^d_{++}$ with $F(\bu)=\blam\krog\bu$ and $\hat F\in\NB^d$ defined as $\hat F(\bx)=(\lambda_1^{-1},\dots,\lambda_d^{-1})\krog F(\bx)$. Then $\A(\hat F)=A$, $\bu$ is the unique eigenvector of $\hat F$, $\hat F$ is differentiable at $\bu$ and $D\hat F(\bu)^{\nu}>0$. 
 We show that for every $\bx\in\kone_{++}$, there exists $\bal\in\R^d_{++}$ such that $\omega(\hat F,\bx)=\{\bal\krog\bu\}$. Let $\bx\in\S^{\bphi}_{++}$ and consider the sequence
 $\xi_k= \prod_{i=1}^d\mathfrak{m}_i(F^k(\bx)\big/\bu)^{b_i}$. Then, we have 
 \begin{equation*}
 \prod_{i=1}^d\mathfrak{M}_i\big(\bx\big/{\bu}\big)^{b_i} \geq \xi_{k+1} \geq\prod_{i=1}^d\mathfrak{m}_i\big({F\big(\mathfrak{m}({F^{k}(\bx)}/{\bu})\krog\bu\big)}\big/{\bu}\big)^{b_i} =\xi_k
  \end{equation*}
 which implies that the sequence $(\xi_k)_{k=1}^\infty$ converges towards some $\xi>0$ as it is monotonic and bounded. In particular, it holds $\xi = \prod_{l=1}^d \bmini{l}{\bz}{\bu}^{b_l}$ for every $\bz\in\omega(\hat F,\bx)$.
Now, by Lemma \ref{contract}, we know that $\hat F$ is nonexpansive with respect to the weighted Thompson metric $\bar\mu_{\bb}$ on $\kone_{++}$. Since $\hat F(\bub)=\bub$, we have $\hat F^{k}(\bub)=\bub$ for every $k\in\N$ and thus \eqref{fixp1} implies that $\O(\hat F,\bxb)$ is bounded. Now, let $\nu\in\N$ be such that $DF(\bu)^{\nu}>0$. It follows from \eqref{fixp2}, that $\hat F^{\nu}\big(\omega(\hat  F,\bxb)\big)\subset\omega(\hat F,\bx)$ and thus $\hat F^{\nu}(\bz)\in\omega(\hat F,\bx)$ for every $\bz\in\omega(\hat F,\bx)$. Now, let $\bz\in\omega(\hat F,\bx)$ and suppose by contradiction that $\bz\neq\bbe\krog\bu$ for every $\bbe\in\R^d_{++}$. Then $\Bmin{\bz}{\bu}\krog\bu\lekk \bz$ and, with Lemma \ref{lightlemma}, we get 
$\Bmin{\bz}{\bu}^{A^\nu}\krog \hat F^{\nu}(\bu) = \hat F^{\nu}(\Bmin{\bz}{\bu}\krog\bu)\lekkk \hat F^{\nu}(\bz).$
Thus, with $\xi = \prod_{l=1}^d \bmini{l}{\bz}{\bu}^{b_l}$ and $\hat F^{\nu}(\bu)=\bu$, we obtain the contradiction
\begin{equation*}
\xi =  \prod_{l=1}^d\bmini{l}{\bz}{\bu}^{b_l}\bmini{l}{\hat F^{\nu}(\bu)}{\bu}^{b_l}< \prod_{l=1}^d\bmini{l}{\hat F^{\nu}(\bz)}{\bu}^{b_l}=\xi.
\end{equation*}
Hence, there exists $\bal\in\R^d_{++}$ such that $\bz=\bal\krog\bub$ and \eqref{fixp4} implies that $\omega(\hat F,\bx)=\omega(\hat F,\bal\krog\bu)$. As $A$ is primitive, we know from Theorem 1.1 \cite{Francesco} that it holds $
\lim_{k\to\infty}A^k = B$ with $B=\ps{\t\bb}{\bb}^{-1}\t\bb\bb^\top$.
In particular, we have
\begin{equation*}
\lim_{k\to\infty} \hat F^k(\bal\krog\bu)=\lim_{k\to\infty} \bal^{A^k}\krog \hat F^k(\bu)=\lim_{k\to\infty} \bal^{A^k}\krog \bu=\bxi^{B}\krog\bu.
\end{equation*}
Hence, we have $\omega(\hat F,\bx)=\omega(\hat F,\bal\krog\bu)=\{\bal^B\krog\bu\}$. So, $\lim_{k\to\infty}\hat F^{k}(\bxb)=\bal^B\krog\bub$ follows from \eqref{fixp3}. To conclude the proof, note that for every $ \by\in\kone_{++}$ and $i\in[d]$ it holds $\norm{\hat F_i(\by)}_{\gamma_i}^{-1}\hat F_i(\by)=\norm{F_i(\by)}_{\gamma_i}^{-1}F_i(\by)$ and
thus $\lim_{k\to\infty} \bx^k=\bu$.
\end{proof}
The following lemma generalizes Proposition 28 in \cite{us}. It implies the monotonicity of the sequence $(\widehat \alpha_k)_{k=1}^\infty$ and  $(\widecheck \alpha_k)_{k=1}^\infty$. 
\renewcommand{\NF}{\t G}
\begin{lem}\label{monothm}
Let $F\in\NB^d$ and $(\blam,\bub)\in\R^d_{++}\times\Sn_{++}$ be such that $F(\bu)=\blam\krog\bu$. Let $\bb\in\Dn$ with $A^\top\bb\leq \bb$, consider the mapping $\t G\colon\Sn_{++}\to\Sn_{++}$ defined as $\t G(\bx)=\big(\norm{F_1(\bz)}_{\gamma_1}^{-1},\ldots,\norm{F_d(\bz)}_{\gamma_d}^{-1}\big)\krog F$ 
and let $\cwl$, $\cwu$ be as in \eqref{def_cwu}. Then, for every $\bx\in\Sn_{++}$, it holds
$\cwl(F,\bx) \leq \cwl(F,\t G(\bx)) \leq r_{\bb}(F)\leq\cwu(F,\t G(\bx)) \leq  \cwu(F,\bx).$
\end{lem}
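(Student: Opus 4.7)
The plan is to split the chain of inequalities into two logically independent pieces: (i) the outer monotonicity inequalities $\cwl(F,\bx)\leq\cwl(F,\t G(\bx))$ and $\cwu(F,\t G(\bx))\leq\cwu(F,\bx)$, which follow from order-preservation, multi-homogeneity, and monotonicity of the norms $\norm{\cdot}_{\gamma_i}$; and (ii) the middle Collatz--Wielandt inequalities $\cwl(F,\t G(\bx))\leq r_{\bb}(F)\leq\cwu(F,\t G(\bx))$, which will be obtained by adapting Case~2 of the proof of Theorem \ref{CW_intro} so that the positive eigenpair $(\blam,\bu)$, supplied by hypothesis, replaces the contraction assumption $\rho(A)<1$.

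For (i), write $\kappa_i=\bmaxi{i}{F(\bx)}{\bx}$ and $\beta_i=\norm{F_i(\bx)}_{\gamma_i}$, so that $F(\bx)\lek \boldsymbol{\kappa}\krog\bx$ and $\t G(\bx)=\bbe^{-I}\krog F(\bx)$. Applying $F$ to the componentwise bound and using multi-homogeneity to rewrite $F(\t G(\bx))=\bbe^{-A}\krog F^2(\bx)$ yields an estimate of the shape $\bmaxi{i}{F(\t G(\bx))}{\t G(\bx)}\leq\beta_i\prod_{k=1}^d(\kappa_k/\beta_k)^{A_{i,k}}$. Taking the $\bb$-weighted geometric product over $i$, the identity $\sum_i A_{i,k}b_i=(A^\top\bb)_k$ collapses this into
\[
\cwu(F,\t G(\bx))\;\leq\;\prod_{k=1}^d\kappa_k^{(A^\top\bb)_k}\prod_{i=1}^d\beta_i^{b_i-(A^\top\bb)_i}.
\]
The hypothesis $A^\top\bb\leq\bb$ makes each exponent $b_i-(A^\top\bb)_i$ nonnegative, while monotonicity of $\norm{\cdot}_{\gamma_i}$ applied to $F_i(\bx)\lek\kappa_i\bx_i$ together with $\norm{\bx_i}_{\gamma_i}=1$ forces $\beta_i\leq\kappa_i$. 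Replacing $\beta_i$ by $\kappa_i$ inside these nonnegative exponents reduces the right-hand side to $\prod_k\kappa_k^{b_k}=\cwu(F,\bx)$. The lower monotonicity $\cwl(F,\bx)\leq\cwl(F,\t G(\bx))$ follows by the mirror argument, replacing $\boldsymbol{\kappa}$ with $\omega_i=\bmini{i}{F(\bx)}{\bx}$ and using the corresponding bound $\omega_i\leq\beta_i$.

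For (ii), given any $\by\in\Sn_+$ I define $\theta_i=\min\{u_{i,j_i}/y_{i,j_i}: y_{i,j_i}>0\}$; monotonicity of $\norm{\cdot}_{\gamma_i}$ applied to $\bt\krog\by\lek\bu$ together with $\norm{\bu_i}_{\gamma_i}=1$ gives $\bt\leq\ones$, so $\boldsymbol{\Theta}=\bt^{-I}\geq\ones$ and $\by\lek\boldsymbol{\Theta}\krog\bu$. Order-preservation and multi-homogeneity yield $F(\by)\lek(\blam\circ\boldsymbol{\Theta}^{A})\krog\bu$, and the same exponent bookkeeping as in (i) produces
\[
\cwl(F,\by)\;\leq\;\Big(\prod_{i=1}^d\lambda_i^{b_i}\Big)\prod_{k=1}^d\Theta_k^{(A^\top\bb)_k-b_k}\;\leq\;\prod_{i=1}^d\lambda_i^{b_i},
\]
where the final step uses $A^\top\bb\leq\bb$ and $\Theta_k\geq 1$. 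Applied to any eigenvector of $F$ in $\Sn_+$, this proves $r_{\bb}(F)\leq\prod_i\lambda_i^{b_i}$, and since $\bu$ itself realises this value we conclude $r_{\bb}(F)=\prod_i\lambda_i^{b_i}$. Specialising to $\by=\t G(\bx)\in\Sn_{++}$ yields $\cwl(F,\t G(\bx))\leq r_{\bb}(F)$. The companion inequality $r_{\bb}(F)\leq\cwu(F,\t G(\bx))$ follows from the dual construction: for $\by\in\Sn_{++}$, replace $\bt$ by the vector with entries $\bmini{i}{\by}{\bu}$ to get $\Bmin{\by}{\bu}\krog\bu\lek\by$, apply order-preservation and multi-homogeneity, and perform the analogous exponent computation to conclude $\cwu(F,\by)\geq\prod_i\lambda_i^{b_i}$.

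The main obstacle is purely notational: correctly tracking how the weighted exponent $A^\top\bb$ arises from taking $\bb$-weighted geometric products of multi-homogeneous quantities, and then turning the sign condition $A^\top\bb\leq\bb$ into usable inequalities by pairing it with the auxiliary orderings $\omega_i\leq\beta_i\leq\kappa_i$ (for part (i)) and $\theta_i\leq 1\leq\Theta_i$ (for part (ii)). Once this bookkeeping is set up, every step is an immediate consequence of the three defining properties of $\NB^d$: order-preservation, multi-homogeneity, and monotonicity of the reference norms.
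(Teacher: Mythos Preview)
Your proof is correct and follows essentially the same route as the paper. For the outer inequalities you perform the same computation as the paper (the paper writes it for $\cwl$ and you for $\cwu$, but the bookkeeping with $\bs=\bb-A^\top\bb$ and the bound $\omega_i\leq\beta_i\leq\kappa_i$ coming from monotonicity of the norms is identical); for the middle inequalities the paper simply invokes Theorem \ref{CW_intro}, whereas you replay the relevant portion of that proof directly using the positive eigenpair $(\blam,\bu)$ from the hypothesis.

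One small point worth noting: your choice in part~(ii) is actually the more careful one. The lemma only assumes $A^\top\bb\leq\bb$, which does not by itself force either $A^\top\bb=\bb$ or $\rho(A)<1$ (take $A=\diag(1,1/2)$), so Theorem~\ref{CW_intro} does not literally apply as stated. By running the Case~2 argument of Theorem~\ref{CW_intro} with the given positive eigenvector in place of the one produced there by Theorem~\ref{Banachcombi}, you cover the lemma's stated generality without this gap. This is a minor refinement rather than a different method, since the computation is the same.
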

\begin{proof}
Let $\bx\in\Sn_{++}$, then $\mathfrak{m}(\NF(\bx)\big/\bx)\leq \ones$ because $\t G(\bx)\in\Sn_{++}$. Thus, with $\bs = \bb-A^\top\bb \in\R^d_{+}$, we have
$1 \leq  \prod_{i=1}^d\mathfrak{m}_i(\NF(\bx)\big/\bx)^{-\s_i}$.
It follows that
\begin{align*}
\cwl(F,\t G(\bx))  
&\geq \prod_{i=1}^d\norm{F(\bx)}_{\gamma_i}^{s_i}\bmini{i}{F(\Bmin{F(\bx)}{\bx}\krog\bx)}{F(\bx)}^{b_i} \\
& =\prod_{i=1}^d\norm{F(\bx)}_{\gamma_i}^{s_i}\bmini{i}{F(\bx)}{\bx}^{-s_i}\bmini{i}{F(\bx)}{\bx}^{b_i} \\
&=\prod_{i=1}^d\mathfrak{m}_i\big({\t G(\bx)}\big/{\bx}\big)^{-s_i}\bmini{i}{F(\bx)}{\bx}^{b_i} \geq \cwl(F,\bx).
\end{align*}
The inequality $\cwu(F,\t G(\bx))\leq \cwu(F,\bx)$ can be proved in the same way by swapping the inequalities and exchanging the roles of $\mathfrak{m}$ and $\mathfrak{M}$. The end of the proof follows from Theorem \ref{CW_intro}.
\end{proof}
\begin{proof}[Proof of Theorem \ref{PM}]
Let $\t G$ be defined as in Lemma \ref{monothm}. If $\rho(A)<1$, then by the proof of Theorem \ref{Banachcombi}, $\t G$ is a strict contraction with respect to $\mu_{\bb}$. In particular, $\lim_{k\to \infty}\bx^k=\bu$ and the linear convergence rate follows from the Banach fixed point theorem (see Theorem 3.1 \cite{pointfixe}). If $\rho(A)=1$, then $\lim_{k\to \infty}\bx^k=\bu$ follows from Theorem \ref{conv1}. Finally, if $\cwl$, $\cwu$ are defined as in Section \ref{CW_M_U}, then $\widehat \alpha_k=\cwl(F(\bx^k),\bx^k)$ and  $\widecheck \alpha_k=\cwu(F(\bx^k),\bx^k)$. Hence, the monotonicity of these sequences follows form Lemma \ref{monothm} and $\lim_{k\to \infty} \widehat \alpha_k=\lim_{k\to \infty} \widecheck \alpha_k=r_{\bb}(F)$ follows from the continuity of $\cwl,\cwu$.
\end{proof}

\section*{Acknowledgments}\addcontentsline{toc}{section}{Acknowledgments}
We are grateful to Shmuel Friedland and Lek-Heng Lim for a number of insightful discussions and for pointing out relevant references. We would also like to thank three anonymous referees for their careful reading of the manuscript and their very useful comments that largely improved the quality of the final manuscript.


%
\end{document}